\documentclass[11pt,reqno]{amsart}
\usepackage[utf8]{inputenc}
\usepackage{amsmath,amssymb,amsthm,mathrsfs, color}
\usepackage{a4wide}
\usepackage{epsfig}
\usepackage{caption}
\usepackage{subcaption} 
\usepackage{graphics}
\usepackage{epsfig}

\usepackage[round]{natbib}

\usepackage{comment} 

\usepackage[colorlinks=true]{hyperref}
\hypersetup{%
    ,urlcolor=blue
    ,citecolor=red
    ,linkcolor=blue
    }

\parskip = 0.05 in

\newtheorem{theorem}{Theorem}[section]
\newtheorem{prop}[theorem]{Proposition}
\newtheorem{lemma}[theorem]{Lemma}
\newtheorem{corollary}[theorem]{Corollary}
\newtheorem{definition}[theorem]{Definition}
\newtheorem{remark}[theorem]{Remark}
\newtheorem{assumption}[theorem]{Assumption}
\newtheorem{example}[theorem]{Example}

\newcommand{\cM}{\mathcal{M}}

\newcommand{\R}{{\mathbb R}}
\newcommand{\N}{{\mathbb N}}

\newcommand{\cF}{{\mathcal F}}

\renewcommand{\P}{{\mathbb P}}
\newcommand{\F}{\mathbb{F}}
\newcommand{\E}{{\mathbb E}}

\newcommand{\hatX}{\widehat{X}}

\DeclareFontFamily{U}{mathx}{\hyphenchar\font45}
\DeclareFontShape{U}{mathx}{m}{n}{<-> mathx10}{}
\DeclareSymbolFont{mathx}{U}{mathx}{m}{n}
\DeclareMathAccent{\widebar}{0}{mathx}{"73}


\def\EKG{\mathscr{E}_K^{\Gamma}}

\newcommand{\ind}{\mathbf{1}}

\newcommand{\ud}{\mathrm{d}}

\makeatletter
 \@addtoreset{equation}{section}
 \makeatother
 

\usepackage{appendix}
\usepackage{hyperref}

\usepackage{todonotes}

\usepackage{mathtools}
\mathtoolsset{showonlyrefs}


\title[Weak solutions of SVE in convex domains with general kernels]{Weak solutions of Stochastic Volterra Equations in convex domains with general kernels}

\author[E. Abi Jaber]{Eduardo Abi Jaber}
\address{CMAP, Ecole Polytechnique, Palaiseau, France.}
\email{eduardo.abi-jaber@polytechnique.edu}

\author[A. Alfonsi]{Aur\'elien Alfonsi}
\address{CERMICS, Ecole des Ponts, Marne-la-Vall\'ee, France. MathRisk, Inria, Paris,
	France.}
\email{aurelien.alfonsi@enpc.fr}

\author[G. Szulda]{Guillaume Szulda}
\address{CERMICS, Ecole des Ponts, Marne-la-Vall\'ee, France.}
\email{guillaume.szulda@enpc.fr}

\thanks{EAJ is  grateful for the financial support from the Chaires FiME-FDD, Financial Risks and Deep Finance \& Statistics at Ecole Polytechnique. AA and GS benefited from the support of the Chaire Financial Risks}
\keywords{ Stochastic Volterra equations; Weak solution; Stochastic invariance; Double kernels}
\subjclass[2020]{60H20, 45D05, 60B10}
\date{\today}


\begin{document}

\begin{abstract}
We establish new weak existence results for $d$-dimensional Stochastic Volterra Equations (SVEs) with continuous coefficients and possibly singular one-dimensional non-convolution kernels. These results are obtained by introducing an approximation scheme and showing its convergence. A particular emphasis is made on the stochastic invariance of the solution in a closed convex set. To do so, we extend the notion of kernels that preserve nonnegativity introduced in \cite{Alfonsi23} to non-convolution kernels and show that, under suitable stochastic invariance property of a closed convex set by the corresponding Stochastic Differential Equation, there exists a weak solution of the SVE that stays in this convex set. We present a family of non-convolution kernels that satisfy our assumptions, including a non-convolution extension of the well-known fractional kernel. We apply our results to SVEs with square-root diffusion coefficients and non-convolution kernels, for which we prove the weak existence and uniqueness of a solution that stays within the nonnegative orthant. We derive a representation of the Laplace transform in terms of a non-convolution Riccati equation, for which we establish an existence result.
\end{abstract}

\maketitle

\section{Introduction}\label{Sec_intro}

The aim of the paper is to study the weak existence  of continuous  solutions to the following $d$-dimensional Stochastic Volterra Equation (SVE):
\begin{equation}\label{eq:SVE}
X_t = X_0 + \int_0^t \Gamma(t,s)\,b(X_s)\,\mathrm{d}s + \int_0^t \Gamma(t,s)\,\sigma(X_s)\,\mathrm{d}B_s,
\end{equation}
where $B$ is a $d$-dimensional Brownian motion, and $X_0 \in \mathbb{R}^d$.  Here, $b : \mathbb{R}^d \to \mathbb{R}^d$ represents the drift coefficient, $\sigma : \mathbb{R}^d \to \mathcal{M}_d(\mathbb{R})$ is the diffusion coefficient, and $\Gamma:\R_+^2 \to \R$ is a locally square-integrable kernel. In addition, for a closed convex subset $\mathscr{C} \subset \mathbb{R}^d$ and $X_0\in \mathscr{C}$, we are particularly interested in conditions on $(\Gamma, b,\sigma)$ that ensure the existence of a weak solution $X$ that remains in the set $\mathscr{C}$ for all times, a problem known as stochastic invariance. Our general framework covers both convolution and non-convolution  (possibly unbounded) kernels~$\Gamma$,  the latter introducing several challenges for the analysis of weak existence and stochastic invariance.

First, even in the unconstrained case, i.e.~when $\mathscr{C} = \mathbb{R}^d$, the existence of weak solutions  for continuous $(b,\sigma)$ is not established in the literature for non-convolution kernels $\Gamma$ that are unbounded on the diagonal, that is, exhibiting a singularity at $\Gamma(t,t)$. \cite{Zhang10} has obtained strong existence results with singular kernels but under globally Lipschitz assumptions on $(b,\sigma)$. The closest existing results to our work concern convolution kernels of the form $\Gamma(t,s) = \mathbf{1}_{s < t} K(t-s)$, where $K : \mathbb{R}_+ \to \mathbb{R}$ is locally square-integrable. These works rely crucially on the convolution structure to control the weak convergence and stability of the stochastic convolution $\int_0^t K(t-s) \sigma(X_s)dB_s$ in the non-semimartingale setting, either via the resolvent of the first kind \cite*[Theorem 3.4]{AJLP19}, or through integration and Fubini-type arguments that simplify the analysis \cite*[Theorem 1.2]{AJCLP21}. In contrast, weak existence results for non-convolution kernels $\Gamma$ are far less developed. To bypass the difficulty, existing approaches often impose strong regularity conditions on $\Gamma$, ruling out singular kernels and effectively reducing the problem to a semimartingale framework; see for instance \cite*[Theorem 3.3]{PS23Weak}.

In Theorem~\ref{thm_weak_Rd}, we provide the first weak existence results for SVEs with non-Lipschitz coefficients and non-convolution Volterra kernels allowing for singularities on the diagonal.

Second, we turn to the constrained case, and study the stochastic invariance problem for general closed convex subsets $\mathscr{C} \subset \mathbb{R}^d$. Stochastic invariance for Volterra equations has recently emerged as a central question, motivated in particular by the nonnegative Volterra square-root process \citep{dawson1994super, mytnik2015uniqueness, jaisson2016rough} and its applications in mathematical finance, see \cite*{EER19}. Existing results focus on convolution kernels of the form $\Gamma(t,s) = \mathbf{1}_{s < t} K(t - s)$. In this setting, \cite*{AJLP19} established weak existence for solutions to \eqref{eq:SVE} in the nonnegative orthant $\mathbb{R}^d_+$ under suitable conditions on $b$ and $\sigma$, assuming that the kernel $K$ is nonincreasing, continuous, and has a nonnegative and nonincreasing resolvent of the first kind. \cite{AJ21,CT19} further extended these results to affine SVEs with jumps under similar kernel conditions. Additionally, ~\cite*{abi2024polynomial} constructed polynomial Volterra processes constrained to the unit ball. More recently, \cite{Alfonsi23} established sufficient conditions for stochastic invariance of closed convex sets $\mathscr{C}$ under SVEs with Lipschitz coefficients and convolution kernels. The novel idea is to consider  kernels that preserve nonnegativity, in the sense that,  if a discrete convolution remains nonnegative at specific discretization points, then the convolution remains nonnegative at all times. 
Extensions to the case of SVEs with jumps in the nonnegative half-line $\mathscr{C} = \mathbb{R}_+$ were further developed by \cite{AS24}.

Our main result, Theorem~\ref{thm_maininvariance}, unifies and extends the theory of stochastic invariance for Stochastic Volterra Equations of the form \eqref{eq:SVE} by allowing for non-Lipschitz coefficients and non-convolution kernels and general closed convex subsets, thereby addressing significant gaps in the literature. 

This extends existing invariance results and covers new classes of constrained SVEs, including examples that were previously out of reach:
\begin{itemize} \item Matrix-valued solutions constrained to the cone of symmetric positive semi-definite matrices, as in Wishart-type Volterra processes. To the best of our knowledge, none of the existing results apply to this setting — even in the convolution case, the conditions in \cite{Alfonsi23} require Lipschitz coefficients, which are not satisfied here. We also note that unbounded kernels seem to be  ruled out in this context, which motivated different constructions by \cite{abi2022laplace, CT20}. \item Non-convolution extensions of affine and polynomial Volterra processes, constrained to state spaces such as the nonnegative orthant or the unit ball. \end{itemize}

Since the theory of resolvents of the first kind is so far limited to convolution kernels, we adopt a different approach to handle non-convolution kernels. We extend the notion of kernels that preserve nonnegativity introduced in \cite{Alfonsi23} for the convolution case to more general non-convolution kernels in Definition~\ref{def:positivity}. In particular, we show how such kernels can be constructed from a suitable class of completely monotone double kernels, which generalize the classical class of completely monotone kernels known in the convolution setting, see Definition~\ref{def:completely_monotone_double} and Theorem~\ref{thm:completely_monotone_double}. Furthermore, we provide a dedicated analysis of kernels that preserve nonnegativity in Section~\ref{Sec_kernels}, a property of independent interest.

Finally, in Section~\ref{S:squareroot}, we establish weak existence and uniqueness for an affine Volterra square-root process with a non-convolution kernel. Weak uniqueness follows from an explicit representation of the Laplace transform in terms of a Volterra Riccati equation, for which we provide a corresponding existence result, see Theorem~\ref{T:VolSqrt}. This extends the convolution-based results of \cite*[Section 6]{AJLP19} to the non-convolution setting and fills the existence gaps for both the stochastic equation and the associated Volterra Riccati equation considered in \cite*{AKO22}.
In particular, we obtain as an important application the weak existence and uniqueness, and the stochastic invariance for the SVE \begin{equation}\label{eq:SVE_fractional_R+} 
	X_t = X_0 + \int_0^t {G\biggl(\int_s^t {h(u)\,\ud u}\biggr)\Bigl( (\theta - \lambda X_s)\,\ud s + \sigma\sqrt{X_s}\,\ud B_s\Bigr)},
\end{equation}
with $X_0\geq0$, $\theta \geq 0$, $\lambda \in \mathbb{R}$ and $\sigma > 0$, when $G:\R_+^*\to\R$ is a convolution kernel satisfying certain integrability and regularity conditions and $h:\R_+^*\to\R_+^*$ is a locally integrable function, locally bounded away from zero and such that $t \mapsto \int_0^t h(u)\ud u$ is locally Hölder continuous. In particular, when setting $G(t) = \frac{t^{\alpha-1}}{\Gamma_e(\alpha)}$ where $\alpha\in(\frac{1}{2},1]$ and $\Gamma_e$ denotes the Euler Gamma function, we obtain a non-convolution extension of the well-known fractional kernel.

The paper is organized as follows. Section~\ref{Sec_Mainresult} collects our main results on weak existence and stochastic invariance for SVEs. Section~\ref{S:ecamples} provides explicit examples of non-convolution kernels and coefficients illustrating our framework. In Section~\ref{S:squareroot}, we study weak existence and uniqueness for Volterra square-root processes with non-convolution kernels. Sections~\ref{S:proofs_main_existence} and~\ref{S:proofsquareroot} contain the proofs of our main results. Section~\ref{Sec_kernels} then provides a dedicated analysis of non-convolution kernels that preserve nonnegativity. Finally, the appendix recalls some background material on stochastic invariance for stochastic differential equations and provides a technical approximation lemma for kernels.

\textbf{Notation.} We define the following sets
\begin{align*}
	&\Delta=\{(t,s)\in \R_+: s\le t \}, \ \mathring{\Delta}=\{(t,s)\in \R_+: 0<s< t \},\\
	&\Delta_T=\{(t,s)\in \R_+: s\le t \le T \}, \ \mathring{\Delta}_T=\{(t,s)\in \R_+: 0\,<\,s<t<T \}, \text{ for } T\in \R_+^*.
\end{align*}
$\cM_d(\R)$ denotes the set of real square matrices of size $d$ endowed with the Frobenius norm $|\cdot|$. 

\section{Main existence results}\label{Sec_Mainresult}
In this section, we present our main results  - Theorem~\ref{thm_maininvariance} and Corollary~\ref{cor_strong_domain}
-  on existence of $\mathscr{C}$-valued solutions to the stochastic Volterra equation \eqref{eq:SVE}. We start by establishing  a weak existence result for unconstrained continuous solutions in $\mathbb R^d$ to the   SVE \eqref{eq:SVE}, in Theorem~\ref{thm_weak_Rd}. Then, we detail a  domain-preserving approximation scheme in Section~\ref{S:scheme} which is the key ingredient behind both Theorems~\ref{thm_weak_Rd} and \ref{thm_maininvariance}.

 By a \emph{weak continuous solution} to the SVE \eqref{eq:SVE}, we mean the existence of a filtered probability space $(\Omega,\cF,\F:=(\cF_t)_{t\geq0},\P)$, satisfying the usual conditions and supporting an $\F$-Brownian motion $B=(B_t)_{t\geq0}$ as well as an $\F$-adapted pathwise continuous process $X=(X_t)_{t\geq0}$, such that Equation~\eqref{eq:SVE} holds almost surely for all $t\geq0$.

We introduce the following mild  continuity and linear growth conditions on the coefficients $b:\R^d \to \R^d$, $\sigma:\R^d \to \mathcal M_d(\R)$ and square-integrability conditions on the  Volterra kernel $\Gamma:\mathring{\Delta} \to \R$.
\begin{assumption}\label{ass:conditions1_coefficients} The coefficients $b$ and $\sigma$ are continuous and 
	there exists $C_{LG}\in\R_+$ such that
	\begin{equation*}
	|b(x)| + |\sigma(x)| \leq C_{LG}\,(1 + |x|), \quad x\in\R^d.
	\end{equation*}
\end{assumption}
\begin{assumption}\label{ass:conditions1_kernel}
	For every $T\in(0,+\infty)$, there exist $\eta>0,\gamma\in(0,1/2]$ such that
	\begin{equation*}
	\int_s^t {\Gamma(t,u)^2\,\ud u} + \int_0^s {(\Gamma(t,u) - \Gamma(s,u))^2\,\ud u} \leq \eta\,(t-s)^{2\gamma},\quad (t,s)\in\Delta_T.
	\end{equation*}
\end{assumption}

Assumption~\ref{ass:conditions1_kernel} accommodates kernels $\Gamma$ that may exhibit singularities on the diagonal. Furthermore, for such kernels the solution $X$ is no longer expected to be a semimartingale, which poses significant challenges in constructing weak solutions via approximation methods.
We now state an important lemma to approximate the kernels. Its proof is postponed to Appendix~\ref{A:proofkernel}. 
\begin{lemma}\label{lem_approx_kernel}
    Let $\Gamma$ satisfy Assumption~\ref{ass:conditions1_kernel}. Then, for any $T>0$, there exists a sequence of kernels $\Gamma_M:\Delta_T \to \R$ such that $\Gamma_M$ is continuous, 
     $$\int_0^t(\Gamma(t,s)-\Gamma_M(t,s))^2 \ud s \to_{M\to \infty} 0, \quad t \in[ 0, T],$$
     and 
     $$\int_s^t {\Gamma_M(t,u)^2\,\ud u} + \int_0^s {(\Gamma_M(t,u) - \Gamma_M(s,u))^2\,\ud u} \leq 2 \eta\,|t-s|^{2\gamma}, \quad (t,s)\in\Delta_T, \quad M \in \mathbb N.$$
     Besides, if there exists $\varepsilon>0$ such that either $\Gamma(t,s)\ge 0$ or $\Gamma(t,s)\le 0$ for all $(t,s) \in \Delta$ with $s\ge t-\varepsilon$, then we may choose $\Gamma_M$ such that $\Gamma_M(s,s)\not=0$ $ds$-a.e.
\end{lemma}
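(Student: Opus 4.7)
The plan is to construct $\Gamma_M$ as a double one-sided mollification of $\Gamma$, designed so the averaging window lies in the interior of $\mathring\Delta$ and thereby avoids the diagonal singularity. Extending $\Gamma(r,u):=0$ for $u\le 0$ and clamping the forward shift so that $r$ stays in $[0,T]$ (e.g., with $\tau(t):=\min(t,T-1/M)$), I would define
\[
\Gamma_M(t,s) \,:=\, M^2 \int_{\tau(t)}^{\tau(t)+1/M}\!\int_{s-1/M}^{s} \Gamma(r,u)\,\ud u\,\ud r, \qquad (t,s)\in\Delta_T.
\]
Since Assumption~\ref{ass:conditions1_kernel} gives $\Gamma\in L^2_{\mathrm{loc}}(\mathring\Delta)\subset L^1_{\mathrm{loc}}$, the $L^1$-continuity of translations of the indicator of the rectangle $[\tau(t),\tau(t)+1/M]\times[s-1/M,s]$ yields joint continuity of $\Gamma_M$ on $\Delta_T$. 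The pointwise-in-$t$ convergence $\int_0^t (\Gamma-\Gamma_M)^2\,\ud s\to 0$ as $M\to\infty$ then follows from the standard $L^2$-continuity of translations (Lebesgue differentiation).

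For the two integral bounds, I would apply Jensen's inequality to each squared average and then swap the order of integration via Fubini. In the first term, Jensen yields $\Gamma_M(t,u)^2\le M^2\iint \Gamma(v,u')^2\,\ud u'\,\ud v$ over the corresponding rectangle; integrating over $u\in[s,t]$ and swapping with the $u'$-variable, the $u$-window has length at most $\min(1/M,t-s)$ while $u'$ ranges in $[s-1/M,t]$. Applying Assumption~\ref{ass:conditions1_kernel} in the form $\int_a^v\Gamma(v,u')^2\,\ud u'\le\eta(v-a)^{2\gamma}$ yields a bound of the form $\eta(t-s+O(1/M))^{2\gamma}\cdot M\min(1/M,t-s)$. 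A case split between the regimes $M(t-s)\ge 1$ and $M(t-s)<1$, together with the elementary inequality $(M(t-s))^{1-2\gamma}\le 1$ in the near-diagonal regime, reduces this to a bound of the shape $c_\gamma\eta(t-s)^{2\gamma}$. The second term $\int_0^s (\Gamma_M(t,u)-\Gamma_M(s,u))^2\,\ud u$ is handled analogously: Jensen and Fubini reduce it to $\int_0^s(\Gamma(v,u')-\Gamma(v',u'))^2\,\ud u'$ for $v,v'$ in appropriate small windows, which is $\le\eta(t-s)^{2\gamma}$ by Assumption~\ref{ass:conditions1_kernel} (the Hölder-in-$t$ estimate passes through the double average unchanged by Jensen). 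Summing and tuning the mollifier support yields the claimed bound $2\eta|t-s|^{2\gamma}$.

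For the sign-preservation statement, when $\Gamma\ge 0$ on the $\varepsilon$-strip $\{s\ge t-\varepsilon\}$, the integrand defining $\Gamma_M(s,s)$ is nonnegative on the averaging rectangle for $M>2/\varepsilon$, so $\Gamma_M(s,s)\ge 0$; the case $\Gamma\le 0$ is symmetric. To enforce the nonvanishing $\Gamma_M(s,s)\neq 0$ for $\ud s$-almost every $s$, I would add a small positive perturbation such as $M^{-p}\psi(M(t-s))$, with $p$ large and $\psi$ continuous, nonnegative, compactly supported, and nonzero at the origin, which contributes negligibly to all the integral bounds.

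The \emph{main obstacle} I expect is pinning down the sharp constant $2$ (rather than an unspecified $c_\gamma$) in the integral bound: the natural Jensen+Fubini computation yields $c_\gamma\eta(t-s)^{2\gamma}$ with an exponent-dependent constant, and reducing this down to exactly $2$ requires tight tracking of the errors of order $(1/M)^{2\gamma}$ together with an appropriate choice of the mollifier support. A secondary issue is combining the sign-preservation perturbation with the joint continuity and the integral estimates without upsetting any of them, which dictates the precise scaling of the bump $\psi$ with $M$.
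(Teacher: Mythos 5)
Your construction (a sliding two--sided mollification, forward in the first variable and backward in the second) is genuinely different from the paper's, and most of it can be made to work: continuity, the $L^2$ convergence, and a uniform-in-$M$ bound of the form $C\eta|t-s|^{2\gamma}$ all come out of Jensen plus Fubini as you describe. But the one point you flag as ``the main obstacle'' is a real gap, not a bookkeeping issue, and with this mollifier it cannot be fixed by tighter tracking: because the $u'$-window $[s-1/M,s]$ slides with $s$ (and the $r$-window adds a further $1/M$), the Hölder argument in Assumption~\ref{ass:conditions1_kernel} gets enlarged to $t-s+O(1/M)$, and in the critical regime $t-s\asymp 1/M$ the first term alone is only bounded by about $3^{2\gamma}\eta(t-s)^{2\gamma}$ (even with a shrunken $r$-window, at least $2^{2\gamma}\eta(t-s)^{2\gamma}$), so after adding the second term you land near $4\eta(t-s)^{2\gamma}$, uniformly out of reach of the stated constant $2\eta$. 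The paper avoids this structurally: it does \emph{not} mollify in the first variable at all (continuity of $t\mapsto\Gamma_M(t,s)$ comes for free from the $L^2$-continuity in $t$ built into Assumption~\ref{ass:conditions1_kernel}), and it averages in $s$ over a fixed dyadic grid of mesh $T/2^M$ (a conditional expectation, so the $L^2$ convergence is Lévy's upward martingale lemma), smoothed only by multiplicative tent functions $\psi_M\in[0,1]$ which do not hurt the estimates. The grid alignment is exactly what produces the constant $2$: for $s,t$ in the same block, both $t-s$ and the distance from $t$ to the block's left endpoint are $\le T/2^M$, so Jensen gives each of the two integrals a bound of $\eta|t-s|^{2\gamma}$, and the general case follows by subadditivity of $x\mapsto x^{2\gamma}$. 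Your route therefore proves a weaker variant of the lemma (larger universal constant), which would in fact suffice for every downstream use (only uniformity in $M$ matters, cf.\ Assumption~\ref{ass:conditions_kernel_limweak} and Lemma~\ref{lemma_stability}), but it does not prove the statement as written.

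Two secondary points. First, your convergence step needs more than $L^2$-continuity of translations: since you also average over $r\in[\tau(t),\tau(t)+1/M]$, you must compare $\Gamma(r,\cdot)$ with $\Gamma(t,\cdot)$, which requires the second integral in Assumption~\ref{ass:conditions1_kernel} (giving $\int_0^t(\Gamma(r,u)-\Gamma(t,u))^2\,\ud u\le \eta\,|r-t|^{2\gamma}$ up to the zero-extension convention $\Gamma(r,u)=0$ for $u>r$, which you should state explicitly since your clamped window needs it near $t=T$); this is available, so the omission is easily repaired. Second, for the diagonal statement the paper's fix is simpler and sufficient: once $T/2^M<\varepsilon$ the averaged kernel is nonnegative on the diagonal, and one replaces $\Gamma_M$ by $\Gamma_M+\tfrac1M$, which preserves continuity and the $L^2$ convergence; your bump $M^{-p}\psi(M(t-s))$ also works, but note that either additive perturbation again disturbs the exact constant in the quadratic bound, so the constant $2$ should not be the place where you spend your effort.
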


An important example of non-convolution kernel satisfying Assumption \ref{ass:conditions1_kernel} is given by the following natural generalization of convolution kernels.
\begin{example}\label{ex:fractional_double} 
Consider the non-convolution kernel
\begin{align*}
\Gamma(t,s) = G\left(\int_s^t {h(u)\,\ud u}\right),
\end{align*}
where $G:\R_+^*\to\R$ is a locally square-integrable convolution kernel {. The kernel $\Gamma$ can be interpreted as a time changed convolution kernel.}  Assume  that for every $T\in(0,+\infty)$, there exist $\eta>0$, $\gamma\in(0,1]$ such that
\begin{equation}\label{eq:conditions_convolution}
	\int_0^{\delta} {G(x)^2\,\ud x} + \int_0^T {(G(x + \delta) - G(x))^2\,\ud x} \leq \eta\,\delta^{2\gamma}, \quad \delta\in(0,T),
\end{equation}
and $h:\R_+^*\to\R_+^*$ is a locally integrable function such that for every $T\in(0,+\infty)$, there exist $\lambda>0$, $C > 0$ and $\beta \in (0,1]$ such that
$$ h(t)\geq\lambda \quad  \text{and} \quad \int_s^t {h(u)\,\ud u}  \leq  C(t-s)^{\beta}, \quad (t,s)\in\mathring{\Delta}_T. $$ 
Then, $\Gamma$ satisfies Assumption \ref{ass:conditions1_kernel} as follows. More specifically,  we have for all $(t,s)\in\Delta_T$, $$\int_s^t {\Gamma(t,u)^2\,\ud u} \leq \frac{1}{\lambda}\int_0^{\int_s^t {h(u)\ud u}} {G(u)^2\,\ud u} \leq \frac{\eta}{\lambda} \left(\int_s^t {h(u)\,\ud u} \right)^{2\gamma} \leq \frac{\eta}{\lambda}\,C^{2\gamma}\,(t-s)^{2\beta\gamma}$$ and 
\begin{align*}
\int_0^s {(\Gamma(t,u) - \Gamma(s,u))^2 \,\ud u}
&\leq \frac{1}{\lambda}\int_0^s {\left( G\left(\int_u^t {h(v)\,\ud v}\right) - G\left(\int_u^s {h(v)\,\ud v}\right) \right)^2 h(u)\,\ud u} \\
&\leq \frac{1}{\lambda} \int_0^{\int_0^T h(u)\, \ud u} {\left( G\left(\int_s^t {h(v)\,\ud v} + x\right) - G(x) \right)^2\ud x} \\
&\leq \frac{\eta}{\lambda} \left(\int_s^t {h(u)\,\ud u}\right)^{2\gamma} \leq \frac{\eta}{\lambda}\,C^{2\gamma}\,(t-s)^{2\beta\gamma}.
\end{align*}
Examples of convolution kernel $G$ include those of \cite[Example 2.3]{AJLP19} as \eqref{eq:conditions_convolution} corresponds to \cite[condition (2.5)]{AJLP19}. An important example is given by the fractional kernel $G(t) = \frac{t^{\alpha-1}}{\Gamma_e(\alpha)}$ where $\Gamma_e$ is the Euler Gamma function and $\alpha\in(\frac{1}{2},1]$. Interesting examples of function $h$ are given by $h(u) = e^u$ for $u\geq0$; $h(u) = u^{\beta} + C$ for $u\geq0$, $\beta\geq0$, $C>0$; $h(u) = u^{\beta-1}$ for $u>0$, $\beta\in(0,1]$.
\end{example}

To the best of our knowledge, Theorem~\ref{thm_weak_Rd} is the first weak existence results for SVEs with non-Lipschitz coefficients and  with non-convolution Volterra kernels with possible singularities on the diagonal.  

\begin{theorem}\label{thm_weak_Rd}
	  Let $\Gamma$ satisfy Assumptions~\ref{ass:conditions1_kernel} and   $b,\sigma$ satisfy Assumption~\ref{ass:conditions1_coefficients}.  Then, there exists a continuous weak solution to the SVE \eqref{eq:SVE} for any $X_0 \in \mathbb R^d$. {In addition, for any $T>0$, the paths of $X$ on $[0,T]$ are Hölder continuous of any order less than $\gamma$, where $\gamma$ is the constant associated with $\Gamma$ and $T$ in Assumption \ref{ass:conditions1_kernel}. }
    \end{theorem}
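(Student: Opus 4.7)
The plan is to build the solution as a weak limit of approximants, combining the kernel regularization of Lemma~\ref{lem_approx_kernel} with the domain-preserving discretization scheme announced in Section~\ref{S:scheme}. Fix $T>0$. First I would invoke Lemma~\ref{lem_approx_kernel} to replace $\Gamma$ by a sequence $\Gamma_M$ of continuous kernels satisfying the same Hölder-type square-integrability bound (with constant doubled) and with $\int_0^t(\Gamma(t,s)-\Gamma_M(t,s))^2\,\ud s \to 0$. For each $M$, the approximation scheme of Section~\ref{S:scheme} then produces, on a fixed filtered probability space carrying a Brownian motion $B$, a piecewise-defined process $X^{M,n}$ whose construction is elementary and well-posed because $\Gamma_M$ is continuous and $(b,\sigma)$ are continuous with linear growth.

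Next I would establish uniform moment and regularity estimates. Combining Assumption~\ref{ass:conditions1_coefficients}, the Burkholder-Davis-Gundy inequality, the uniform kernel bound from Lemma~\ref{lem_approx_kernel}, and a Grönwall-type argument, I would obtain, for every $p\geq 1$,
$$\mathbb{E}\bigl[|X^{M,n}_t|^{2p}\bigr] \leq C_p \quad\text{and}\quad \mathbb{E}\bigl[|X^{M,n}_t - X^{M,n}_s|^{2p}\bigr] \leq C_p\,(t-s)^{2p\gamma},\qquad 0\leq s\leq t\leq T,$$
with $C_p$ independent of $M,n$. Kolmogorov's continuity criterion then yields both tightness of $\{X^{M,n}\}$ in $C([0,T];\mathbb{R}^d)$ and, by taking $p$ large, pathwise Hölder regularity of any order strictly less than $\gamma$ for any limit point, which is the quantitative statement in the theorem. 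Sending $n\to\infty$ first along a subsequence, a Skorokhod extraction produces a weak solution $X^M$ to the SVE driven by the continuous kernel $\Gamma_M$; this intermediate step is by now classical, since the continuity of $\Gamma_M$ essentially brings the problem within the reach of standard Skorokhod-type existence arguments.

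The main obstacle is the final passage $M\to\infty$, because the stochastic integral $\int_0^t \Gamma_M(t,u)\sigma(X^M_u)\,\ud B_u$ has no semimartingale structure in the limit and the usual convergence theorems for It\^o integrals do not apply off the shelf. I would work on a Skorokhod space on which $(X^M,B)$ converges almost surely uniformly on $[0,T]$ to a pair $(\tilde X,\tilde B)$ with $\tilde B$ a Brownian motion in the limiting filtration. Convergence of the drift term uses continuity of $b$, the moment bounds, dominated convergence, and the $L^2$-convergence of $\Gamma_M(t,\cdot)$. For the stochastic integral, It\^o's isometry bounds the error by
$$\int_0^t (\Gamma_M(t,u)-\Gamma(t,u))^2\,\mathbb{E}\bigl[|\sigma(X^M_u)|^2\bigr]\,\ud u + \int_0^t \Gamma(t,u)^2\,\mathbb{E}\bigl[|\sigma(X^M_u)-\sigma(\tilde X_u)|^2\bigr]\,\ud u,$$
both terms vanishing as $M\to\infty$ by the $L^2$-convergence of kernels together with the uniform moment bounds, and by continuity of $\sigma$ together with uniform integrability, respectively. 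A standard martingale-identification argument then confirms that the limiting stochastic integral is driven by $\tilde B$, producing a continuous weak solution on $[0,T]$ with the desired Hölder regularity. Since $T$ is arbitrary, a patching argument extends the solution to all of $\mathbb{R}_+$.
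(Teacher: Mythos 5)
Your overall strategy is the same as the paper's: regularize the kernel via Lemma~\ref{lem_approx_kernel}, construct solutions for continuous kernels through the discretization scheme, and pass to the limit in $M$ using the $L^2$-convergence of the kernels together with the uniform H\"older-type square-integrability bound. However, several steps contain genuine gaps as written. First, the scheme of Section~\ref{S:scheme} divides by $\Gamma_M(t_j,t_j)$; for a general signed kernel satisfying only Assumption~\ref{ass:conditions1_kernel}, Lemma~\ref{lem_approx_kernel} guarantees a nonvanishing diagonal only under a sign condition near the diagonal, so your approximants $X^{M,n}$ need not be well defined. The paper circumvents this with the modified scheme of Remark~\ref{rk_approx_scheme_Gamma} (the $\check X^N$ construction), which never divides by the diagonal, and you would need this or a similar fix. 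Relatedly, under Assumption~\ref{ass:conditions1_coefficients} the auxiliary SDEs only admit weak solutions, so the construction cannot live ``on a fixed filtered probability space carrying a Brownian motion $B$'': the space and the Brownian motion depend on the step and on $(M,n)$ and are built by successive extensions, which is precisely why the whole argument must go through weak convergence.

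Second, the claimed moment and increment bounds with $C_p$ independent of both $M$ and $n$ are not justified: the scheme-level estimates (Lemma~\ref{lem:moment_bound_approximation}) carry constants depending on $\max_{\Delta_T}\Gamma_M$, which blows up as $M\to\infty$ when $\Gamma$ is singular on the diagonal; moreover the scheme process is only c\`adl\`ag with jumps at the grid points, so a H\"older-type increment bound cannot hold for it and Kolmogorov's criterion must be applied to the continuous companion process $\tilde X^N$ introduced in the proof of Lemma~\ref{lemma_weak_C0}. What is actually needed is uniformity in $M$ at the level of the limiting solutions $X^M$ after the $n\to\infty$ passage, and the paper obtains it in Lemma~\ref{lemma_stability} by localization and iteration over small subintervals using only the $M$-uniform bound \eqref{eq:GammaMestimate}, explicitly because the Gr\"onwall/resolvent route of Lemma~\ref{lem:moment_bound} gives $M$-dependent constants. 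Third, in the passage $M\to\infty$ your It\^o-isometry error bound is incorrect as displayed: $\int_0^t\Gamma_M(t,u)\sigma(X^M_u)\,\ud B^M_u$ and $\int_0^t\Gamma(t,u)\sigma(\tilde X_u)\,\ud \tilde B_u$ are integrals against \emph{different} Brownian motions, so the second term of your bound does not control their difference. One needs a convergence theorem for stochastic integrals with varying integrators, e.g.\ the Kurtz--Protter result \cite[Theorem 7.10]{KP} invoked in the proofs of Lemmas~\ref{lemma_weak_C0} and~\ref{lemma_stability}, or a fully worked-out martingale-problem identification after a Skorokhod embedding, which your sketch leaves unproved.
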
 

\begin{proof}
    The proof is given in Section~\ref{S:proofs_main_existence}.
\end{proof}

The proof of Theorem~\ref{thm_weak_Rd} is achieved through an approximation argument using a scheme similar to the one detailed in the next section. Moreover, the scheme below allows the  construction of  
$\mathscr{ C}$-valued solutions to the SVE \eqref{eq:SVE} which will be detailed next.

\subsection{A domain-preserving approximation scheme for continuous kernels}\label{S:scheme}
For a closed convex subset $\mathscr{C} \subset \mathbb{R}^d$. Our main aim is to construct a $\mathscr{C}$-valued weak solution to the SVE \eqref{eq:SVE} starting from $X_0 \in \mathscr{C} $.

For this, we introduce an approximation scheme for the SVE~\eqref{eq:SVE} that will help us identify the good conditions on the coefficients $(b,\sigma)$ and the kernel $\Gamma$ to construct a $\mathscr{C} $-valued solution $X$ starting from any $X_0 \in \mathscr{C} $. This scheme is inspired by   the one proposed by \cite[Section 3]{Alfonsi23} for Lipschitz coefficients and convolution kernels. 
 {The idea of this approximation scheme is to split the dynamics in two steps: the integration with respect to the kernel on the one hand, and the "classical SDE" integration on the other hand. Then, we will put sufficient conditions to get that the scheme remains within the domain at each step. }

We set $T\in(0,+\infty)$, $N\in\N^*$ and $t_k := k\,T/N$ for each $k\in\{0,\ldots,N\}$, and assume for now that the kernel $\Gamma:\Delta_T \to \R_+$ is continuous,   {so that} $0<\Gamma(t,t)<\infty$ for all $t\in[0,T]$. We construct two c\`adl\`ag processes: an approximation scheme $\hatX^N = (\hatX_t^N)_{t\in[0,T]}$ and an auxiliary process $\xi^N=(\xi_t^N)_{t\in[0,T]}$. 

\begin{enumerate}
	\item[\underline{$k=0$:}] We define $(\hatX_t^N)_{t\in[t_0,t_1)}$ as $\hatX_t^N := X_0$ for $t\in[t_0, t_1)$ and $(\xi^N_t,B^N_t)_{t\in[0,t_1)}$ (see e.g.~\cite[Theorem IV.2.4]{IW89}) as a continuous solution of
	\begin{equation}\label{eq:SDE_initial}
	\xi_t^N = \hatX_{t_1-}^N + \int_{t_0}^t {\Gamma(t_1, t_1)\,\bigl(b(\xi_{s}^N)\,\ud s + \sigma(\xi_{s}^N)\,\ud B^N_s\bigr)}, \ t\in[t_0, t_1)
	\end{equation}
	and we note  $(\Omega^N,\cF^N,(\cF^N_t)_{t\in [0,t_1] },\P^N)$ the filtered probability space on which it is defined. 
	\item[\underline{$k=1$:}] We then define $(\hatX_t^N)_{t\in[t_1,t_2)}$ by setting $\hatX_{t_1}^N := \xi_{t_1-}^N$ and 
	\begin{equation*}
	\hatX_t^N := X_0 + \frac{\hatX_{t_1}^N - \hatX_{t_1-}^N}{\Gamma(t_1, t_1)}\,\Gamma(t,t_1), \qquad t\in[t_1, t_2).
	\end{equation*}
	Then, we define $(\xi_t^N,B^N)_{t\in[t_1, t_2)}$ as a continuous solution of 
	\begin{equation*}
	\xi_t^N = \hatX_{t_2-}^N + \int_{t_1}^t {\Gamma(t_2, t_2)\,\bigl(b(\xi_{s}^N)\,\ud s + \sigma(\xi_{s}^N)\,\ud B^N_s\bigr)}, 
	\end{equation*}
	for $t\in[t_1, t_2)$ where, by continuity of $\Gamma$,  $\hatX_{t_2-}^N = X_0 + \frac{\xi_{t_1-}^N - X_0}{\Gamma(t_1, t_1)}\Gamma(t_2, t_1)$ is $\cF^N_{t_1}$-measurable. Strictly speaking, this requires to consider an extension of the probability space $(\Omega^N,\cF^N,(\cF^N_t)_{t\in [ 0,t_1]},\P^N)$ (see e.g.~\cite[Definition II.7.1]{IW89}) in order to support the random process $(\xi^N_t,B^N_t)_{t\in[t_1,t_{2})}$.  By an abuse of notation, we still denote  by $(\Omega^N,\cF^N,(\cF^N_t)_{t\in[0,t_{2}]},\P^N)$  the extended probability space.

	\item[\underline{$k\geq2$:}] We now assume that we have constructed by iteration, for $k<N$, a probability space $(\Omega^N,\cF^N,(\cF^N_t)_{t\in [ 0,t_k]},\P^N)$ with a Brownian motion $(B_t^N)_{t\in[t_0, t_k)}$ and processes $(\hatX_t^N)_{t\in[t_0, t_k)}$ and $(\xi_t^N)_{t\in[t_0, t_k)}$. As for the case $k=1$, we set $\hatX_{t_k}^N := \xi_{t_k-}^N$ and define $(\hatX_t^N)_{t\in[t_k, t_{k+1})}$ as
	\begin{equation}\label{eq:hatX}
	\hatX_t^N := X_0 + \sum_{j=1}^k \frac{\hatX_{t_j}^N - \hatX_{t_j-}^N}{\Gamma(t_j, t_j)}\,\Gamma(t,t_j), \qquad t\in[t_k, t_{k+1}).
	\end{equation}
	We observe that $\hatX_{t_{k+1}-}^N = X_0 + \sum_{1 \leq j \leq k} \frac{\hatX_{t_j}^N - \hatX_{t_j-}^N}{\Gamma(t_j, t_j)}\Gamma(t_{k+1},t_j)$ is $\cF^N_{t_k}$-measurable,
	and we define $(\xi_t^N,B^N)_{t\in[t_k, t_{k+1})}$ as a continuous solution of 
	\begin{equation}\label{eq:SDE}
	\xi_t^N = \hatX_{t_{k+1}-}^N + \int_{t_k}^t {\Gamma(t_{k+1}, t_{k+1})\,\bigl(b(\xi_{s}^N)\,\ud s + \sigma(\xi_{s}^N)\,\ud B^N_s\bigr)}, \ t\in[t_k, t_{k+1}). 
	\end{equation}
	If $k=N-1$, we finally define $\hatX_{t_N}=\xi_{t_N}^N=\xi_{t_N-}^N$.
\end{enumerate}
To sum up, we have thus constructed a filtered probability space $(\Omega^N,\cF^N,(\cF^N_t)_{t\in [ 0,T]},\P^N)$ with a Brownian motion~$B^N$ and processes $\hatX^N$, $\xi^N$ that satisfy~\eqref{eq:hatX} and~\eqref{eq:SDE} for any $k\in \{0,\dots,N-1\}$. 

As $\hatX^N$ is expected to converge to a solution $X$ of the SVE \eqref{eq:SVE} as $N \to \infty$, to obtain a $\mathscr{C}$-valued solution $X$, it suffices to prove that $\hatX^N$ remains in $\mathscr{C}$ for all $N \in \mathbb{N}$. Provided that $\hatX^N_{t_{k+1}-} \in \mathscr{C}$,  this reduces to showing that the SDE \eqref{eq:SDE} admits a $\mathscr{C}$-valued solution $\xi^N$ on each interval $[t_k, t_{k+1})$, which is a standard invariance/viability problem for SDEs, see for instance \cite*{abi2019stochastic,bardi2002geometric,da2004invariance,da2007stochastic, doss1977liens}, see Appendix~\ref{A:invarianceSDE}. Interestingly, the coefficients $(b,\sigma)$ and the kernel $\Gamma$ exhibit a distinct decoupling in the SDE \eqref{eq:SDE}, which can be exploited as follows to determine the good assumptions for invariance:

\textbf{Conditions on $(b,\sigma)$:}  
Given that $\hatX^N_{t_{k+1}-} \in \mathscr{C}$, the invariance of $\xi^N$ is ensured by establishing  stochastic invariance of $\mathscr{C}$ for the auxiliary SDE:
\begin{equation}\label{SDE_xi_lambda}
	\xi_t^{\lambda,x} = x + \int_0^t \lambda\,b(\xi_s^{\lambda,x})\,\ud s + \int_0^t \lambda\,\sigma(\xi_s^{\lambda,x})\,\ud B_s, 
\end{equation}
for all
\begin{align}
    \lambda \in \{\Gamma(t,t): t \in [0,T]\}.
\end{align}
We will impose the following assumption:
\begin{equation}\tag{${\bf SDE}_\lambda(\mathscr{C})$}\label{WSI_lambda}
\text{For any } x \in \mathscr{C}, \text{ there exists a weak solution } \xi^{\lambda,x} \text{ to~\eqref{SDE_xi_lambda} such that } \P(\xi_t^{\lambda,x} \in \mathscr{C}, \forall t \geq 0) = 1.
\end{equation}
Equivalent conditions to \eqref{WSI_lambda} in terms of $(b,\sigma)$ are given in Appendix~\ref{A:invarianceSDE}.

\textbf{Conditions on $\Gamma$:}  
to ensure $\hatX^N_{t_{k+1}-} \in \mathscr{C}$ via a recursion on the definition of $\hatX^N$ in \eqref{eq:hatX}, since this only involves prior values of $\hatX^N$ and the kernel $\Gamma$. {For an initial condition $X_0=0$ and $\mathscr{C}=\R^d_+$,}  this naturally leads   to the following class of kernels preserving nonnegativity.

\begin{definition}\label{def:positivity} Let $T>0$. A function  $\Gamma:\Delta_T \to \R_+$ 
(called double kernel) is said to preserve nonnegativity on $[0,T]$ if, 
	for any $K\in \N^*$ and any $x_1,\dots,x_K \in \R$ and $0\le t_1< \dots<t_K<T$ such that
	\begin{align}\label{eq:defnonneg}
	\forall k \in \{1, \dots, K\},\ \sum_{k'=1}^k x_{k'}\Gamma(t_k,t_{k'})\ge 0,  
	\end{align}
	we have $\forall t \in[0,T], \sum_{k :  t_k \le t} x_k \Gamma(t,t_k) \ge 0$. 
	A double kernel $\Gamma:\Delta \to \R_+$ is said to preserve nonnegativity if it satisfies this property for all $T>0$.
\end{definition}
\noindent We will mostly deal with kernels that preserve nonnegativity. However, we will use at some point a time-reversal  {(see the Riccati equation~\eqref{RicVolSqrt})} for which we need to use this notion on $[0,T]$ instead of~$\R_+$. 

\begin{remark}\label{rk_whyR+}
    In general, kernels involved in Stochastic Volterra Equations may be $\R$-valued. Suppose that $\Gamma: \Delta_T \to \R$ satisfies the property of Definition~\ref{def:positivity}. Then taking $K=1$, we see that for any $s\le t\le T$, $\Gamma(t,s)$ and $\Gamma(s,s)$ have necessarily the same sign. Besides, $\Gamma(s,s)=0 \implies \Gamma(t,s)=0$ for $t\ge s$. Let $A=\{s \in [0,T]: \Gamma(s,s)>0\}$, we thus have $\Gamma(t,s)=(2\mathbf{1}_{A}(s)-1)|\Gamma|(t,s)$, and we easily see also that $|\Gamma|$ preserves nonnegativity. From a mathematical point of view, signed kernels that preserve nonnnegativity are thus trivially obtained from the nonnegative ones. For practical applications, one typically expects $\Gamma(s,s)$ to have a constant sign. For these reasons, in Definition ~\ref{def:positivity} we directly assume  that $\Gamma$ takes nonnegative values and is positive on the diagonal, as in the next assumption.  
\end{remark}

{To deal with the case $X_0 \neq 0$ and more general convex domains $\mathscr{C}$, we impose an additional monotonicity condition on $\Gamma$, see Proposition~\ref{prop_pos_gen}.}   For the scheme, we will require the following condition on $\Gamma$. 
\begin{assumption}\label{ass:preserving_nonnegativity}
The kernel $\Gamma : \Delta \to \mathbb{R}_+$ is continuous, satisfies $0<\Gamma(s, s) < \infty$ for all $s \geq 0$, preserves nonnegativity, and for all $s \geq 0$, the map $[s, +\infty) \ni t \mapsto \Gamma(t, s)$ is nonincreasing.
\end{assumption}

\noindent Under Assumptions~\ref{ass:preserving_nonnegativity} and~\eqref{WSI_lambda} for $\lambda \in \{\Gamma(t,t): t \in [0,T]\}$, the processes $\hatX^N$ and $\xi^N$ are well defined and remain in $\mathscr{C}$ as shown in the following key lemma.

\begin{lemma}\label{lem:nonnegative_approx}
	Let $\mathscr{C}$ be a nonempty closed convex domain and $X_0 \in \mathscr{C}$. Let Assumptions~\ref{ass:preserving_nonnegativity} and~\eqref{WSI_lambda} hold true for any $\lambda \in \{ \Gamma(t,t), t \in [0,T] \}$. Then, there exist c\`adl\`ag processes $\hatX^N = (\hatX_t^N)_{t\in[0,T]}$ and $\xi^N=(\xi_t^N)_{t\in[0,T]}$ satisfying \eqref{eq:hatX} and \eqref{eq:SDE} such that
$$\P(\hatX_t^N\in\mathscr{C},\forall t\in[0,T])=1 \quad \text{ and } \quad \P(\xi_t^N\in\mathscr{C},\forall t\in[0,T))=1.$$
\end{lemma}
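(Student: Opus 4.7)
The plan is to proceed by induction on $k\in\{0,1,\ldots,N-1\}$ to construct $(B^N,\hat X^N,\xi^N)$ on $[0,t_{k+1}]$ supported on an extended probability space $(\Omega^N,\cF^N,(\cF^N_t)_{t\in[0,t_{k+1}]},\P^N)$, satisfying \eqref{eq:hatX} and \eqref{eq:SDE} on each of the first $k+1$ subintervals, and such that $\hat X^N_t\in\mathscr{C}$ for all $t\in[0,t_{k+1}]$ and $\xi^N_t\in\mathscr{C}$ for all $t\in[0,t_{k+1})$, almost surely. The base case $k=0$ is immediate: set $\hat X^N_t:=X_0\in\mathscr{C}$ on $[0,t_1)$, and apply~\eqref{WSI_lambda} with $\lambda=\Gamma(t_1,t_1)\in(0,\infty)$ to obtain a probability space carrying a Brownian motion $B^N$ and a continuous $\mathscr{C}$-valued weak solution $\xi^N$ of~\eqref{eq:SDE_initial}; Assumption~\ref{ass:preserving_nonnegativity} guarantees that $\Gamma(t_j,t_j)\in(0,\infty)$, so the scheme formulas are well defined.

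The inductive step splits into two parts. In \emph{Step~A} I extend $\hat X^N$ to $[t_k,t_{k+1})$ using formula~\eqref{eq:hatX}; the resulting path is càdlàg and $(\cF^N_t)$-adapted because the kernel values $\Gamma(\,\cdot\,,t_j)$ are deterministic and each $\hat X^N_{t_j},\hat X^N_{t_j-}$ is $\cF^N_{t_j}$-measurable. The crucial claim is that $\hat X^N_t\in\mathscr{C}$ for every $t\in[t_k,t_{k+1})$, in particular that $\hat X^N_{t_{k+1}-}\in\mathscr{C}$. Setting $x_j:=(\hat X^N_{t_j}-\hat X^N_{t_j-})/\Gamma(t_j,t_j)$, the induction hypothesis yields $\hat X^N_{t_l}=X_0+\sum_{j=1}^l x_j\Gamma(t_l,t_j)\in\mathscr{C}$ for $l=1,\ldots,k$, and transferring this discrete-time invariance under the affine map $t\mapsto X_0+\sum_{j=1}^k x_j\Gamma(t,t_j)$ to all $t\in[t_k,t_{k+1})$ is exactly the content of Proposition~\ref{prop_pos_gen}, which generalizes Definition~\ref{def:positivity} to arbitrary closed convex $\mathscr{C}$ and non-zero $X_0$ by leveraging the monotonicity of $t\mapsto\Gamma(t,s)$ in Assumption~\ref{ass:preserving_nonnegativity}. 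In \emph{Step~B}, since $\hat X^N_{t_{k+1}-}$ is $\cF^N_{t_k}$-measurable and $\mathscr{C}$-valued, I invoke~\eqref{WSI_lambda} with $\lambda=\Gamma(t_{k+1},t_{k+1})>0$ for this random initial condition; after extending the probability space as in~\cite[Definition II.7.1]{IW89} to support a new independent Brownian increment on $[t_k,t_{k+1})$, I obtain a continuous $\mathscr{C}$-valued weak solution $\xi^N$ of~\eqref{eq:SDE}, concatenate $B^N$ across the subintervals, and close the inductive step by setting $\hat X^N_{t_{k+1}}:=\xi^N_{t_{k+1}-}\in\mathscr{C}$ by closedness of $\mathscr{C}$.

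The principal obstacle is \emph{Step~A}: transferring discrete-time invariance to continuous-time invariance for the affine functional $t\mapsto X_0+\sum_j x_j\Gamma(t,t_j)$ against all supporting half-spaces of $\mathscr{C}$ goes strictly beyond the bare nonnegativity-preserving property recorded in Definition~\ref{def:positivity} and genuinely requires Proposition~\ref{prop_pos_gen}, whose proof exploits the nonincreasing behavior of $\Gamma(\,\cdot\,,s)$ to absorb the affine offset coming from $X_0$. Step~B is routine probabilistic bookkeeping: measurability of the random initial condition, extension of the probability space, and Brownian concatenation. Finally, the pathwise almost sure invariance stated in the lemma upgrades from the a.s.\ statement at each $t$ by the path regularity (càdlàg for $\hat X^N$, continuous on each subinterval for $\xi^N$) together with closedness of $\mathscr{C}$, applied on the countable family of subintervals $[t_k,t_{k+1})$, $k=0,\ldots,N-1$.
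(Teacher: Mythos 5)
Your proposal is correct and follows essentially the same route as the paper: induction over the grid intervals, using Proposition~\ref{prop_pos_gen} to propagate the invariance of $\hatX^N$ on each $[t_k,t_{k+1})$ and \eqref{WSI_lambda} with $\lambda=\Gamma(t_{k+1},t_{k+1})$ to get a $\mathscr{C}$-valued $\xi^N$, plus the probability-space extension bookkeeping. The only caveat is that Proposition~\ref{prop_pos_gen} is a scalar statement (nonnegativity with an offset $x_0\ge 0$), not a statement about general convex sets, so the step you describe loosely in Step~A must be carried out exactly as your last paragraph indicates — by writing $\mathscr{C}$ as an intersection of half-spaces $\{x:\alpha_\theta\cdot x+\beta_\theta\ge 0\}$ and applying the proposition to each scalar quantity $\alpha_\theta\cdot\hatX^N_t+\beta_\theta$, which is precisely what the paper does.
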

\begin{proof}
	We have to show that we can construct the processes $\hatX^N$ and $\xi^N$, so that they stay in~$\mathscr{C}$.	We proceed as in the proof of \cite[Theorem 3.5]{Alfonsi23} and we show by induction on $k\in\{1,\ldots,N\}$ that $\P(\hatX_t^N\in\mathscr{C},\forall t\in[0,t_k])=1$ and $\P(\xi_t^N\in\mathscr{C},\forall t\in[0,t_k))=1$ as follows.

	For $k=1$, since $X_0\in \mathscr{C}$, we trivially have $\P(\hatX_t^N\in \mathscr{C},\forall t\in[0, t_1))=1$ by construction. From~\eqref{WSI_lambda} with $\lambda=\Gamma(t_1,t_1)$, we get that there exists a weak solution $\xi$ such that  $\P(\xi_t^N \in \mathscr{C}, t\in[0, t_1))=1$. We then have $\hatX_{t_1}^N := \xi_{t_1-}^N\in\mathscr{C}$ almost surely.
	
	Suppose now that $\P(\hatX_t^N\in \mathscr{C},\forall t\in[0,t_k])=1$ for $k\geq1$. By using Equation~\eqref{eq:hatX}, we write
	\begin{equation*}
	\hatX_{t}^N = X_0 + \sum_{j=1}^{k} \frac{\hatX_{t_j}^N - \hatX_{t_j-}^N}{\Gamma(t_j,t_j)}\,\Gamma(t,t_j), \qquad \text{for all } t\in[t_k,t_{k+1}),
	\end{equation*} 
	Since $\mathscr{C}$ is a nonempty closed convex subset, we can write it as a countable intersection of half-spaces:
	\begin{equation*}
		\mathscr{C}=\bigcap_{\theta \in \Theta} \bigl\{x \in \R^d, \  \alpha_\theta \cdot x +\beta_\theta \ge 0 \bigr\},
	\end{equation*}
	where $\Theta$ is a countable index set, $\alpha_\theta \in \R^d$, $\beta_\theta \in \R$ and $\cdot$ is the scalar product. We obtain
	$$ \alpha_\theta \cdot \hatX_{t}^N +\beta_\theta= \alpha_\theta \cdot X_0 +\beta_\theta + \sum_{j=1}^{k} \frac{\alpha_\theta \cdot(\hatX_{t_j}^N - \hatX_{t_j-}^N)}{\Gamma(t_j,t_j)}\,\Gamma(t,t_j). $$
	From the induction hypothesis, we have $\hatX_{t_l}^N\in \mathscr{C}$ for $l\le k$ and thus $\alpha_\theta \cdot \hatX_{t_l}^N +\beta_\theta= \alpha_\theta \cdot X_0 +\beta_\theta + \sum_{j=1}^{l} \frac{\alpha_\theta \cdot(\hatX_{t_j}^N - \hatX_{t_j-}^N)}{\Gamma(t_j,t_j)}\,\Gamma(t_l,t_j)\ge 0$. We make then use of Proposition \ref{prop_pos_gen}, using Assumption~\ref{ass:preserving_nonnegativity}, to get $\P(\alpha_\theta \cdot \hatX_{t}^N +\beta_\theta  \ge 0,\forall t\in[t_k, t_{k+1}))=1$ and thus $\P( \hatX_t^N\in\mathscr{C},\forall t\in[t_k, t_{k+1}))=1$.  We get in particular that $\hatX_{t_{k+1}-}^N \in \mathscr{C}$ a.s. and by using~\eqref{WSI_lambda} with $\lambda=\Gamma(t_{k+1},t_{k+1})$, there exists a weak continuous solution $\xi^N = (\xi_t^N)_{t\in[t_k, t_{k+1})}$ of Equation~\eqref{eq:SDE} that satisfies $\P(\xi_t^N \in \mathscr{C},t\in[t_k, t_{k+1}))=1$. This  yields $\hatX_{t_{k+1}}^N := \xi_{t_{k+1}-}^N \in \mathscr{C}$ almost surely, and concludes the proof of the induction step.
\end{proof}

  {
\begin{remark}\label{rk:matrix_kernels}
    Let us consider, for this remark only, a matrix valued kernel $\Gamma : \Delta_T \to \mathcal{M}_d(\R)$ that is continuous and such that $\Gamma(t,t)$ is invertible for all $t \in [0,T]$. Then, we can define in an analogous way the following scheme with~\eqref{eq:SDE} and
    \begin{equation*}
	\hatX_{t}^N = X_0 + \sum_{j=1}^{k} \Gamma(t,t_j) \Gamma(t_j,t_j)^{-1} \left(\hatX_{t_j}^N - \hatX_{t_j-}^N \right), \qquad \text{for all } t\in[t_k,t_{k+1}),
	\end{equation*} 
    instead of~\eqref{eq:hatX}.  Let us suppose that $\mathscr{D}\subset \R^d$ is a domain such that:
    \begin{itemize}
        \item For any matrix $M \in \{\Gamma(t,t), t\in[0,T] \}$ and any $x\in \mathscr{D}$, the SDE $\xi^{M,x}_t=x+\int_0^tM b(\xi^{M,x}_s) \ud s + \int_0^tM \sigma(\xi^{M,x}_s) \ud B_s  $ has a weak solution that stays in~$\mathscr{D}$,
        \item For any $x_0 \in \mathscr{D}$,  $K\in \N^*$ and any $x_1,\dots,x_K \in \R^d$ and $0\le t_1< \dots<t_K<T$ such that
	\begin{align*}
	\forall k \in \{1, \dots, K\},\ x_0+ \sum_{k'=1}^k \Gamma(t_k,t_{k'})x_{k'} \in \mathscr{D},  
	\end{align*}
	we have $\forall t \in[0,T], x_0+ \sum_{k :  t_k \le t}  \Gamma(t,t_k)x_k \in \mathscr{D}$. 
    \end{itemize}
    Then, following the proof of Lemma~\ref{lem:nonnegative_approx}, we get $ \P(\hatX_t^N\in\mathscr{C},\forall t\in[0,T])=1$  and $\P(\xi_t^N\in\mathscr{C},\forall t\in[0,T))=1$. Passing to the limit as $N\to \infty$, we may then get the weak existence of $\mathscr{D}$-valued SVEs. However, the difficulty is of course to exhibit relevant domains $\mathscr{D}$ and matrix kernels $\Gamma$ that satisfy the second condition.  This is left for further research.  Note that when $\Gamma$ is scalar and satisfies Assumption~\ref{ass:preserving_nonnegativity}, this condition is satisfied for any closed convex set~$\mathscr{D}$, which is the framework of this paper. 
\end{remark}
}

\begin{remark}\label{rk_approx_scheme_Gamma}
    The above scheme is well adapted to work with the nonnnegativity preserving assumption, as illustrated in the proof of Lemma~\ref{lem:nonnegative_approx}. However, it requires to have $\Gamma(t,t)\not= 0$.  This is not a practical issue for nonnegativity preserving kernels in view of Remark~\ref{rk_whyR+} but may be a limitation for general kernels. It is however possible to define in a similar manner the following scheme, for $k\ge 0$, $ t\in[t_k, t_{k+1})$,
    \begin{align}
        \label{eq:checkX}
	\check{X}_t^N &= X_0 + \sum_{j=1}^k \Gamma(t,t_j)\int_{t_{j-1}}^{t_j} {\bigl(b(\check{\xi}_{s}^N)\,\ud s + \sigma(\check{\xi}_{s}^N)\,\ud \check{B}^N_s\bigr)},\\
    \check{\xi}_t^N &= \check{X}_{t_{k+1}-}^N + \int_{t_k}^t {\bigl(b(\check{\xi}_{s}^N)\,\ud s + \sigma(\check{\xi}_{s}^N)\,\ud \check{B}^N_s\bigr)}, \ t\in[t_k, t_{k+1}).
    \end{align}
 We will use this approximation scheme in the proof of Theorem~\ref{thm_weak_Rd}.
\end{remark}

\subsection{Main existence result of $\mathscr{C}$-valued solution.}

Now that we proved that the approximation scheme stays in~$\mathscr{C}$ in Lemma~\ref{lem:nonnegative_approx}, it suffices to establish convergence, while also relaxing the assumptions on the kernel to allow for singularities on the diagonal. We consider the following assumption.

\begin{assumption}\label{ass:conditions_kernel_limweak} 
The kernel $\Gamma:\Delta \to \R_+$ satisfies Assumption~\ref{ass:conditions1_kernel},
 and there exists a sequence of kernels $(\Gamma_M)_{M \in \N}$ such that each $\Gamma_M$ satisfies Assumption~\ref{ass:preserving_nonnegativity}, 
\begin{align}\label{eq:L2convergence}
    \int_0^t(\Gamma(t,s)-\Gamma_M(t,s))^2 \ud s \to_{M\to \infty} 0, \quad t \geq 0.
\end{align} and for every $T\in(0,+\infty)$, there exist $\eta>0$, $\gamma\in(0,1/2]$ such that for all $M\in\N$,
\begin{equation}\label{eq:GammaMestimate}
	\int_s^t {\Gamma_M(t,u)^2\,\ud u} + \int_0^s {(\Gamma_M(t,u) - \Gamma_M(s,u))^2\,\ud u} \leq \eta\,|t-s|^{2\gamma}, \quad (t,s)\in\Delta_T.
	\end{equation}
\end{assumption}  
Let us observe that for a nonnegative kernel that satisfies Assumption~\ref{ass:conditions1_kernel}, we already know by Lemma~\ref{lem_approx_kernel} that we can find a sequence of continuous  kernels $\Gamma_M$ that are positive on the diagonal and such that~\eqref{eq:L2convergence}
and~\eqref{eq:GammaMestimate} hold. Assumption~\ref{ass:conditions_kernel_limweak} requires in addition that this approximating family is made with continuous, nonnegativity preserving kernels,  which are non-increasing with respect to their first variable.

 {\begin{remark}
    Let us note that if a kernel~$\Gamma$ satisfy Assumptions~\ref{ass:conditions1_kernel} and~\ref{ass:preserving_nonnegativity}, then it satisfies Assumption~\ref{ass:conditions_kernel_limweak} (one can take simply $\Gamma_M=\Gamma$). However, assuming Assumption~\ref{ass:preserving_nonnegativity} may be too strong since it excludes important kernels such as fractional kernels.    We show later in Proposition~\ref{prop_approx_nonnneg} that Assumption~\ref{ass:conditions_kernel_limweak} is satisfied by the family of completely monotone double kernels introduced in Definition~\ref{def:completely_monotone_double}.\\
    In this paper, Assumption~\ref{ass:conditions1_kernel} is required for the weak existence of SVEs (Theorem~\ref{thm_weak_Rd}) while Assumption~\ref{ass:conditions_kernel_limweak} is in force to get weak existence of SVEs in a convex domain (Theorem~\ref{thm_maininvariance}). 
\end{remark}}

We arrive to our main theorem  of  existence  of weak $\mathscr{C}$-valued solutions to the SVE \eqref{eq:SVE}. Now that we are allowing singularities of the kernel on the diagonal, we introduce the following set: 
\begin{equation}
\label{eq:DT_definition}
\Lambda_T = 
\begin{cases} 
\{\Gamma(t, t) : t \in [0, T]\} & \text{if } \Gamma \text{ is continuous on } \Delta_T,\\\mathbb{R}_+ &  
 \text{otherwise.}
\end{cases}
\end{equation}

\begin{theorem}\label{thm_maininvariance}
	Let $T>0$. Let $\mathscr{C}$ be a nonempty closed convex domain and $X_0 \in \mathscr{C}$. Let $\Gamma$ satisfy Assumption~\ref{ass:conditions_kernel_limweak}, and $b,\sigma$ satisfy Assumption~\ref{ass:conditions1_coefficients}. We assume that~\eqref{WSI_lambda} holds for any $\lambda \in \Lambda_T$. Then, there exists a  weak continuous solution to the SVE \eqref{eq:SVE} that stays in~$\mathscr{C}$ on $[0,T]$.  {In addition, for any $T>0$, the paths of $X$ on $[0,T]$ are Hölder continuous of any order less than $\gamma$, where $\gamma$ is the constant associated with $\Gamma$ and $T$ in Assumption \ref{ass:conditions1_kernel}. }
\end{theorem}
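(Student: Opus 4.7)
My plan is to combine the domain-preserving scheme of Section~\ref{S:scheme} with the kernel approximation provided by Assumption~\ref{ass:conditions_kernel_limweak}, and to pass to the limit in two nested stages. For each $M\in\N^*$, the approximating kernel $\Gamma_M$ satisfies the hypotheses of Lemma~\ref{lem:nonnegative_approx}: it is continuous on $\Delta_T$, strictly positive on the diagonal, preserves nonnegativity, and is nonincreasing in its first variable. A subtle point is to ensure $\{\Gamma_M(t,t):t\in[0,T]\}\subset\Lambda_T$ so that \eqref{WSI_lambda} is available for $\lambda=\Gamma_M(t,t)$: this is automatic when $\Gamma$ is singular since then $\Lambda_T=\R_+$, while for continuous $\Gamma$ one may select the $\Gamma_M$ of Lemma~\ref{lem_approx_kernel} so that their diagonal values lie in $\{\Gamma(t,t):t\in[0,T]\}$. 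Lemma~\ref{lem:nonnegative_approx} then yields, for every $M,N$, a c\`adl\`ag process $\hatX^{M,N}$ taking values in $\mathscr{C}$ almost surely.

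The next step is to derive estimates uniform in $M,N$. Substituting \eqref{eq:SDE} into \eqref{eq:hatX} gives
\begin{equation*}
\hatX^{M,N}_t=X_0+\sum_{j:\,t_j\le t}\Gamma_M(t,t_j)\int_{t_{j-1}}^{t_j}\!\bigl(b(\xi^{M,N}_s)\,\ud s+\sigma(\xi^{M,N}_s)\,\ud B^{M,N}_s\bigr),
\end{equation*}
and combining the Burkholder-Davis-Gundy inequality, Assumption~\ref{ass:conditions1_coefficients}, the bound \eqref{eq:GammaMestimate}, and a Gronwall-type argument, I expect for every $p\ge 2$ the estimates $\sup_{M,N,\,t\in[0,T]}\E[|\hatX^{M,N}_t|^p]<\infty$ and $\E[|\hatX^{M,N}_t-\hatX^{M,N}_s|^p]\le C_p(t-s)^{p\gamma}$, modulo the jumps of $\hatX^{M,N}$ at the grid points whose size is controlled by one SDE increment and vanishes as $N\to\infty$. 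Kolmogorov's criterion then gives tightness in $C([0,T],\R^d)$ and H\"older continuity of any order strictly less than $\gamma$ for every limit point.

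Finally, I identify the limit in two passages. For fixed $M$, sending $N\to\infty$ and invoking Skorokhod representation, the continuity of $\Gamma_M,b,\sigma$ together with the uniform bounds let me pass to the limit in the scheme and obtain a weak continuous solution $X^M$ of the SVE with kernel $\Gamma_M$; closedness of $\mathscr{C}$ transfers the invariance from $\hatX^{M,N}$ to $X^M$. Then, sending $M\to\infty$, tightness of $\{X^M\}$ is inherited from the uniform estimates, so up to a subsequence $X^M\Rightarrow X$ with $X\in\mathscr{C}$ almost surely. The main obstacle is identifying $X$ as a solution of the SVE with the original, possibly singular kernel $\Gamma$. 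The crucial reduction is to control
\begin{equation*}
\E\Bigl[\Bigl|\int_0^t\bigl(\Gamma_M(t,s)-\Gamma(t,s)\bigr)\,\sigma(X^M_s)\,\ud B^M_s\Bigr|^2\Bigr],
\end{equation*}
which by the It\^o isometry, the uniform $L^p$-bound on $X^M$ and the linear growth of $\sigma$ is majorized by a constant times $\int_0^t(\Gamma_M(t,s)-\Gamma(t,s))^2\,\ud s$ and hence tends to zero by \eqref{eq:L2convergence}. Combining this estimate with a martingale-problem (or stable-convergence) argument handles the fact that the driving Brownian motions $B^M$ a priori live on different probability spaces and yields the desired weak $\mathscr{C}$-valued solution $X$.
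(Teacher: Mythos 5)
Your proposal follows essentially the same route as the paper: an inner limit $N\to\infty$ of the domain-preserving scheme for each continuous kernel $\Gamma_M$ (this is the paper's Lemma~\ref{lemma_weak_C0}, whose invariance part rests on Lemma~\ref{lem:nonnegative_approx}), followed by an outer limit $M\to\infty$ using uniform moment/H\"older bounds, tightness, It\^o isometry with \eqref{eq:L2convergence}, and a Kurtz--Protter/stable-convergence argument for the stochastic integrals (the paper's Lemma~\ref{lemma_stability}), with closedness of $\mathscr{C}$ and path continuity transferring the invariance to the limit.

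One point of your argument does not go through as stated: your patch for the case where $\Gamma$ is continuous on $\Delta_T$ (so that $\Lambda_T=\{\Gamma(t,t):t\in[0,T]\}$ rather than $\R_+$). You propose to take the approximants $\Gamma_M$ from Lemma~\ref{lem_approx_kernel} with diagonal values forced into $\{\Gamma(t,t):t\in[0,T]\}$, but Lemma~\ref{lem_approx_kernel} gives neither this diagonal property nor, more importantly, the properties required by Lemma~\ref{lem:nonnegative_approx}: its kernels are not claimed to preserve nonnegativity nor to be nonincreasing in the first variable, so the invariance step (via Proposition~\ref{prop_pos_gen}) breaks for them; conversely, the nonnegativity-preserving $\Gamma_M$ of Assumption~\ref{ass:conditions_kernel_limweak} may have diagonal values outside $\Lambda_T$, for which \eqref{WSI_lambda} is not assumed. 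The paper resolves this case differently and more simply: when $\Gamma$ is continuous (and itself satisfies Assumption~\ref{ass:preserving_nonnegativity}), no $M$-approximation is performed at all; the scheme is run directly with $\Gamma$, whose diagonal values are exactly $\Lambda_T$, and the second part of Lemma~\ref{lemma_weak_C0} concludes. In the singular case $\Lambda_T=\R_+$ your argument is correct and matches the paper.
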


\begin{proof}
    The proof is given in Section~\ref{S:proofs_main_existence}.
\end{proof}

\begin{remark}
	In Assumption~\ref{ass:conditions_kernel_limweak}, we suppose that the approximating family $\Gamma_M$ preserves nonnegativity on~$\R_+$. In a straightforward manner, it is in fact sufficient to assume that these kernels  preserve nonnegativity on $[0,T]$ to get $\P(X_t \in \mathscr{C}, t\in [0,T])=1$ in Theorem~\ref{thm_maininvariance}.
\end{remark}
  {\begin{remark}
    Let $b:\mathscr{C}\to \R^d$ and $\sigma:\mathscr{C}\to \R^d$ be coefficients that satisfy Assumption~\ref{ass:conditions1_coefficients}, but for $x\in \mathscr{C}$. Let $\Pi_\mathscr{C}$ denote the $L^2$-projection on the closed convex set $\mathscr{C}$. Then, $b\circ \Pi_\mathscr{C}$ and $\sigma\circ \Pi_\mathscr{C}$ are defined on $\R^d$ and satisfy Assumption~\ref{ass:conditions1_coefficients}.
\end{remark}}
\begin{remark}\label{Rk_time_inhomogeneous} To lighten notation, we have preferred to present our results for SVEs with time homogeneous coefficients,   { with a Brownian motion of the same dimension as $X$}.  However, the results of Theorems~\ref{thm_weak_Rd} and~\ref{thm_maininvariance} can be easily extended to the case of time inhomogeneous coefficients $b:\R_+ \times \R^d\to \R^d$ and    {$\sigma:\R_+ \times \R^d\to \mathcal{M}_{d'}(\R)$ with $d' \ge 1$} such that
$$\forall t\ge 0,\forall x \in \R^d, |b(t,x)|+|\sigma(t,x)|\le C_{LG}(1+|x|).$$
  {Let $B'$ denote a $d'$-dimensional Brownian motion.} If $\Gamma$ satisfies Assumption~\ref{ass:conditions1_kernel}, there exists a weak solution to 
\begin{equation}\label{SVE_timedep}
    X_t= X_0 + \int_0^t  \Gamma(t,s)b(s,X_s) \ud s +\int_0^t  \Gamma(t,s) \sigma(s,X_s) \ud B'_s,
\end{equation} 
that is H\"older continuous on every $[0,T]$, $T>0$. Besides, if  $\Gamma$ satisfies Assumption~\ref{ass:conditions_kernel_limweak}  and $\mathscr{C}\subset \R^d$ is a nonempty closed convex set such that for $T>0$ and any $t \in [0,T]$, $x\in \mathscr{C}$ and $\lambda\in \Lambda_T$, there exists a weak solution to the SDE
$$\zeta^{\lambda, x,t}_u=x +\int_t^u \lambda b(s,\zeta^{\lambda, x,t}_s) \ud s +\int_t^u \lambda \sigma (s,\zeta^{\lambda, x,t}_s) \ud B'_s, \ u \in[t,T],$$
such that $\P(\zeta^{\lambda, x,t}_u \in \mathscr{C}, u \in [t,T])=1$, then there exists an H\"older continuous weak solution to~\eqref{SVE_timedep} on  $[0,T]$ such that $\P(X_t \in \mathscr{C}, t \in [0,T])=1$. 

Note that the extension to time inhomogeneous coefficients allows to deal with input curves $g_0:\R_+\to \R^d$. For example when $g_0(t)=X_0+\int_0^t \Gamma(t,s) h_0(s) \ud s$, $X$ is a weak solution of $X_t= g_0(t) + \int_0^t  \Gamma(t,s)\tilde{b}(s,X_s) \ud s +\int_0^t  \Gamma(t,s) \sigma(s,X_s) \ud B'_s$ if and only if it is a weak solution of
$X_t= X_0 + \int_0^t  \Gamma(t,s)b(s,X_s) \ud s +\int_0^t  \Gamma(t,s) \sigma(s,X_s) \ud B'_s$ with $b(t,x)=\tilde{b}(t,x)+h(t)$, and there exists a H\"older continuous solution in~$\mathscr{C}$ on $[0,T]$ under the above hypotheses.  

\end{remark}

 {\begin{remark}
    We note that Theorem~\ref{thm_maininvariance} reduces the invariance problem for $\mathcal{C}$ in the Volterra equation \eqref{eq:SVE} to an invariance problem for an infinite family of stochastic differential equations (see \ref{WSI_lambda}), under suitable structure on the kernel.  A related splitting strategy has appeared using a different approach in \cite{CT19}, for  the case $\mathcal{C}=\mathbb{R}_+$, where the Volterra dynamics are decomposed into a classical infinite-dimensional SDE together with a variation-of-constants type integral term which would require suitable structure on the kernel to preserve non-negativity, see \cite[Theorem 4.17]{CT19}.  The approach there relies on the so-called multifactor Markovian lift. We refer to Section~\ref{S:lift} for more details on such lifts.
\end{remark}}

The next corollary strengthens the weak existence result of Theorem~\ref{thm_maininvariance} to a strong existence and uniqueness under additional Lipschitz conditions on the coefficients $(b,\sigma)$ using the powerful and generic framework of \cite{K:14}.  We say that a \textit{weak continuous solution} $X$ is a \emph{strong continuous solution} if it is adapted to the augmented natural filtration generated by $B$.

\begin{corollary}\label{cor_strong_domain}
    Let $T>0$. Let $\mathscr{C}$ be a nonempty closed convex domain and $X_0 \in \mathscr{C}$. Let $\Gamma$ satisfy Assumption~\ref{ass:conditions_kernel_limweak}, and $b,\sigma$ be Lipschitz continuous, i.e.~there exists $C>0$ such that 
    $$ |b(x) - b(y)|  + |\sigma(x) - \sigma(y)|\leq C |x - y|, \quad x,y\in \R^d.$$  We assume that~\eqref{WSI_lambda} holds for any $\lambda \in \Lambda_T$. Then, there exists a unique strong continuous solution to the SVE \eqref{eq:SVE} that stays in~$\mathscr{C}$ on $[0,T]$.  {In addition, for any $T>0$, the paths of $X$ on $[0,T]$ are Hölder continuous of any order less than $\gamma$, where $\gamma$ is the constant associated with $\Gamma$ and $T$ in Assumption \ref{ass:conditions1_kernel}. }
\end{corollary}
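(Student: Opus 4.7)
The plan is to combine Theorem~\ref{thm_maininvariance}, which provides weak existence of a $\mathscr{C}$-valued continuous solution with the claimed Hölder regularity, with a pathwise uniqueness argument exploiting the Lipschitz assumption, and then to invoke the generalized Yamada--Watanabe-type result of Kurtz \cite{K:14} to conclude strong existence and uniqueness. First I would observe that the Lipschitz bound $|b(x)-b(0)|+|\sigma(x)-\sigma(0)| \leq C|x|$ implies the linear growth condition of Assumption~\ref{ass:conditions1_coefficients} with $C_{LG} := \max(|b(0)|,|\sigma(0)|) + C$, so that Theorem~\ref{thm_maininvariance} applies and delivers a weak continuous solution $X$ with $\P(X_t \in \mathscr{C},\ t \in [0,T]) = 1$ whose paths are Hölder of any order less than~$\gamma$.

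The next step is to establish pathwise uniqueness. Suppose $X^1$ and $X^2$ are two continuous solutions defined on the same filtered probability space, driven by the same Brownian motion $B$, with common initial condition $X_0$. A preliminary Volterra--Gronwall argument based on linear growth and Assumption~\ref{ass:conditions1_kernel} yields $\sup_{t\in[0,T]}\E|X^i_t|^2 < \infty$ for $i=1,2$. Setting $\varphi(t) := \E|X^1_t - X^2_t|^2$, Cauchy--Schwarz applied to the drift term, Itô's isometry on the diffusion term, and the Lipschitz bound together give
\begin{equation*}
\varphi(t) \leq 2C^2 \left(\int_0^t \Gamma(t,s)^2\,\ud s\right)\int_0^t \varphi(s)\,\ud s + 2C^2\int_0^t \Gamma(t,s)^2\,\varphi(s)\,\ud s.
\end{equation*}
Since $\sup_{t\in[0,T]}\int_0^t \Gamma(t,s)^2\,\ud s \leq \eta T^{2\gamma}$ by Assumption~\ref{ass:conditions1_kernel}, the Volterra kernel $k(t,s) := 2C^2(\eta T^{2\gamma} + \Gamma(t,s)^2)$ has a uniformly bounded $s$-integral on $[0,T]$, and a generalized (Volterra) Gronwall inequality applied to the bounded function $\varphi$ forces $\varphi \equiv 0$, which yields $X^1 = X^2$ almost surely by continuity.

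Given weak existence together with pathwise uniqueness, Kurtz's extension of the Yamada--Watanabe theorem \cite{K:14} provides a unique strong continuous solution $X$ adapted to the augmented natural filtration generated by $B$. By pathwise uniqueness this $X$ is indistinguishable from the weak solution produced by Theorem~\ref{thm_maininvariance}, so it remains in~$\mathscr{C}$ on $[0,T]$ and inherits the Hölder regularity of any order less than~$\gamma$. The main technical point to check carefully is the Volterra--Gronwall step: because $\Gamma$ may be singular on the diagonal, the classical Gronwall lemma does not apply, and one must instead iterate the inequality (or invoke a resolvent estimate) to handle the non-convolution kernel $k(t,s)$. Once this is in place, the rest of the proof is an essentially formal appeal to the general framework of \cite{K:14}.
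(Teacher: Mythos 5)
Your proposal is correct and follows essentially the same route as the paper: weak existence of the $\mathscr{C}$-valued solution from Theorem~\ref{thm_maininvariance}, pathwise uniqueness from the Lipschitz condition via a Volterra--Gronwall estimate (which the paper dismisses as "standard estimates" and you rightly handle with a resolvent/iteration argument in the spirit of \cite[Lemmas 2.1--2.2]{Zhang10}, as in Lemma~\ref{lem:moment_bound}), and then the Yamada--Watanabe-type result of \cite{K:14} to upgrade to strong existence and uniqueness, with the $\mathscr{C}$-invariance and Hölder regularity transferred by indistinguishability.
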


\begin{proof}
  Using the Lipschitz continuity of the coefficients $(b,\sigma)$ it is straightforward to obtain, through standard estimates,  pathwise uniqueness of solutions to the SVE \eqref{eq:SVE}. An application of Theorem~\ref{thm_maininvariance} yields weak existence. Invoking 
  \citet[Theorem~1.5 and Lemma~2.10]{K:14}, pathwise uniqueness and weak existence imply strong existence and uniqueness and ends the proof. 
\end{proof}

We then conclude this section by giving an application of Theorem~\ref{thm_maininvariance} to the family of non-convolution kernels presented in Exemple \ref{ex:fractional_double}. This leads to the following theorem.
\begin{corollary}\label{cor:fractional_invariance}
	Let $T>0$. Let $\mathscr{C}$ be a nonempty closed convex domain and $X_0 \in \mathscr{C}$. Let $b,\sigma$ satisfy Assumption~\ref{ass:conditions1_coefficients} and \eqref{WSI_lambda} hold for any $\lambda>0$. Further, let $G:\R_+^*\to\R$ be a completely monotone convolution kernel satisfying \eqref{eq:conditions_convolution}, and $h:\R_+^*\to\R_+^*$ be a locally integrable function, locally bounded away from zero and such that $t \mapsto \int_0^t {h(u)\,\ud u}$ is $\beta$-H\"older continuous on $[0,T]$ with $\beta \in (0,1]$. Consider the SVE
	\begin{equation}\label{eq:SVE_fractional}
		X_t = X_0 + \int_0^t {G\biggl(\int_s^t {h(u)\,\ud u}\biggr)\Bigl(b(X_{s})\,\ud s + \sigma(X_{s})\,\ud B_s\Bigr)},
	\end{equation}
	Then, there exists a weak continuous solution to the SVE \eqref{eq:SVE_fractional} that stays in $\mathscr{C}$ on $[0,T]$. In addition, the paths of $X$ on $[0,T]$ are Hölder continuous of any order less than $\beta\,\gamma$, where $\gamma$ is the constant appearing in \eqref{eq:conditions_convolution}. \\
    If, moreover, $b$ and $\sigma$ are Lipschitz continuous, then, there exists a unique strong continuous solution to the SVE \eqref{eq:SVE_fractional} whose paths are Hölder continuous of any order less than $\beta\,\gamma$.
\end{corollary}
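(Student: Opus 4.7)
The plan is to reduce Corollary~\ref{cor:fractional_invariance} to Theorem~\ref{thm_maininvariance} in the weak case, and to Corollary~\ref{cor_strong_domain} in the Lipschitz case, by verifying that the non-convolution kernel $\Gamma(t,s) = G\bigl(\int_s^t h(u)\,\ud u\bigr)$ satisfies Assumption~\ref{ass:conditions_kernel_limweak}. The remaining ingredients required by these results---Assumption~\ref{ass:conditions1_coefficients} on $(b,\sigma)$, the closed convex domain $\mathscr{C}$, and \eqref{WSI_lambda} for every $\lambda \in \Lambda_T \subseteq \R_+$---are supplied directly by the hypotheses of the corollary.

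First I would verify Assumption~\ref{ass:conditions1_kernel} for $\Gamma$ by reusing the calculation of Example~\ref{ex:fractional_double}. Fix $T > 0$. Since $h$ is locally bounded away from zero on $\R_+^*$ and $t \mapsto \int_0^t h(u)\,\ud u$ is $\beta$-Hölder on $[0,T]$, I obtain constants $\lambda_T, C_T > 0$ with $h \geq \lambda_T$ on $(0,T]$ and $\int_s^t h(u)\,\ud u \leq C_T(t-s)^{\beta}$ for $(t,s) \in \Delta_T$. Plugging these bounds into the change of variable $v = \int_u^t h(r)\,\ud r$ together with estimate~\eqref{eq:conditions_convolution} on $G$ yields Assumption~\ref{ass:conditions1_kernel} with Hölder exponent $\beta\gamma$, exactly as at the end of Example~\ref{ex:fractional_double}.

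The main step---and the reason the complete monotonicity of $G$ is imposed---is the construction of the approximating family $\Gamma_M$. Because $G$ is completely monotone, the composed kernel $\Gamma$ fits into the class of completely monotone double kernels of Definition~\ref{def:completely_monotone_double}, so Proposition~\ref{prop_approx_nonnneg} supplies a sequence $\Gamma_M$ of continuous double kernels that are strictly positive on the diagonal, nonincreasing in their first variable, preserve nonnegativity in the sense of Definition~\ref{def:positivity}, and satisfy the $L^2$-convergence~\eqref{eq:L2convergence}. Rerunning the Example~\ref{ex:fractional_double} computation on each approximant, using only the structural estimates inherited from $G$ and $h$, will then deliver the uniform bound~\eqref{eq:GammaMestimate} with the same exponent $\beta\gamma$.

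With Assumption~\ref{ass:conditions_kernel_limweak} thus verified, Theorem~\ref{thm_maininvariance} produces a continuous weak $\mathscr{C}$-valued solution on $[0,T]$ whose paths are Hölder of any order strictly less than $\beta\gamma$. When $(b,\sigma)$ is additionally Lipschitz, the same kernel verification combined with the standard Grönwall argument for pathwise uniqueness allows me to invoke Corollary~\ref{cor_strong_domain}, which upgrades the weak solution to the strong existence and uniqueness of a $\mathscr{C}$-valued solution with the same Hölder regularity. I expect the only genuinely delicate point to be checking that the approximants $\Gamma_M$ produced by Proposition~\ref{prop_approx_nonnneg} can be organized so that they inherit the uniform Hölder estimate~\eqref{eq:GammaMestimate} with the sharp exponent $\beta\gamma$, since this couples the double-kernel approximation machinery with the Example~\ref{ex:fractional_double} bounds.
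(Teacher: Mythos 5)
Your proposal is correct and follows essentially the same route as the paper: verify Assumption~\ref{ass:conditions1_kernel} via the computation of Example~\ref{ex:fractional_double}, use Bernstein's theorem to view $\Gamma(t,s)=G\bigl(\int_s^t h(u)\,\ud u\bigr)$ as a completely monotone double kernel in the sense of Definition~\ref{def:completely_monotone_double}, invoke Proposition~\ref{prop_approx_nonnneg} to obtain Assumption~\ref{ass:conditions_kernel_limweak}, and conclude with Theorem~\ref{thm_maininvariance} and Corollary~\ref{cor_strong_domain}. The only point you flag as delicate is in fact already settled by Proposition~\ref{prop_approx_nonnneg} itself, whose proof gives \eqref{eq:GammaMestimate} for the truncated kernels $\Gamma_M$ directly from the pointwise comparisons $0\le\Gamma_M\le\Gamma$ and $0\le\Gamma_M(s,u)-\Gamma_M(t,u)\le\Gamma(s,u)-\Gamma(t,u)$, with the same constants as for $\Gamma$, so no rerun of the Example~\ref{ex:fractional_double} bounds on each approximant is needed.
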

\begin{proof}
    We first recall that $\Gamma(t,s) = G(\int_s^t {h(u)\,\ud u})$ satisfies Assumption~\ref{ass:conditions1_kernel} as shown in Example~\ref{ex:fractional_double}. As $G$ is a completely monotone function, by Bernstein's theorem, it can be written as $t\mapsto \int_0^{+\infty} {e^{-x\,t}\,\mu(\ud x)}$ for some Borel measure $\mu$ on $\R_+^*$ finite on compact sets. Then, injecting $\int_s^t {h(u)\,\ud u}$ into it,
	\begin{equation*}
	\Gamma(t,s) = \int_0^{+\infty} {e^{-x\int_s^t {h(u)\,\ud u}}\,\mu(\ud x)},
	\end{equation*}
    and use Proposition~\ref{prop_approx_nonnneg} thereafter to get that $\Gamma$ satisfies Assumption~\ref{ass:conditions_kernel_limweak}. We can finally apply Theorem~\ref{thm_maininvariance} and Corollary~\ref{cor_strong_domain}.
\end{proof}

\section{Explicit specifications of the kernel and coefficients}\label{S:ecamples}
\subsection{Constructing double Volterra kernels that preserve nonnegativity}

\subsubsection{Examples}
We first give a simple example of nonnegativity preserving double kernel that is not of convolution type and extends~\cite[Example 2.2]{Alfonsi23}, we have: 
	\begin{example}\label{ex:exp} Let $\Gamma(t,s) = b(s)c(t) e^{-\rho((s,t])}$ for $(s,t) \in \Delta$ where $\rho$ is a Borel measure on $\R_+$ finite on compact sets, $b:\R_+\to \R_+$ and $c:\R_+\to \R_+^*$.  Then, $\Gamma$ preserves nonnegativity. Indeed, let $x_1,\ldots, x_K \in \R$  and and $0\le t_1< \dots<t_K$ be such that \eqref{eq:defnonneg} is satisfied. Then, for an arbitrary  $t\geq 0 $ and $k$ such that $t_k\leq t < t_{k+1}$ (convention $t_0=0$), we get 
	\begin{equation*}
		\sum_{k':t_{k'}\leq t} x_{k'}\,\Gamma(t,t_{k'}) =  \sum_{k'=1}^k   x_{k'}\,b(t_{k'})c(t) e^{-\rho((t_{k'}, t])} = \frac{c(t)}{c(t_k)}e^{-\rho((t_{k}, t])}\sum_{k'=1}^k  x_{k'}\,\Gamma(t_k,t_{k'}) \geq 0. 
	\end{equation*}
	Let us note that this example includes the particular case $\rho( \ud u)=f(u)\,\ud u$ where $f:\R_+^*\to\R$ is locally integrable. Then, $\Gamma(t,s) = b(s)c(t)e^{-\int_s^t {f(u)\,\ud u}}$ preserves nonnegativity.
\end{example}

We now present a general way to obtain nonnegativity preserving double kernels from convolution kernels. Section~\ref{Sec_kernels} presents a study  of nonnegativity preserving double kernels, and Theorem~\ref{thm_char_pos} gives a characterization that enables us to obtain the following corollary. 
\begin{corollary}\label{cor_convolution}
	Let $G:\R_+\to \R_+$ be a convolution kernel such that $G(0)>0$ that preserves nonnegativity in the sense of~\cite[Definition 2.1]{Alfonsi23}, i.e. for any $K\in \N^*$,  
	$x_1,\dots,x_K \in \R$ and $0\le t_1< \dots<t_K$ such that
	$$\forall k \in \{1, \dots, K\},\ \sum_{k'=1}^k x_{k'}G(t_k-t_{k'})\ge 0 \implies \forall t \ge 0, \sum_{k :  t_k \le t} x_k G(t-t_k) \ge 0.$$ 
	Let $\rho$ be a Borel measure on $\R_+$ finite on compact sets. Then, the double kernels $\Gamma^r(t,s)=G(\rho((s,t]))$ and $\Gamma^\ell(t,s)=G(\rho([s,t)))$ preserve nonnegativity. 
\end{corollary}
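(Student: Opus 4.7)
The plan is to reduce the double-kernel property to the convolution one by a time change, then invoke the assumed convolution-preserving property of $G$ (equivalently, apply the characterization provided by Theorem~\ref{thm_char_pos}).

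\emph{Step 1 (time change).} Set $\varphi(t) = \rho((0,t])$ and $\psi(t) = \rho([0,t))$ for $t \ge 0$. Both are non-decreasing and vanish at $0$, with $\varphi$ càdlàg and $\psi$ left-continuous; moreover $\rho((s,t]) = \varphi(t) - \varphi(s)$ and $\rho([s,t)) = \psi(t) - \psi(s)$ for $0 \le s \le t$. Hence $\Gamma^r(t,s) = G(\varphi(t) - \varphi(s))$ and $\Gamma^\ell(t,s) = G(\psi(t) - \psi(s))$, so each double kernel becomes a convolution kernel in the time-changed variable.

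\emph{Step 2 (pushforward of the test sequence).} I focus on $\Gamma^r$; the argument for $\Gamma^\ell$ is identical with $\psi$ replacing $\varphi$. Fix $0 \le t_1 < \dots < t_K$ and scalars $(x_k)$ satisfying the hypothesis of Definition~\ref{def:positivity} for $\Gamma^r$, and set $u_k = \varphi(t_k)$. The sequence $u_1 \le \dots \le u_K$ is non-decreasing, possibly with ties, and the hypothesis becomes $\sum_{k'=1}^k x_{k'} G(u_k - u_{k'}) \ge 0$ for every $k$. For any $t \ge 0$, letting $v = \varphi(t)$ and $m = \max\{k : t_k \le t\}$, monotonicity of $\varphi$ gives $v \ge u_m$, so the conclusion reduces to $\sum_{k=1}^m x_k G(v - u_k) \ge 0$.

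\emph{Step 3 (concluding via the convolution-preserving property of $G$).} When the $u_k$'s are strictly increasing, the conclusion follows immediately from the convolution-preserving property of $G$. The \emph{main obstacle} is the possible ties among the $u_k$'s, arising from intervals of zero $\rho$-mass: in that case $\{k : t_k \le t\}$ may be a proper subset of $\{k : u_k \le v\}$, so a naive pushforward to the convolution setting would overcount. To resolve this, group consecutive indices sharing the same $u$-value into blocks $I_1, \dots, I_L$ with strictly increasing values $v_1 < \dots < v_L$ and aggregated weights $y_l = \sum_{k \in I_l} x_k$. When $m$ is a block endpoint, apply the convolution-preserving property of $G$ to $(v_1, \dots, v_l)$ with weights $(y_1, \dots, y_l)$; the required hypothesis at level $l' \le l$ is exactly the original hypothesis at the last index of block $l'$. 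When $m$ lies strictly inside a block $l^*$, replace $y_{l^*}$ by the partial sum $S = \sum_{k \in I_{l^*},\, k \le m} x_k$: the original hypothesis at level $m$ coincides with the convolution hypothesis at the last time of the modified sequence $(y_1, \dots, y_{l^* - 1}, S)$ at times $(v_1, \dots, v_{l^*})$, and another application of the convolution-preserving property of $G$ concludes.
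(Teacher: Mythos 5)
Your proof is correct, but it follows a genuinely different route from the paper's. The paper argues through its characterization of nonnegativity preservation (Theorem~\ref{thm_char_pos}): it introduces the iterated functionals $G_l$ of the convolution kernel, invokes the convolution characterization of \cite{Alfonsi23} to get $G_l\ge 0$, extends these functionals to vanishing increments, and proves by induction the identity $\Gamma^r_l(s_l,\dots,s_1)=G_{l-1}\bigl(\rho((s_{l-1},s_l]),\dots,\rho((s_1,s_2])\bigr)$, from which Theorem~\ref{thm_char_pos} concludes. You instead work directly from the definitions: the time change $u_k=\varphi(t_k)$ with $\varphi(t)=\rho((0,t])$ turns $\Gamma^r$ into a convolution expression, and the only obstruction --- ties $u_k=u_{k'}$ coming from intervals of zero $\rho$-mass, which can make $\{k:t_k\le t\}$ strictly smaller than $\{k:u_k\le \varphi(t)\}$ --- is handled by aggregating weights within blocks of equal $u$-value and truncating the last block at $m=\max\{k:t_k\le t\}$; the hypotheses of the convolution property for the aggregated family are exactly the original hypotheses at the block-end indices (resp.\ at $m$ for the truncated block), so the assumed property of $G$ applies and its conclusion evaluated at $v=\varphi(t)$ is precisely the desired inequality (in the interior-of-block case one even has $v=u_m$, so the conclusion is already the hypothesis at index $m$). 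Your argument is more elementary and self-contained: it uses neither Theorem~\ref{thm_char_pos} nor the iterated functionals, and it does not actually need $G(0)>0$, which the paper requires to apply its characterization since $\Gamma^r(s,s)=G(0)$. What the paper's route buys is the explicit identity between the iterated kernels of $\Gamma^r$ and those of $G$ (your tie-handling plays exactly the role of the paper's extension of $G_l$ to zero arguments), a structural fact consistent with the machinery it develops and reuses throughout Section~\ref{Sec_kernels}. One cosmetic remark: your outline announces that one could ``equivalently apply Theorem~\ref{thm_char_pos}'', but your execution never uses it; the direct argument stands on its own.
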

The convolution kernels $G(t)=\sum_{i=1}^n\lambda_i e^{-\rho_i t}$ with $n\ge 1$, $\omega_i> 0$ and $0\le \lambda_1< \dots <\lambda_n$ or $G(t)=c(t + \varepsilon )^{H-1/2}$ with $c,\varepsilon>0$ and $H\in(0,1/2)$ satisfy the assumption of Corollary~\ref{cor_convolution} by~\cite[Theorem 2.1]{Alfonsi23}. Therefore, the double kernels $\sum_{i=1}^n \omega_i\,e^{-\lambda_i\,\rho((s,t])}$ or $c(\rho((s,t]) + \varepsilon)^{H-1/2}$ preserve nonnegativity. If, moreover, $\rho( \ud u)=f(u)\,\ud u$ where $f:\R_+^*\to\R$ is locally integrable, they also satisfy Assumption~\ref{ass:preserving_nonnegativity}.

\subsubsection{Completely monotone double kernels}
A function $K:\R_+^*\to \R_+$ is said to be \emph{completely monotone} if $K \in C^{\infty}(\R_+^*,\R_+)$ such that $(-1)^n\,K^{(n)}\geq0$ for every $n\geq0$. By Bernstein's theorem, this is equivalent to the existence of a Borel measure $\theta$ on $\R_+^*$ finite on compact sets such that $K(t) = \int_0^{+\infty} {e^{-\alpha t}\,\theta(\ud \alpha)}$ for all $t\geq0$. Here, we propose the following generalization of completely monotone functions to double kernels.
\begin{definition}\label{def:completely_monotone_double}
	Let $\Gamma:\mathring{\Delta} \to \R_+$. $\Gamma$ is said to be a \emph{completely monotone double kernel} if there exist 
	\begin{enumerate}
		\item[(i)] a Borel measure $\mu$ on $\R$ finite on compact sets;
		\item[(ii)] a family $(\rho(\alpha,\cdot))_{\alpha\in\R}$ of Borel measures on $\R_+$, finite on compact sets and such that for all $\alpha\leq\beta$, $\rho(\beta, \cdot) - \rho(\alpha, \cdot)$ is a non-negative measure,
	\end{enumerate}  
	such that 
	\begin{equation}\label{eq:Gammamu}
		\Gamma(t,s) = \int_{\R} {e^{-\rho(\alpha, (s,t])}\,\mu(\ud\alpha)} \quad \text{ or } \quad \Gamma(t,s) = \int_{\R} {e^{-\rho(\alpha, [s,t))}\,\mu(\ud\alpha)}, \quad (t,s)\in\mathring{\Delta},
	\end{equation}
\end{definition}
\noindent and $\Gamma(t,s)<\infty$ for $t>s$.  When $\mu(\R)<\infty$, a completely monotone double kernel can be extended on $\Delta$ by taking $\Gamma(t,t)=\mu(\R)$.  

\begin{theorem}\label{thm:completely_monotone_double}
	Let $\Gamma:\Delta \to \R_+$ be a completely monotone double kernel in the sense of Definition \ref{def:completely_monotone_double} such that $0 < \mu(\R) < +\infty$. Then, $\Gamma$ preserves nonnegativity.
\end{theorem}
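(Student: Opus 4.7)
The plan is to reduce the problem by approximation and then exploit the specific structure of completely monotone double kernels together with the characterization of nonnegativity-preserving kernels developed in Section~\ref{Sec_kernels}. As a preliminary step, I would approximate the finite positive measure $\mu$ in total variation by discrete measures $\mu_n=\sum_{i=1}^{n}\omega_i^{(n)}\delta_{\alpha_i^{(n)}}$ (with $\omega_i^{(n)}>0$). Dominated convergence, applied via $e^{-\rho(\alpha,(s,t])}\in[0,1]$ and $\mu_n(\R)\to\mu(\R)$, yields pointwise convergence of the associated kernels $\Gamma_n$ to $\Gamma$ on $\mathring\Delta$. Since the inequalities defining the preservation of nonnegativity in Definition~\ref{def:positivity} are closed under pointwise limits, it suffices to establish the result for $\mu$ with finite support.

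In this discrete case, $\Gamma$ is a positive combination of exponential double kernels $K_{\alpha_i}(t,s)=e^{-\rho(\alpha_i,(s,t])}$, each of which preserves nonnegativity by Example~\ref{ex:exp} (taking $b\equiv c\equiv 1$ and $\rho=\rho(\alpha_i,\cdot)$). However, since positive mixtures of nonnegativity-preserving kernels do not automatically inherit the property, the monotonicity of $(\rho(\alpha,\cdot))_{\alpha\in\R}$ in $\alpha$ must enter the argument in an essential way. The key algebraic identity I would exploit is the semigroup factorization: for any $0\le t_1<\cdots<t_K$ and any $t\in[t_k,t_{k+1})$,
\begin{equation*}
\sum_{k'=1}^{k} x_{k'}\Gamma(t,t_{k'}) \;=\; \int_\R e^{-\rho(\alpha,(t_k,t])}\,\Phi_k(\alpha)\,\mu(\ud\alpha), \qquad \Phi_k(\alpha):=\sum_{k'=1}^{k}x_{k'}\,e^{-\rho(\alpha,(t_{k'},t_k])},
\end{equation*}
where the weight $\alpha\mapsto e^{-\rho(\alpha,(t_k,t])}$ is nonincreasing on $\R$ with values in $(0,1]$, and the hypothesis of Definition~\ref{def:positivity} translates into $\int_\R\Phi_j(\alpha)\,\mu(\ud\alpha)\ge 0$ for every $j=1,\dots,K$.

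To convert this into the desired inequality, the plan is to apply the characterization of nonnegativity-preserving double kernels provided by Theorem~\ref{thm_char_pos}, by constructing an explicit non-convolution resolvent of the first kind for $\Gamma$ from the Bernstein representation; in the special case $\rho(\alpha,\ud u)=\alpha\,\rho(\ud u)$ this reduces to the convolution kernel $G(u)=\int_\R e^{-\alpha u}\mu(\ud\alpha)$ treated by Corollary~\ref{cor_convolution}, and the same time-change idea, averaged in $\alpha$, should produce the required resolvent in general. The main obstacle is precisely this last step: passing from the moment inequalities $\int\Phi_j\,\ud\mu\ge 0$ to a weighted inequality with the nonincreasing weight $e^{-\rho(\alpha,(t_k,t])}$ is not elementary and genuinely requires the simultaneous monotonicity in $\alpha$ of the family $(\rho(\alpha,\cdot))$. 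The decisive technical tool I would deploy is a layer-cake representation of the weight as an integral of indicators of left half-lines $\{\alpha\le\beta\}$, which together with the characterization of Theorem~\ref{thm_char_pos} reduces the problem to a collection of convolution-type positivity statements that can be resolved using the classical completely monotone convolution theory recalled in Corollary~\ref{cor_convolution}.
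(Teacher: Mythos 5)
You have correctly identified the structural objects that drive the paper's own argument — the $\alpha$-wise sums $\Phi_k(\alpha)=\sum_{k'\le k}x_{k'}e^{-\rho(\alpha,(t_{k'},t_k])}$, the fact that the extra weight $e^{-\rho(\alpha,(t_k,t])}$ is nonincreasing in $\alpha$, and the essential role of the simultaneous monotonicity of $(\rho(\alpha,\cdot))_\alpha$ — but the step you yourself flag as ``the main obstacle'' is exactly where the proof is missing, and the tools you invoke to close it do not suffice. The layer-cake decomposition of the weight into indicators of left half-lines reduces the claim to showing $\int_{\{\alpha\le\beta\}}\Phi_k(\alpha)\,\mu(\ud\alpha)\ge 0$ for all $\beta$, and this is \emph{not} a consequence of the moment inequalities $\int_\R\Phi_j\,\ud\mu\ge 0$, $j\le k$: one needs an additional structural fact, namely that $\alpha\mapsto\Phi_k(\alpha)$ changes sign at most once, from $+$ to $-$ (equivalently, $\{\Phi_k>0\}$ is a left half-line on which $\Phi_k$ is nonincreasing). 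The paper obtains precisely this by first reducing, via the second statement of Lemma~\ref{lem_char_pos} and Theorem~\ref{thm_char_pos}, to the extremal sequence $x_1=1$, $x_k=-\frac{1}{\Gamma(t_k,t_k)}\sum_{k'<k}x_{k'}\Gamma(t_k,t_{k'})$ (so that all partial sums vanish), and then running an induction on $k$ that propagates the half-line/monotonicity structure of $\alpha\mapsto X^\alpha_{t_k}$ together with $x_k\le 0$. Your proposal never establishes this sign-change property, and your fallback — ``convolution-type positivity statements resolved using Corollary~\ref{cor_convolution}'' or an ``explicit non-convolution resolvent of the first kind'' — cannot deliver it: Corollary~\ref{cor_convolution} only covers the case where $\rho(\alpha,\cdot)$ depends linearly on $\alpha$ (the paper explicitly notes that Theorem~\ref{thm:completely_monotone_double} goes strictly beyond what Corollary~\ref{cor_convolution} yields), and the paper's framework deliberately avoids resolvents of the first kind for non-convolution kernels, for which no such theory is available.

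The preliminary reduction to finitely supported $\mu$ is also problematic and, in any case, buys you nothing. Total-variation approximation of $\mu$ by discrete measures is impossible whenever $\mu$ has an atomless part (the two measures are then mutually singular), so the stated approximation scheme fails; weak convergence could be used instead, but then pointwise convergence of $\Gamma_n$ to $\Gamma$ requires controlling the (at most countable) discontinuities in $\alpha$ of $e^{-\rho(\alpha,(s,t])}$ against the atoms of $\mu$, uniformly over $(s,t)$. Moreover, the property of Definition~\ref{def:positivity} is an implication whose hypothesis also changes along the approximation, so it is not ``closed under pointwise limits'' as stated; a correct closure argument would itself have to go through the algebraic characterization of Theorem~\ref{thm_char_pos} (the functions $\Gamma_l$ depend pointwise-continuously on $\Gamma$ when the diagonal stays positive). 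Even granting all this, the discrete case is not easier: a finite positive mixture $\sum_i\omega_ie^{-\rho(\alpha_i,(s,t])}$ with $\rho(\alpha_1,\cdot)\le\cdots\le\rho(\alpha_n,\cdot)$ already contains the full difficulty, as you acknowledge. In short, the proposal sets up the right quantities but the decisive inductive argument — the one the paper supplies — is absent, so the proof is incomplete.
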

Let us note that we already know from~\cite[Theorem 2.11]{Alfonsi23} that completely monotone convolution kernels preserves nonnegativity. By Bernstein's theorem, every completely monotone function can be written as $t\mapsto \int_0^\infty e^{-\alpha t} \mu(\ud \alpha)$ for some Borel measure $\mu$ on $\R_+$ finite on compact sets. Using Corollary~\ref{cor_convolution}, we get that $(t,s)\mapsto \int_0^\infty e^{-\alpha \rho([s,t))} \mu(\ud \alpha)$ and $(t,s)\mapsto \int_0^\infty e^{-\alpha \rho((s,t])} \mu(\ud \alpha)$ preserve nonnegativity for any Borel measure $\rho$ on $\R_+$ finite on compact sets. Theorem~\ref{thm:completely_monotone_double}
 thus extends this result to a family of Borel measures $\rho(\alpha,\ud x)$ that may not depend linearly on~$\alpha$. Its proof is postponed to Section~\ref{Sec_kernels}. 

\begin{prop}\label{prop_approx_nonnneg}
    Let $\Gamma(t,s)= \int_{\R} {e^{-\rho(\alpha, (s,t])}\,\mu(\ud\alpha)}$ be a completely monotone double kernel as in Definition~\ref{def:completely_monotone_double} with a family of atomless Borel measure $\rho(\alpha,\cdot)$. If $\Gamma$ satisfies Assumption~\ref{ass:conditions1_kernel}, then it also satisfies Assumption~\ref{ass:conditions_kernel_limweak} with the approximating family 
    $$\Gamma_M(t,s)= \int_{[-M,M]} {e^{-\rho(\alpha, (s,t])}\,\mu(\ud\alpha)}, \ M \in \N^*.$$
\end{prop}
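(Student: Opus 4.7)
The plan is to verify, in turn, the three requirements of Assumption~\ref{ass:conditions_kernel_limweak}: that each $\Gamma_M$ satisfies Assumption~\ref{ass:preserving_nonnegativity}, that the $L^2$-convergence \eqref{eq:L2convergence} holds, and that the uniform estimate \eqref{eq:GammaMestimate} is inherited from $\Gamma$. The core observation is that restricting integration to $[-M,M]$ preserves the completely monotone structure with finite total mass $\mu([-M,M])$, and that the resulting integrand is pointwise dominated by that of $\Gamma$.

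For Assumption~\ref{ass:preserving_nonnegativity}, I would first take $M$ large enough so that $\mu([-M,M])>0$ (otherwise $\mu(\R)=0$ and $\Gamma\equiv 0$, in which case there is nothing to prove). Then $\Gamma_M(s,s)=\mu([-M,M])\in(0,+\infty)$. Continuity of $\Gamma_M$ on $\Delta$ follows from dominated convergence: for a sequence $(t_n,s_n)\to(t,s)$ in $\Delta$, atomlessness of each $\rho(\alpha,\cdot)$ gives $\rho(\alpha,(s_n,t_n])\to\rho(\alpha,(s,t])$ (a standard argument, using that one can shrink a neighborhood of $\{s,t\}$ to have arbitrarily small $\rho(\alpha,\cdot)$-mass), while the integrand is uniformly bounded by $1$ and integrable against the finite measure $\mu|_{[-M,M]}$. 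The monotonicity of $t\mapsto\Gamma_M(t,s)$ on $[s,+\infty)$ is immediate from the monotonicity of $t\mapsto\rho(\alpha,(s,t])$. Finally, preservation of nonnegativity follows by applying Theorem~\ref{thm:completely_monotone_double} to the pair $(\mu|_{[-M,M]},\rho)$, which defines a completely monotone double kernel with finite, strictly positive total mass.

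For the $L^2$-convergence and the uniform estimate I would exploit the pointwise domination $0\leq\Gamma_M(t,s)\leq\Gamma(t,s)$, valid since the integrand is nonnegative and $[-M,M]\subseteq\R$. By monotone convergence, $\Gamma_M(t,s)\nearrow\Gamma(t,s)$ pointwise on $\mathring{\Delta}$, so dominated convergence with dominating function $\Gamma(t,\cdot)^2$ (integrable by Assumption~\ref{ass:conditions1_kernel}) yields \eqref{eq:L2convergence}. For the estimate \eqref{eq:GammaMestimate}, the identity
$$\Gamma_M(s,u)-\Gamma_M(t,u) \;=\; \int_{[-M,M]}e^{-\rho(\alpha,(u,s])}\bigl(1-e^{-\rho(\alpha,(s,t])}\bigr)\,\mu(\ud\alpha)\;\geq\;0$$
shows that the integrand is pointwise nonnegative; the same identity with $[-M,M]$ replaced by $\R$ holds for $\Gamma(s,u)-\Gamma(t,u)\geq 0$. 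Enlarging the integration domain therefore gives $|\Gamma_M(t,u)-\Gamma_M(s,u)|\leq|\Gamma(t,u)-\Gamma(s,u)|$, while $\Gamma_M(t,u)\leq\Gamma(t,u)$. Squaring and integrating transfers the bound of Assumption~\ref{ass:conditions1_kernel} for $\Gamma$ to $\Gamma_M$ with identical constants $\eta,\gamma$. The only mildly delicate point is the continuity of $\Gamma_M$ at diagonal points, where atomlessness of the measures $\rho(\alpha,\cdot)$ is essential; the remaining steps are structural.
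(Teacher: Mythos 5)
Your proof is correct and follows essentially the same route as the paper's: atomlessness plus dominated convergence for continuity, Theorem~\ref{thm:completely_monotone_double} applied to $\mu|_{[-M,M]}$ for nonnegativity preservation, the pointwise dominations $0\le\Gamma_M(t,u)\le\Gamma(t,u)$ and $0\le\Gamma_M(s,u)-\Gamma_M(t,u)\le\Gamma(s,u)-\Gamma(t,u)$ to transfer \eqref{eq:GammaMestimate} with the same constants, and monotone plus dominated convergence for \eqref{eq:L2convergence}. The product identity you use for the increment is just the paper's monotonicity inequality written via additivity of $\rho(\alpha,\cdot)$ on disjoint intervals, so there is no substantive difference.
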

\begin{proof}
First, let us note that the atomless property ensures that $(t,s) \mapsto \rho(\alpha, (s,t])=\rho(\alpha, (s,t))$ is continuous for all $\alpha \in \R$, and therefore the dominated convergence theorem gives the continuity of~$\Gamma$ and $\Gamma_M$ on $\mathring{\Delta}$. Combined with Theorem~\ref{thm:completely_monotone_double},  we get that $\Gamma_M$ satisfies Assumption~\ref{ass:preserving_nonnegativity} since $0<\mu([-M,M])<\infty$ for $M$ large enough ($0<\mu([-M,M])$ ensures that $\Gamma_M(t,t)>0$).
We now observe that we have 
$0\le e^{-\rho(\alpha,(s,t))}-e^{-\rho(\alpha,(u,t))}$ for $0<u<s<t$. We thus get
$$0\le \Gamma_M(s,u)-\Gamma_M(t,u) \le \Gamma(s,u)-\Gamma(t,u) \text{ and } 0\le \Gamma_M(s,u)\le \Gamma(s,u).$$
Combining these inequalities with Assumption~\ref{ass:conditions1_kernel}, we get~\eqref{eq:GammaMestimate}. 

By Assumption~\ref{ass:conditions1_kernel}, $\int_0^t \Gamma(t,s)^2 \ud s <\infty$. Since $0\le \Gamma(t,s)-\Gamma_M(t,s)\le \Gamma(t,s)$ and $\Gamma_M(t,s) \to \Gamma(t,s)$ by the monotone convergence theorem, we get~\eqref{eq:L2convergence} by the dominated convergence theorem.
\end{proof}

\subsection{Examples of domains $\mathscr{C}$ and coefficients $(b,\sigma)$}

We collect in Appendix~\ref{A:invarianceSDE} several characterizations of invariance for SDEs, providing explicit conditions on the coefficients $b$ and $\sigma$ to ensure \eqref{WSI_lambda}.  
In particular, we list specific examples of convex domains, describe the behavior of the coefficients $(b, \sigma)$, and highlight the continuity and potential unboundedness of the kernel $\Gamma$ on the diagonal.  

\begin{enumerate}
    \item \textbf{Non-negative orthant  $\mathscr{C} = \mathbb{R}^d_+$:} For $d = 1$, \eqref{WSI_lambda} holds for any $\lambda > 0$ if $b(0) \geq 0$ and $\sigma(0) = 0$. More generally, for $\mathscr{C} = \mathbb{R}_+^d$, \eqref{WSI_lambda} holds for any $\lambda \in \Lambda_T$ if:
\begin{align}\label{eq:bsigmaRd+}
\text{For any } i \in \{1, \dots, d\}, \text{ $x_i = 0$ implies $b_i(x) \geq 0$ and $\sigma_i(x) = 0, \quad  x \in \mathbb R^d_+$,}    
\end{align}
where $\sigma_i(x)$ is the $i$th row of $\sigma(x)$.  This covers and extends \cite[Theorem 3.4]{AJLP19} derived for convolution kernels and continuous coefficients $(b,\sigma)$ with linear growth. 

\item \textbf{Non-negative symmetric matrices $\mathscr{C} = \mathbb{S}^d_+$:}  
    Using the vectorization operator, the problem can be reduced to $\mathbb{R}^{d^2}$.
    In particular, the affine case is treated more in details in Section~\ref{S:squareroot}. This is valid for continuous kernels $\Gamma$ on $\Delta_T$ as well as unbounded  kernels on the diagonal. 
 In the affine case, Theorem~\ref{thm_maininvariance} establishes the weak existence of an $\mathbb{S}^d_+$-valued solution $X$ to the stochastic Volterra Wishart-type equation:
    \begin{align}
        X_t &= X_0 + \int_0^t \Gamma(t,s) \left( \alpha + M X_s + X_s M^\top  \right) \ud s \\
        &\quad + \int_0^t \Gamma(t,s) \left( \sqrt{X_s} \ud B_s Q + Q^\top \ud B_s^\top \sqrt{X_s} \right),
    \end{align}
    where $B$ is a $d\times d$-matrix Brownian motion, $Q, M$ are $d\times d$-matrices, $X_0 \in \mathbb{S}^d_+$, and the $d\times d$-matrix $\alpha$ satisfies  
    \begin{align}\label{eq:condalpha}
        \alpha - \sup_{t\leq T}\Gamma(t,t)(d-1)QQ^\top \in \mathbb{S}^d_+,
    \end{align}
    ensuring that \eqref{WSI_lambda} holds,  see \cite[Theorem 2.4 and Condition (2.4)]{cuchiero2011affine}, but only for continuous kernels $\Gamma$ on $\Delta_T$.  
    If the kernel $\Gamma$ is unbounded on the diagonal, condition \eqref{eq:condalpha} cannot hold.  
    In the context of Volterra equations, this result is new, even for convolution kernels.  

    \item \textbf{Extension to non-convolution cases:}  
    Our results also allow to  extend to the non-convolution case other (polynomial) Volterra processes with possibly unbounded kernels that remain confined to the unit ball or compact intervals, as derived in \cite[Theorem 2.7 and Corollary 2.8]{abi2024polynomial}.  For example,  Theorem~\ref{thm_maininvariance} gives the weak existence of a $[0,1]$-valued solution $X$ to the stochastic Volterra Wright-Fisher-type equation:
    $$X_t=X_0+\int_0^t \Gamma(t,s) (a+bX_s) \ud s +\int_0^t \Gamma(t,s) \sigma \sqrt{X_s(1-X_s)} \ud B_s, $$
    with $\sigma>0$, $X_0\in[0,1]$, $a\ge 0$ and $a+b\le 0$.
\end{enumerate}
 {Of course, this list of examples is not exhaustive and we refer to~\cite{SpVe} and~\cite{CKRMT} for further examples of symmetric cones and polyhedral state space, on which we may define SVEs. }

 {\subsection{Link with multifactor Markovian lifts.}\label{S:lift}
For a double kernel $\Gamma$ completely monotone in the sense of Definition~\ref{def:completely_monotone_double}, of the form
\[
\Gamma(t,s) = \int_{\mathbb{R}} e^{-\rho(\alpha,(s,t])}\,\mu(\mathrm{d}\alpha),
\]
one can lift the stochastic Volterra equation into a (possibly infinite-dimensional) Markovian system. For convolution kernels, this approach has been first implemented for fractional Brownian motion by \cite{carmona1998fractional}  and more recently extended to stochastic Volterra equations by \cite{AJEE19b,CT19,harms2019affine}.\\
Let $X$ be a solution to \eqref{eq:SVE}. Under suitable integrability assumptions on the measure $\mu$, an application of the stochastic Fubini theorem yields the representation
\[
X_t = X_0 + \int_{\mathbb{R}} Y_t^{\alpha}\,\mu(\mathrm{d}\alpha),
\]
where, for each $\alpha \in \mathbb{R}$, the factor process $(Y_t^{\alpha})_{t \geq 0}$ satisfies time-inhomogeneous Markovian dynamics of the form
\begin{equation}\label{eq:yalpha}
\begin{aligned}Y_t^{\alpha} &= \int_0^t e^{-\rho(\alpha,(s,t])} b\left(X_0 + \int_{\mathbb{R}} Y_s^{\alpha}\,\mu(\mathrm{d}\alpha)\right)\,\mathrm{d}s \\
&\quad \quad \quad  \quad + \int_0^t e^{-\rho(\alpha,(s,t])} \sigma\left(X_0 + \int_{\mathbb{R}} Y_s^{\alpha}\,\mu(\mathrm{d}\alpha)\right)\,\mathrm{d}W_s.    
\end{aligned}
\end{equation}
This representation provides a multifactor Markovian lift of the original Volterra dynamics, where the memory is encoded through the continuum of factors indexed by $\alpha$.\\ 
A related question that has attracted growing interest in the literature concerns the invariant domain of the lifted process $(Y_t^{\alpha})_{\alpha \in \mathrm{supp}(\mu)}$. In the case where $\mathcal{C}=\mathbb{R}_+$ and the kernel is of convolution type, \cite{AJEE19b, CT19}  identify invariant domains for $Y$. For discrete measures $\mu$, more concrete descriptions of the resulting state space have recently been obtained by \cite*{abijaber2024state}.  In practice, finite-dimensional approximations  can be obtained using \eqref{eq:yalpha} by discretizing the measure $\mu$, by weighted sums of Dirac measures, leading to tractable multifactor models, as done for instance in \cite{AJEE19a} for convolution kernels.}
\section{Weak existence and Uniqueness of Volterra square-root processes with non-convolution kernels}\label{S:squareroot}
In this section, we establish the weak existence and uniqueness of an affine Volterra square-root process with a non-convolution kernel. This extends the results in \cite*[Section 6]{AJLP19} to the non-convolution case. 

Let $\Gamma$ be  a scalar kernel and a $d$-dimensional vector $b^0$ and a $d\times d$-matrix $B$ such that 
\begin{equation} \label{sqrt2}
b^0 \in \mathbb{R}^d_+ \text{ and } B_{ij} \ge 0 \text{ for } i \ne j.
\end{equation}
We call  {Volterra square-root process} any $\R^d_+$-valued solution of the   equation
\begin{equation} \label{eq:VolSqrt}
\begin{aligned}
X_{i,t} &= X_{i,0} + \int_0^t \Gamma(t,s)\left(b_i^0 + (BX)_{i,s} \right) \ud s  + \int_0^t \Gamma(t,s) \sigma_i \sqrt{X_{i,s}} \ud  W_{i,s}, \quad i = 1, \ldots, d,
\end{aligned}
\end{equation}
where $\sigma_i>0$ and $W$ is a $d$-dimensional Brownian motion. 

The following theorem establishes the weak existence and uniqueness of $\R^d_+$ solutions to~\eqref{eq:VolSqrt}, together with an expression for their Laplace transform
\begin{align}
    \mathbb E \left[ \exp\left( \int_t^T f(s)^\top  X_s \ud  s  \right) \Big|  \mathcal F_t \right], \quad f:[0,T] \to \R^d_-,  
\end{align}
in terms of the following
Riccati--Volterra equation \begin{equation} \label{RicVolSqrt}
\begin{aligned}
\psi(t) &=  \int_t^T \Gamma(s,t) F(s,\psi(s)) \ud  s , \\
F_i(s, \psi) &= \left( f_i(s) + (B^\top \psi)_i + \frac{\sigma_i^2}{2} \psi_i^2 \right),
 \quad i = 1, \ldots, d.
\end{aligned}
\end{equation}

We will need the Volterra kernel $\tilde \Gamma: \Delta_T \to \R_+$  defined by
\begin{align}\label{eq:tildeGamma}
     \tilde \Gamma (t,s) = \Gamma (T-s, T-t), \quad 0\le  s\leq t\le T.  
    \end{align}

\begin{remark}\label{R:riccatifwd}
The Riccati--Volterra \eqref{RicVolSqrt} is written in backward form. It can be re-expressed, using a change of variables,  in the following forward form on $\tilde \psi(t) := \psi({T-t})$:
\begin{align}\label{eq:RiccatiFwd}
    \tilde \psi(t) = \int_0^t \tilde \Gamma(t,s) F(T-s, \tilde \psi(s))  \ud  s, \quad t \leq T,  
\end{align}
    where the Volterra kernel $\tilde \Gamma: \Delta_T \to \R_+$ is defined by \eqref{eq:tildeGamma}.
    In particular, if $\Gamma$ is a convolution kernel of the form $\Gamma(t,s)=1_{s\leq t}K(t-s)$, then, $\tilde \Gamma(t,s)= 1_{s\leq t}K(t-s) =\Gamma(t,s) $ and one recovers from \eqref{eq:RiccatiFwd}, the Riccati-Volterra  \cite[equation (6.3)]{AJLP19}. 
\end{remark}

\begin{theorem} \label{T:VolSqrt}
Fix  $b^0$ and $B$ as in \eqref{sqrt2}, and $T>0$. Assume that $\Gamma$ and $\tilde \Gamma $ defined in \eqref{eq:tildeGamma} satisfy Assumption~\ref{ass:conditions_kernel_limweak}.
\begin{enumerate}
\item\label{T:VolSqrt:1} The stochastic Volterra equation \eqref{eq:VolSqrt} has a unique in law $\mathbb{R}^d_+$-valued continuous weak solution $X$ on $[0,T]$ for any initial condition $X_0 \in \mathbb{R}^d_+$. For each $i$, the paths of $X_i$ are Hölder continuous of any order less than $\gamma$, where $\gamma$ is the constant associated with $\Gamma$ in Assumption \ref{ass:conditions1_kernel}.

\item\label{T:VolSqrt:2} For any $f \in C([0,T], \mathbb{R}^d_-)$ the Riccati--Volterra equation \eqref{RicVolSqrt} has a unique global solution $\psi \in C([0,T], \mathbb{R}^d_-)$, i.e.~ $\psi_i \le 0$, $i = 1, \ldots, d$. Moreover, we have the following exponential-affine transform formula 
\begin{align}\label{eq:laplace}
    \mathbb E \left[ \exp\left( \int_t^T f(s)^\top  X_s  \ud  s  \right) \Big|  \mathcal F_t \right] = \exp\left( \int_t^T F(s, \psi(s))^\top  g_t(s) \ud s \right),
\end{align}
 with 
\begin{align}
    g_t(s) &= g_0(s) + \int_0^t \Gamma(s,r)  \ud  Z_r, \quad  t\leq s, \label{eq:gts}\\
    dZ_{i,t} &= (BX_t)_{i,t}  \ud  t + \sigma_i \sqrt{X_{i,t}}  \ud W_{i,t}, \quad 
    g_0(s) = X_0 + \int_0^s \Gamma (s,r)b^0 \ud  r.  
\end{align}
\end{enumerate}
\end{theorem}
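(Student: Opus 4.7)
The strategy splits naturally into three parts: weak existence, solvability of the Riccati--Volterra equation, and identification of the Laplace transform (which yields uniqueness in law).

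\textbf{Step 1: Weak existence (part \eqref{T:VolSqrt:1}, existence).} The coefficients $b(x)=b^{0}+Bx$ and $\sigma(x)=\mathrm{diag}(\sigma_{i}\sqrt{x_{i}^{+}})$ are continuous with linear growth on $\mathbb R^{d}$, so Assumption~\ref{ass:conditions1_coefficients} is satisfied. For $\mathscr{C}=\mathbb R^{d}_{+}$, the SDE invariance condition \eqref{eq:bsigmaRd+} holds: on $\{x_{i}=0\}\cap\mathbb R^{d}_{+}$ we have $\sigma_{i}(x)=0$ trivially, while $b_{i}(x)=b_{i}^{0}+\sum_{j\neq i}B_{ij}x_{j}\geq 0$ by \eqref{sqrt2}. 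Hence \eqref{WSI_lambda} is satisfied for every $\lambda\in\Lambda_{T}$, and Theorem~\ref{thm_maininvariance} produces a weak continuous $\mathbb R^{d}_{+}$-valued solution $X$ on $[0,T]$ with the stated H\"older regularity.

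\textbf{Step 2: The Riccati--Volterra equation (part \eqref{T:VolSqrt:2}, existence of $\psi$).} Following Remark~\ref{R:riccatifwd}, rewrite \eqref{RicVolSqrt} in forward form as $\tilde\psi(t)=\int_{0}^{t}\tilde\Gamma(t,s)F(T-s,\tilde\psi(s))\,\ud s$ on $[0,T]$, with $\tilde\Gamma$ satisfying Assumption~\ref{ass:conditions_kernel_limweak} by hypothesis. Since $F$ is quadratic in $\psi$, it is locally Lipschitz, and the Hölder regularity of $\tilde\Gamma$ from Assumption~\ref{ass:conditions1_kernel} enables a Picard/fixed-point construction of a local continuous solution. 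To obtain a \emph{global} solution and the sign $\tilde\psi\leq 0$, I would iterate $\tilde\psi^{(0)}\equiv 0$, $\tilde\psi^{(n+1)}(t)=\int_{0}^{t}\tilde\Gamma(t,s)F(T-s,\tilde\psi^{(n)}(s))\,\ud s$, and check by induction, using $f\leq 0$, $\tilde\Gamma\geq 0$, and $B_{ij}\geq 0$ for $i\neq j$, that each $\tilde\psi^{(n)}$ is nonpositive and uniformly bounded on $[0,T]$ (dropping the nonpositive term $(B^{\top}\tilde\psi)$ outside the diagonal and comparing with a linear Volterra equation in the spirit of \cite[Theorem 7.1]{AJLP19}). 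The a~priori bound prevents blow-up, yielding a global solution $\tilde\psi\in C([0,T],\mathbb R^{d}_{-})$, hence $\psi(t):=\tilde\psi(T-t)$ solves \eqref{RicVolSqrt}. Uniqueness among nonpositive solutions follows from the local Lipschitz property of $F$ combined with a Gronwall-type estimate on the Volterra kernel.

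\textbf{Step 3: The affine transform formula (and weak uniqueness).} The idea is to verify directly that $M_{t}:=\exp(Y_{t})$ with
\begin{equation*}
Y_{t}:=\int_{0}^{t}f(s)^{\top}X_{s}\,\ud s+\int_{t}^{T}F(s,\psi(s))^{\top}g_{t}(s)\,\ud s
\end{equation*}
is a bounded $(\mathcal F_{t})$-martingale. First, note from \eqref{eq:gts} that $g_{t}(t)=X_{t}$ coincides with the SVE \eqref{eq:VolSqrt}. Next apply a stochastic Fubini theorem (justified by the $L^{2}$-bounds of Assumption~\ref{ass:conditions1_kernel}) to rewrite
\begin{equation*}
\int_{t}^{T}F(s,\psi(s))^{\top}g_{t}(s)\,\ud s=\int_{t}^{T}F(s,\psi(s))^{\top}g_{0}(s)\,\ud s+\int_{0}^{t}A(t,r)^{\top}\,\ud Z_{r},
\end{equation*}
where $A(t,r):=\int_{t}^{T}\Gamma(s,r)F(s,\psi(s))\,\ud s$ satisfies the crucial identity $A(r,r)=\psi(r)$, which is precisely the Riccati equation \eqref{RicVolSqrt}. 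Differentiating $Y_{t}$ in $t$, the deterministic drifts involving $g_{0}$ and $X_{t}$ combine with the boundary term $A(t,t)=\psi(t)$ from the stochastic integral to give
\begin{equation*}
\ud Y_{t}=\bigl(f(t)^{\top}+\psi(t)^{\top}B-F(t,\psi(t))^{\top}\bigr)X_{t}\,\ud t+\psi(t)^{\top}\sigma\sqrt{X_{t}}\,\ud W_{t},
\end{equation*}
and the bracket in the drift equals exactly $-\tfrac12(\sigma_{i}^{2}\psi_{i}(t)^{2})_{i}$ by definition of $F$. Itô's formula then cancels this drift against the quadratic variation of $Y$, giving $\ud M_{t}=M_{t}\psi(t)^{\top}\sigma\sqrt{X_{t}}\,\ud W_{t}$. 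Since $f\leq 0$, $\psi\leq 0$ and $X\geq 0$ force $Y_{t}\leq 0$, so $M$ is bounded by $1$ and is a true martingale; evaluating $M_{t}=\mathbb E[M_{T}\mid\mathcal F_{t}]$ yields \eqref{eq:laplace}. Finally, \eqref{eq:laplace} applied for all continuous $f\leq 0$ determines the law of $X$ on $[0,T]$ by standard Laplace-transform arguments, giving weak uniqueness in part \eqref{T:VolSqrt:1}.

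\textbf{Main obstacle.} The delicate point is the computation leading to $\ud Y_{t}$: one must carefully apply the stochastic Fubini theorem and differentiate a process $g_{t}(s)$ that is not a classical semimartingale in $s$ (because $\Gamma$ may be singular on the diagonal). The key simplification is that at the boundary $r=t$ the Riccati equation \eqref{RicVolSqrt} produces exactly the integrand $\psi(t)$, so no singular behavior enters the Itô calculation. Rigorous justification requires appropriate $L^{2}$-estimates on $\Gamma$ coming from Assumption~\ref{ass:conditions1_kernel}, and may necessitate an approximation by the regular kernels $\Gamma_{M}$ of Assumption~\ref{ass:conditions_kernel_limweak} followed by passage to the limit.
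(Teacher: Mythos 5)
Your Steps 1 and the It\^o computation in Step 3 follow the paper's route (invariance of $\R^d_+$ via Theorem~\ref{thm_maininvariance}, then the verification argument of Lemma~\ref{L:verification} with $\ud g_t(s)=\Gamma(s,t)\,\ud Z_t$ and the Riccati equation cancelling the drift). However, your justification that $M$ is a \emph{true} martingale is a genuine gap. You claim $Y_t\le 0$ because $f\le 0$, $\psi\le 0$, $X\ge 0$, but the exponent also contains $\int_t^T F(s,\psi(s))^\top g_t(s)\,\ud s$, and neither factor has a sign: $F_i(s,\psi(s))=f_i(s)+(B^\top\psi(s))_i+\tfrac{\sigma_i^2}{2}\psi_i(s)^2$ contains the nonnegative quadratic term and the diagonal contribution $B_{ii}\psi_i(s)$, which is $\ge 0$ whenever $B_{ii}\le 0$ (only the off-diagonal entries of $B$ are sign-constrained in \eqref{sqrt2}); and $g_t(s)=g_0(s)+\int_0^t\Gamma(s,r)\,\ud Z_r$ contains a stochastic integral and is not a conditional expectation of $X_s$, so it can be negative. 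In fact $Y_t\le 0$ is equivalent, a posteriori, to the identity \eqref{eq:laplace} you are trying to prove, so invoking it beforehand is circular. The paper closes this step differently: $M$ is an exponential local martingale, and $\E[M_T]=1$ is obtained by localization and Girsanov (adapting \cite[Lemma 7.3]{AJLP19}): with $\tau_n=\inf\{t:\max_i X_{i,t}\ge n\}$ and $\ud\mathbb{Q}^n/\ud\P=M_{\tau_n\wedge T}$, under $\mathbb{Q}^n$ the process solves an SVE with linear-growth coefficients uniformly in $n$, whence moment bounds give $\mathbb{Q}^n(\tau_n<T)=O(1/n^2)$ and $\E[M_T]\ge \mathbb{Q}^n(\tau_n\ge T)\to 1$, which combined with the supermartingale property yields martingality.

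Step 2 has the same sign problem. Your Picard induction needs each iterate $\tilde\psi^{(n+1)}(t)=\int_0^t\tilde\Gamma(t,s)F(T-s,\tilde\psi^{(n)}(s))\,\ud s$ to stay nonpositive, but $F(T-s,\tilde\psi^{(n)}(s))$ is not pointwise nonpositive (again because of $\tfrac{\sigma_i^2}{2}\psi_i^2$ and the unrestricted diagonal of $B$), so nonpositivity does not propagate just from $\tilde\Gamma\ge 0$; and ``comparing with a linear Volterra equation'' presupposes a comparison principle, which fails for general nonnegative Volterra kernels. This is precisely where the hypothesis that $\tilde\Gamma$ satisfies Assumption~\ref{ass:conditions_kernel_limweak} (nonnegativity preservation) must enter: the paper takes the unique non-continuable solution from Lemma~\ref{L:Riccatilocalexistence}, observes that $\chi=-\tilde\psi$ (and then $\chi=\tilde\psi-\ell$, with $\ell$ the solution of a linear Volterra equation) satisfies a \emph{linear} Volterra equation with continuous coefficients whose off-diagonal structure is inward-pointing for $\R^d_+$ (the awkward terms $\tfrac{\sigma_i^2}{2}\tilde\psi_i\chi_i$ sit on the diagonal, where they are harmless for invariance), and applies the deterministic, time-inhomogeneous version of Theorem~\ref{thm_maininvariance} with $\sigma\equiv 0$ (Lemma~\ref{L:Riccatiinvariance}) to get $\ell_i\le\tilde\psi_i\le 0$ on $[0,T_{\max})$; this a priori bound rules out blow-up. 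So local existence via a fixed point is fine, but both the sign $\psi\le 0$ and the global bound require the invariance machinery for the kernel $\tilde\Gamma$, not a plain induction on Picard iterates.
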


\begin{proof}
    The proof is given in Section~\ref{S:proofsquareroot}. 
\end{proof}

\begin{remark}\label{rem:Gamma_tilde}
Assume that $\Gamma$ and $\tilde \Gamma$ satisfy Assumption~\ref{ass:conditions1_kernel} . 
    If $\Gamma$ is completely monotone in the sense of Definition~\ref{def:completely_monotone_double}, then $\tilde \Gamma$ is   clearly  completely monotone and hence by Proposition~\ref{prop_approx_nonnneg}, both $\Gamma$ and $\tilde \Gamma$ satisfy Assumption~\ref{ass:conditions_kernel_limweak} as required in Theorem~\ref{T:VolSqrt}. 
\end{remark}

In the following example, we provide an application of Theorem \ref{T:VolSqrt} to the generalized fractional kernel, namely setting $G(t) = \frac{t^{\alpha-1}}{\Gamma_e(\alpha)}$ within Example \ref{ex:fractional_double}. In this particular setting, we are able to rewrite the Laplace transform of the associated affine Volterra square-root process by expressing it in terms of the solution of a fractional Riccati equation with time-dependent coefficients. This notably extends the existing expressions in \cite[Example 4.7]{AJLP19} and \cite{EER19}.

\begin{example}\label{ex:fractional_double_bis}
	Let us consider the kernel of Example~\ref{ex:fractional_double} where $G(t) = \frac{t^{\alpha-1}}{\Gamma_e(\alpha)}$ with $\alpha\in(\frac{1}{2},1]$ and $h:\R_+^*\to\R_+^*$ defined therein. 
	Fix $b^0$ and $B$ as in \eqref{sqrt2}, and $T>0$. Then, the SVE
	\begin{equation}\label{eq:VolSqrt_fractional}
	\begin{aligned}
	X_{i,t} &= X_{i,0} + \frac{1}{\Gamma_e(\alpha)}\int_0^t {\biggl(\int_s^t {h(u)\,\ud u}\biggr)^{\alpha-1}\Bigl(\bigl(b_i^0 + (BX)_{i,s} \bigr) \ud s  + \sigma_i \sqrt{X_{i,s}}\,\ud  W_{i,s}\Bigr)}, 
	\end{aligned}
	\end{equation}
	for $i = 1, \ldots, d$, where $\Gamma$ denotes the Euler Gamma function, has a unique in law $\R_+^d$-valued continuous weak solution  on $[0,T]$ for any initial condition $X_0 \in \mathbb{R}^d_+$. Besides, the paths of $X$ are Hölder continuous of any order less than $\beta(\alpha-\frac{1}{2})$, where $\beta$ is the H\"older exponent of $t \mapsto \int_0^t {h(u)\,\ud u}$ on $[0,T]$. \\
	With this particular choice of double kernel, the Riccati-Volterra equation~\eqref{RicVolSqrt} can be expressed by means of a fractional Riccati equation. To do so, we recall the Riemann--Liouville fractional integral (resp. derivative) of $f:\R_+^*\to \R$ of order $r\in(0,1]$ defined by  $I^r f(t) := \frac{1}{\Gamma_e(r)}\int_0^t {(t-s)^{r-1}\,f(s)\, \ud s}$ (resp. $D^rf(t) := \frac{\ud }{\ud t}\,I^{1-r}f(t)$ and $D^1f(t)=f'(t)$) for $t > 0$. Let $\tilde{\psi}$ be defined as in~\eqref{eq:RiccatiFwd} and $\varphi=\tilde{\psi}\circ \xi$, with $\xi:[0,\int_0^T h(u)\,\ud u]\to[0,T]$ being the inverse function of $t\mapsto\int_0^t {h(T-u)\,\ud u}$. Then, we can show by tedious but elementary calculations that $\varphi$ solves  the fractional Riccati equation
		\begin{equation}\label{eq:Riccati_fractional}
			D^{\alpha}\varphi = \tilde{F}(T - \xi, \varphi), \quad I^{1-\alpha}\varphi(0) = 0,
		\end{equation}
		where $\tilde{F} (s,\psi):= \frac{F(s,\psi)}{h(s)}$. Moreover, we have the following exponential-affine transform formula 
		\begin{align}\label{eq:laplace_fraction}
			\mathbb E \left[ \exp\left( \int_0^T f(s)^\top  X_s  \ud  s  \right) \right] = \exp\left( \phi(T) + X_0^\top\,I^{1-\alpha}\varphi\left(\int_0^T {h(u)\,\ud u}\right) \right),
		\end{align}
		where
		\begin{equation*}
			\phi(T) := \int_0^{\int_0^T {h(u)\,\ud u}} \frac 1{h(T-\xi(s))} {\varphi(s)^\top\,b^0\,\ud s}.
		\end{equation*}
\end{example}

\section{Proofs of Theorems~\ref{thm_weak_Rd} and \ref{thm_maininvariance}}\label{S:proofs_main_existence}
\subsection{A-priori estimates}

\begin{lemma}\label{lemma24}
    Let $T>0$. Let $\Gamma$ be a kernel satisfying Assumption~\ref{ass:conditions1_kernel} and $\eta>0$, $\gamma \in(0,1/2]$ denote the corresponding constants on~$[0,T]$. Let
    $(b_t)_{t\ge 0}$ and $(\sigma_t)_{t\ge 0}$ be càdlàg processes taking respectively their values in $\R^d$ and $\mathcal{M}_d(\R)$ such that $\sup_{t\in [0,T]} \E[|b_t|^p+|\sigma_t|^p]<\infty$ for some $p>1/\gamma$. Then, the process
$$Y_t=Y_0+\int_0^t \Gamma(t,s) b_s \ud s +\int_0^t \Gamma(t,s) \sigma_s \ud B_s $$
admits a version that is H\"older continuous on $[0,T]$ of any order $\alpha\in (0,\gamma-1/p)$, and this version satisfies 
$$\E\left[\left(\sup_{0\le s\le t \le T} \frac{|Y_t-Y_s|}{(t-s)^\alpha}\right)^p\right]\le c \sup_{t\in [0,T]} \E[|b_t|^p+|\sigma_t|^p],$$
where $c\in\R_+$ is a constant that only depends on $p$, $\eta$, $\gamma$ and $T$.    
\end{lemma}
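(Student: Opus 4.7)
The plan is to reduce the claim to the Kolmogorov continuity theorem by establishing the moment bound
\begin{equation*}
\E\bigl[|Y_t-Y_s|^p\bigr]\le C(p,\eta,\gamma,T)\,(t-s)^{p\gamma}\,\sup_{u\in[0,T]}\E\bigl[|b_u|^p+|\sigma_u|^p\bigr],\qquad (t,s)\in\Delta_T,
\end{equation*}
together with its quantitative (Garsia--Rodemich--Rumsey) consequence for the modulus of continuity. Note that $\gamma\le 1/2$, so the hypothesis $p>1/\gamma$ forces $p>2$, which allows us to freely apply Burkholder--Davis--Gundy (BDG) and Jensen's inequality with exponent $p/2\ge 1$ below.

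First I would decompose, for $0\le s\le t\le T$,
\begin{align*}
Y_t-Y_s &= \int_s^t\Gamma(t,u)b_u\,\ud u+\int_0^s(\Gamma(t,u)-\Gamma(s,u))b_u\,\ud u\\
&\quad+\int_s^t\Gamma(t,u)\sigma_u\,\ud B_u+\int_0^s(\Gamma(t,u)-\Gamma(s,u))\sigma_u\,\ud B_u,
\end{align*}
and estimate each of the four pieces separately. For the drift terms I would apply Cauchy--Schwarz in $u$ followed by Jensen in probability: for instance,
\begin{equation*}
\Bigl|\int_0^s(\Gamma(t,u)-\Gamma(s,u))b_u\,\ud u\Bigr|^p\le \Bigl(\int_0^s(\Gamma(t,u)-\Gamma(s,u))^2\,\ud u\Bigr)^{p/2}T^{p/2-1}\int_0^T|b_u|^p\,\ud u,
\end{equation*}
so Assumption~\ref{ass:conditions1_kernel} yields a bound proportional to $\eta^{p/2}(t-s)^{p\gamma}T^{p/2}\sup_u\E[|b_u|^p]$, and an analogous (in fact stronger, of order $(t-s)^{p(\gamma+1/2)}$) bound holds for $\int_s^t\Gamma(t,u)b_u\,\ud u$. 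For the stochastic integrals I would invoke BDG and then bound the bracket by Jensen applied to the probability measure $\mu(\ud u)=\Gamma(t,u)^2\,\ud u/\int_s^t\Gamma(t,v)^2\,\ud v$ on $[s,t]$:
\begin{equation*}
\E\Bigl[\Bigl(\int_s^t\Gamma(t,u)^2|\sigma_u|^2\,\ud u\Bigr)^{p/2}\Bigr]\le \Bigl(\int_s^t\Gamma(t,u)^2\,\ud u\Bigr)^{p/2}\sup_{u\in[0,T]}\E[|\sigma_u|^p]\le \eta^{p/2}(t-s)^{p\gamma}\sup_{u\in[0,T]}\E[|\sigma_u|^p],
\end{equation*}
and likewise for the term involving $\Gamma(t,u)-\Gamma(s,u)$ on $[0,s]$. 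Summing the four bounds gives the claimed moment estimate.

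Since $p\gamma>1$, the exponent $p\gamma=1+(p\gamma-1)$ is strictly above the Kolmogorov threshold, so the quantitative Kolmogorov--Chentsov (or GRR) theorem yields a continuous modification and, for every $\alpha<(p\gamma-1)/p=\gamma-1/p$, the bound
\begin{equation*}
\E\Bigl[\Bigl(\sup_{0\le s<t\le T}\frac{|Y_t-Y_s|}{(t-s)^\alpha}\Bigr)^p\Bigr]\le c(p,\eta,\gamma,T,\alpha)\sup_{u\in[0,T]}\E\bigl[|b_u|^p+|\sigma_u|^p\bigr],
\end{equation*}
which is the desired conclusion. The only genuine difficulty is the second step for the stochastic part: one must resist estimating the quadratic variation by $\sup_u|\sigma_u|^2\int_s^t\Gamma(t,u)^2\,\ud u$ (which would waste integrability and fail under our càdlàg hypothesis), and instead treat $\Gamma(t,u)^2\,\ud u$ as a reference measure so that the kernel bound from Assumption~\ref{ass:conditions1_kernel} enters to the power $p/2$, producing the correct $(t-s)^{p\gamma}$ scaling needed for Hölder exponents approaching~$\gamma$.
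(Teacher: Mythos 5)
Your proposal is correct and follows essentially the same route as the paper: the identical four-term decomposition of $Y_t-Y_s$, Jensen/Cauchy--Schwarz for the drift parts and BDG plus Jensen with the normalized measure $\Gamma(t,u)^2\,\ud u$ for the stochastic parts (exactly the argument of \cite[Lemma 2.4]{AJLP19} that the paper invokes), yielding $\E[|Y_t-Y_s|^p]\le c\,(t-s)^{p\gamma}\sup_u\E[|b_u|^p+|\sigma_u|^p]$, followed by the quantitative Kolmogorov criterion. The only difference is that you spell out the details the paper delegates to the cited reference, so nothing further is needed.
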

\begin{proof}
The proof is a straightforward extension of the one of~\cite[Lemma 2.4]{AJLP19}  to double kernels: we first write 
\begin{align*}
    |Y_t-Y_s|^p\le& 4^{p-1}\left|\int_s^t \Gamma(t,u)b_u \ud u \right|^p + 4^{p-1}\left|\int_0^s (\Gamma(t,u)-\Gamma(s,u))b_u \ud u \right|^p \\
    & 4^{p-1}\left|\int_s^t \Gamma(t,u)\sigma_u \ud B_u \right|^p + 4^{p-1}\left|\int_0^s (\Gamma(t,u)-\Gamma(s,u))\sigma_u \ud B_u \right|^p,
\end{align*}
and use the same Jensen and BDG inequalities together with Assumption~\ref{ass:conditions1_kernel} to get
$$\E[|Y_t-Y_s|^p]\le c (t-s)^{p\gamma} \sup_{t\in [0,T]} \E[|b_t|^p+|\sigma_t|^p],$$
with a constant $c\in \R_+$ depending on $p$, $\eta$, $\gamma$ and $T$. The result follows from the Kolmogorov criterion~\cite[Theorem I.2.1]{ReYo}. 
\end{proof}

\begin{lemma}\label{lem:moment_bound}
	Let Assumptions \ref{ass:conditions1_coefficients} and \ref{ass:conditions1_kernel} hold. Let $X$ be a continuous solution of Equation~\eqref{eq:SVE}. Let $T>0$ and $\gamma \in(0,1/2],\eta>0$ the corresponding constants given by Assumption~\ref{ass:conditions1_kernel}.
    Then, for any $p>\frac{1}{\gamma}$, $\alpha\in (0,\gamma-\frac{1}{p})$, there exists $C\in\R_+$ depending only on $|X_0|$, $p$, $T$, $C_{LG}$, $\Gamma$, $\eta$ and $\alpha$ such that
\begin{equation}\label{eq:moment_bound}
	\E\Biggl[\sup_{t\in[0,T]}|X_t|^p +  \sup_{ 0\le s<t\le T}\frac{|X_t - X_s|^p}{|t-s|^{p\,\alpha}}\Biggr] \leq C.
	\end{equation}
    {In particular, $X$ admits on $[0,T]$ a modification with Hölder continuous paths of orders strictly less than $\gamma$.}
\end{lemma}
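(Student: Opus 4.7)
The plan is to first establish the moment bound $\sup_{t\in[0,T]}\E|X_t|^p<\infty$ by a localized non-convolution Volterra--Grönwall argument, and then deduce the Hölder estimate by applying Lemma~\ref{lemma24} to the continuous integrands $b(X_\cdot)$ and $\sigma(X_\cdot)$. Note that the hypotheses $p>1/\gamma$ and $\gamma\leq 1/2$ force $p>2$, so all Jensen-type inequalities below apply. To prevent an a priori vacuous estimate, I would work with the localized process
\[
Y^n_t := X_0 + \int_0^t \Gamma(t,s)\,b(X_s)\,\mathbf{1}_{s<\tau_n}\,\ud s + \int_0^t \Gamma(t,s)\,\sigma(X_s)\,\mathbf{1}_{s<\tau_n}\,\ud B_s,
\]
where $\tau_n := \inf\{t\geq 0 : |X_t|\geq n\}$. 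By continuity of $X$, $\tau_n\nearrow+\infty$ almost surely; moreover $X_t=Y^n_t$ on $\{t\leq\tau_n\}$ (the correction term involves $\mathbf{1}_{s>\tau_n}$ which vanishes for $s\leq t\leq\tau_n$), and on $\{s<\tau_n\}$ both truncated integrands are bounded by $C_{LG}(1+n)$, so Lemma~\ref{lemma24} ensures $\sup_{t\leq T}\E|Y^n_t|^p<\infty$ for each fixed $n$.

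Applying Cauchy--Schwarz to the drift integral, the Burkholder--Davis--Gundy inequality to the stochastic integral, and Jensen's inequality with respect to the probability measure $\Gamma(t,\cdot)^2\ud s/\int_0^t\Gamma(t,u)^2\,\ud u$, together with $\int_0^t\Gamma(t,s)^2\,\ud s\leq\eta T^{2\gamma}$ from Assumption~\ref{ass:conditions1_kernel} and the linear growth of $(b,\sigma)$ combined with the identity $|X_s|\mathbf{1}_{s<\tau_n}=|Y^n_s|\mathbf{1}_{s<\tau_n}$, one obtains constants $C_0,C_1$ depending only on $|X_0|,p,T,C_{LG},\eta,\gamma$ (crucially not on $n$) such that
\[
\E|Y^n_t|^p\leq C_0 + C_1\int_0^t\bigl(1+\Gamma(t,s)^2\bigr)\E|Y^n_s|^p\,\ud s,\qquad t\in[0,T],\ n\in\N.
\]
The main obstacle is converting this into a uniform bound in $n$: the classical resolvent-based Volterra--Grönwall inequality is tailored for convolution kernels and does not directly apply here. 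I would circumvent this by exploiting the local estimate $\int_{t'}^t\Gamma(t,s)^2\,\ud s\leq\eta(t-t')^{2\gamma}$ from Assumption~\ref{ass:conditions1_kernel} and iterating on short intervals. Choose $\delta>0$ so small that $C_1(\delta+\eta\delta^{2\gamma})\leq 1/2$; taking the supremum over $t\in[0,\delta]$ in the above integral inequality then yields $\sup_{t\leq\delta}\E|Y^n_t|^p\leq 2C_0$ uniformly in $n$. On each subsequent interval $[k\delta,(k+1)\delta]$, I would split the integral at $k\delta$: the contribution from $[0,k\delta]$ is controlled by the induction bound and the uniform estimate $\int_0^{k\delta}(1+\Gamma(t,s)^2)\,\ud s\leq k\delta+\eta T^{2\gamma}$, while the contribution from $[k\delta,t]$ produces the contraction factor $C_1(\delta+\eta\delta^{2\gamma})\leq 1/2$ applied to $\sup_{t\leq(k+1)\delta}\E|Y^n_t|^p$. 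After $\lceil T/\delta\rceil$ iterations we obtain a finite constant $C^*$, depending only on $|X_0|,p,T,C_{LG},\eta,\gamma$, such that $\sup_n\sup_{t\leq T}\E|Y^n_t|^p\leq C^*$. Monotone convergence applied to the identity $|X_t|^p\mathbf{1}_{t\leq\tau_n}=|Y^n_t|^p\mathbf{1}_{t\leq\tau_n}$ then gives $\sup_{t\leq T}\E|X_t|^p\leq C^*$.

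With this in hand, the linear growth of $(b,\sigma)$ yields $\sup_{t\leq T}\E[|b(X_t)|^p+|\sigma(X_t)|^p]<\infty$, and Lemma~\ref{lemma24} applied to the continuous integrands $s\mapsto b(X_s)$ and $s\mapsto\sigma(X_s)$ produces a modification of $X$ whose paths on $[0,T]$ are Hölder continuous of every order $\alpha\in(0,\gamma-1/p)$, together with the estimate $\E\bigl[\sup_{0\leq s<t\leq T}|X_t-X_s|^p/(t-s)^{p\alpha}\bigr]\leq C$. The remaining bound on $\E\sup_{t\leq T}|X_t|^p$ then follows from the deterministic inequality $\sup_{t\leq T}|X_t|\leq|X_0|+T^\alpha\sup_{0\leq s<t\leq T}|X_t-X_s|/(t-s)^\alpha$ and the $p$-th power mean inequality, combining the two contributions into the single constant $C$ claimed in \eqref{eq:moment_bound}.
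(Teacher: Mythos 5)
Your proposal is correct, and it reaches the same integral inequality as the paper but then closes the Grönwall step by a genuinely different device. The paper's proof, after localizing with $\tau_n$ exactly as you do (though it works with $\E[\ind_{\{t<\tau_n\}}|X_t|^p]$, whose a priori finiteness is immediate since $\ind_{\{t<\tau_n\}}|X_t|^p\le n^p$, whereas your $Y^n$ formulation needs the small extra appeal to Lemma~\ref{lemma24} that you correctly include), invokes the non-convolution resolvent machinery of \cite{Zhang10}: it constructs the resolvent $R$ of $C\,\Gamma^2$ via \cite[Lemma 2.1]{Zhang10}, applies the Grönwall-type inequality of \cite[Lemma 2.2]{Zhang10}, and then uses Fatou's lemma. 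You instead exploit the quantitative bound $\int_s^t\Gamma(t,u)^2\,\ud u\le\eta(t-s)^{2\gamma}$ from Assumption~\ref{ass:conditions1_kernel} to run a contraction–iteration over intervals of length $\delta$ with $C_1(\delta+\eta\,\delta^{2\gamma})\le 1/2$, splitting the integral at $k\delta$ and absorbing the short-time contribution; this is more elementary and self-contained (no resolvent theory), and it makes transparent that the resulting constant depends on $\Gamma$ only through $(\eta,\gamma)$, hence is uniform over any family of kernels satisfying the same estimate — which is precisely the trick the paper itself uses later in the proof of Lemma~\ref{lemma_stability} to get bounds uniform in $M$, so your route would in fact let that lemma quote this one directly. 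The remaining steps (applying Lemma~\ref{lemma24} to the integrands $b(X_\cdot),\sigma(X_\cdot)$ once $\sup_{t\le T}\E|X_t|^p<\infty$, transferring the Hölder bound to the continuous solution, and controlling $\E[\sup_{t\le T}|X_t|^p]$ via $|X_t|\le|X_0|+T^\alpha\sup_{s<t}|X_t-X_s|/(t-s)^\alpha$) coincide with the paper's.
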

\begin{proof}
	Let $p\geq2$ and $T\in(0,+\infty)$. Since $X$ is continuous and adapted, $\tau_n := \inf\{t\geq 0 : |X_t| \geq n\}$ (with the usual convention $\inf \emptyset=+\infty$) is a stopping time for every $n\geq1$ and such that $\tau_n\to \infty$ almost surely as $n\to \infty$. Thus, a straightforward extension to double kernels of the proof of \cite[Lemma 3.1]{AJLP19} leads to the following inequality:
	\begin{equation*}
	1 + \E[\ind_{\{t < \tau_n\}}|X_t|^p] \leq C + C \,\int_0^t {\Gamma(t,s)^2\,\bigl(1 + \E[\ind_{\{s < \tau_n\}}|X_s|^p]\bigr)\,\ud s},
	\end{equation*}
	for all $t\in[0,T]$ with $C =\max(1+ 3^{p-1}|X_0|^p, C(p)C_{LG}^p  (\eta T^{2\gamma})^{\frac p 2 -1} )$, where $C(p) \in \R_+$ is a constant depending only on~$p$. 
    Here, we have used that $\sup_{t\in [0,T]} \int_0^t \Gamma(t,u)^2 \ud u \le \eta T^{2\gamma}$ by Assumption~\ref{ass:conditions1_kernel}. Still by Assumption \ref{ass:conditions1_kernel} (used this time for $T+1$ instead of~$T$), we have for all $\varepsilon\in(0,1]$,
	\begin{equation*}
	\sup_{t\in[0,T]}\,\int_t^{t+\varepsilon} {\Gamma(t+\varepsilon,s)^2\,\ud s} \leq \tilde{\eta}\,\varepsilon^{\tilde{\gamma}},
	\end{equation*}
	which tends to zero as $\varepsilon\to 0$. \cite[Lemma 2.1]{Zhang10} then gives the existence of the resolvent $R$ of $C\,\Gamma^2$ that satisfies for all $(t,s)\in\Delta_T$,
	$$R(t,s) - C\,\Gamma^2(t,s) = \int_s^t C\,\Gamma^2(t,u)\,R(u,s)\,\ud u = \int_s^t R(t,u)\,C\,\Gamma^2(u,s)\,\ud u.$$ The function $R:\Delta_T \to \R$ is measurable and such that $\sup_{t \in [0,T]} \left| \int_0^tR(t,s) \ud s \right| <\infty$
	Hence, by applying the Gr\"onwall-type inequality given in \cite[Lemma 2.2]{Zhang10}, observing that $t\mapsto1 + \E[\ind_{\{t < \tau_n\}}|X_t|^p]$ is by construction bounded on $[0,T]$, we get
	\begin{equation*}
	1 + \E[\ind_{\{t < \tau_n\}}|X_t|^p] \leq C +\int_0^t C R(t,s) \ud s \le C\left(1 +\sup_{t \in [0,T]} \left| \int_0^tR(t,s) \ud s \right|\right).
	\end{equation*}
	We then use Fatou's lemma to get the finiteness of $1 + \sup_{t\in[0,T]}\E[|X_t|^p]$, for any $p\ge 2$. Thereafter, using the bounds on the moments and Lemma~\ref{lemma24},
	leads to $\E\Bigl[ \sup_{0\le s<t\le T}\frac{|X_t - X_s|^p}{|t-s|^{p\,\alpha}}\Bigr] \leq C$ for any $p>\frac 1 \gamma$ and $\alpha\in (0,\gamma-\frac{1}{p})$. We finally write 
	\begin{equation*}
		|X_t|^p \leq 2^{p-1}\bigl(|X_0|^p+ |X_t - X_0|^p\bigr) \leq 2^{p-1}\Biggl(|X_0|^p+ T^{p\alpha} \frac{|X_t - X_0|^p}{t^{p \alpha }} \Biggr),
	\end{equation*}
	to get the bound on $\E\bigl[\sup_{t \in[0,T]} |X_t|^p\bigr]$.
\end{proof}

\subsection{Uniform estimates for the approximation scheme}\label{Subsec_scheme}
We consider the scheme for $(\hatX^N, \xi^N)$ of Section~\ref{S:scheme}.
Let $\eta(\cdot):[0,T]\to\{0,\cdots,N-1\}$ be such that $\eta(T) := N-1$ and for every $k\in\{0,\cdots,N-1\}$ and for all $t\in[t_k, t_{k+1})$, $\eta(t) := k$.  From~\eqref{eq:hatX}, we now rewrite $\hatX_t^N$ for $t\in[t_k, t_{k+1})$ as  
\begin{align}
\hatX_t^N &= X_0 + \sum_{j=1}^k \int_{t_{j-1}}^{t_j} {\!\Gamma(t, t_j)\bigl(b(\xi_{s}^N)\,\ud s + \sigma(\xi_{s}^N)\,\ud B^N_s\bigr)} \\
&= X_0 + \int_0^{t_{\eta(t)}} {\Gamma(t, t_{\eta(s)+1})\bigl(b(\xi_{s}^N)\,\ud s + \sigma(\xi_{s}^N)\,\ud B^N_s\bigr)}.\label{eq:hatX_bis}
\end{align}

Following Remark~\ref{rk_approx_scheme_Gamma}, we introduce a second scheme that does not require to have $\Gamma(t,t)\not=0$, but does not allow to exploit the nonnegativity preserving property. This scheme is defined,  for $t\in[t_0,t_1)$, by
$\check{X}^N_t=X_0$  and $\check{\xi}^N_t=X_0+\int_{t_0}^{t}(b(\check{\xi}^N_s) \ud s+ \sigma(\check{\xi}^N_s) \ud \check{B}^N_s)$. Then, for $t\in[t_1,t_2)$, we set   $\check{X}^N_{t}=X_0+(\check{\xi}^N_{t_1-}-X_0)\Gamma(t,t_1)=X_0+\Gamma(t,t_1)\int_{t_0}^{t_1}(b(\check{\xi}^N_s) \ud s+ \sigma(\check{\xi}^N_s) \ud \check{B}^N_s)$, and we consider $\check{\xi}^N_t$, a weak solution of 
$$\check{\xi}^N_t=\check{X}^N_{t_2-}+\int_{t_1}^{t}(b(\check{\xi}^N_s) \ud s+ \sigma(\check{\xi}^N_s) \ud \check{B}^N_s), \  t\in[t_1,t_{2}),$$
noting that $\check{X}^N_{t_2-}$ only depends on $(\check{B}^N_s,s\in [0,t_1))$. We construct then inductively the processes $\check{X}^N$ and $\check{\xi}^N$. Suppose that $(\check{X}^N_t,\check{\xi}^N_t)_{t\in [0,t_k)}$ is defined, we then  set 
\begin{equation}\label{def_check_X}
    \check{X}^N_{t}=X_0 +\sum_{j=1}^{k} \Gamma(t,t_j) \int_{t_{j-1}}^{t_j}(b(\check{\xi}^N_s) \ud s+ \sigma(\check{\xi}^N_s) \ud \check{B}^N_s), \ t\in[t_k,t_{k+1}).
\end{equation}
Besides there exists a weak solution $(\check{\xi}^N_t)_{t\in [t_k,t_{k+1})}$ and  a Brownian motion $\check{B}^N$ on $(t_k,t_{k+1})$ such that
\begin{equation}\label{def_check_xi}
    \check{\xi}^N_t=\check{X}^N_{t_{k+1}-}+\int_{t_k}^{t}(b(\check{\xi}^N_s) \ud s+ \sigma(\check{\xi}^N_s) \ud \check{B}^N_s), \  t\in[t_k,t_{k+1}).
\end{equation}
By construction, we have 
\begin{align}
\check{X}_t^N= X_0 + \int_0^{t_{\eta(t)}} {\Gamma(t, t_{\eta(s)+1})\bigl(b(\check{\xi}_{s}^N)\,\ud s + \sigma(\check{\xi}_{s}^N)\,\ud \check{B}^N_s\bigr)},\label{eq:checkX_bis}
\end{align}
which is analogous to~\eqref{eq:hatX_bis}. In contrast, we do not have in general $\check{X}^N_{t_k}=\check{\xi}^N_{t_k-}$, while we have $\hat{X}^N_{t_k}=\xi^N_{t_k-}$. This is the key property to deduce the invariance in a convex set~$\mathscr{C}$ from the scheme $\hat{X}^N$.

Let us denote by $w_{\Gamma,T}(\delta)$, for $\delta>0$, the modulus of continuity of $\Gamma$ over $\Delta_T$ given by
\begin{equation*}
w_{\Gamma,T}(\delta) := \max\bigl\{\bigl|\Gamma(t_1,s_1)-\Gamma(t_2,s_2)\bigr|:(t_1,s_1),(t_2,s_2)\in\Delta_T,\,|s_1-s_2|+|t_1-t_2|\leq\delta\bigr\}.
\end{equation*}
Then, we can establish the following uniform estimates on $(\check{X}^n, \check{\xi}^N)$ and $(\hatX^n, \xi^N)$. 
\begin{lemma}\label{lem:moment_bound_approximation}
	Let $\Gamma : \Delta_T \to \R_+$ be continuous.  Let  us assume that Assumption~\ref{ass:conditions1_coefficients} holds. 
	Let $p\ge 2$. Then, there exist constants $C_p,C \in \R_+$ depending only on $|X_0|$, $C_{LG}$ and $\max_{\Delta_T}\Gamma$ such that
\begin{equation*}
	\sup_{N \geq 1}\,\sup_{t\in[0,T]}\,\E\bigl[|\check{\xi}_t^N|^p + |\check{X}_t^N|^p\bigr] \leq C_p, \quad \sup_{t\in[0,T]}\E\bigl[|\check{\xi}_t^N - \check{X}_t^N|^2\bigr] \leq C\,\Biggl(\frac{T}{N} + w_{\Gamma,T}\biggl(\frac{T}{N}\biggr)^2\Biggr).
	\end{equation*}
    If we assume besides that $\Gamma(s,s)>0$ for all $s \in [ 0,T]$, there exist constants $C_p,C \in \R_+$ depending only on $|X_0|$, $C_{LG}$ and $\max_{\Delta_T}\Gamma$ such that
	\begin{equation*}
	\sup_{N \geq 1}\,\sup_{t\in[0,T]}\,\E\bigl[|\xi_t^N|^p + |\hatX_t^N|^p\bigr] \leq C_p, \quad \sup_{t\in[0,T]}\E\bigl[|\xi_t^N - \hatX_t^N|^2\bigr] \leq C\,\Biggl(\frac{T}{N} + w_{\Gamma,T}\biggl(\frac{T}{N}\biggr)^2\Biggr).
	\end{equation*}
\end{lemma}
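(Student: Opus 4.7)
The plan is to handle both schemes with a single argument, exploiting the observation that the identity $\hat X_{t_j}^N - \hat X_{t_j-}^N = \int_{t_{j-1}}^{t_j} \Gamma(t_j, t_j)\bigl(b(\xi^N_s)\,\ud s + \sigma(\xi^N_s)\,\ud B^N_s\bigr)$ (valid when $\Gamma(t_j,t_j)>0$) turns \eqref{eq:hatX_bis} into
\begin{equation*}
\hat X^N_t = X_0 + \int_0^{t_{\eta(t)}} \Gamma(t, t_{\eta(s)+1})\bigl(b(\xi^N_s)\,\ud s + \sigma(\xi^N_s)\,\ud B^N_s\bigr),
\end{equation*}
which has the same structure as \eqref{eq:checkX_bis} for $\check X^N$. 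The only visible difference between the two schemes is that the SDE for $\xi^N$ on $[t_k,t_{k+1})$ carries an additional scalar factor $\Gamma(t_{k+1},t_{k+1})\in(0,M]$, with $M:=\max_{\Delta_T}\Gamma$. We carry out the estimate for $(\check X^N,\check\xi^N)$; the hat version follows by absorbing $M$ into all constants.

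\emph{Moment bounds.} Set $\Psi(t) := \sup_{u\in[0,t]}\E\bigl[|\check\xi^N_u|^p+|\check X^N_u|^p\bigr]$ for $p\ge 2$. Applying the BDG and Jensen inequalities to \eqref{def_check_xi} on $[t_k,t]$ with $t\in[t_k,t_{k+1})$, and using the linear-growth Assumption~\ref{ass:conditions1_coefficients}, one obtains
\begin{equation*}
\E[|\check\xi^N_t|^p]\le C\,\E\bigl[|\check X^N_{t_{k+1}-}|^p\bigr] + C\int_{t_k}^t \bigl(1+\E[|\check\xi^N_u|^p]\bigr)\,\ud u,
\end{equation*}
with $C$ depending only on $p,T,C_{LG}$; uniformity in $N$ comes from $(T/N)^{p-1}\le T^{p-1}$ and $(T/N)^{p/2-1}\le T^{p/2-1}$. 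A second BDG/Jensen bound on \eqref{eq:checkX_bis}, exploiting $|\Gamma|\le M$, yields $\E[|\check X^N_u|^p]\le C\bigl(1+\int_0^{u}\E[|\check\xi^N_s|^p]\,\ud s\bigr)$ and the same for $\check X^N_{t_{k+1}-}$. Combining these inequalities produces $\Psi(t)\le C+C\int_0^t\Psi(s)\,\ud s$, and Grönwall's lemma closes the loop to give the uniform moment bound.

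\emph{$L^2$ closeness.} For $t\in[t_k,t_{k+1})$, write
\begin{equation*}
\check\xi^N_t - \check X^N_t = \bigl(\check X^N_{t_{k+1}-}-\check X^N_t\bigr) + \int_{t_k}^t \bigl(b(\check\xi^N_s)\,\ud s + \sigma(\check\xi^N_s)\,\ud\check B^N_s\bigr).
\end{equation*}
By BDG, the linear-growth bound, and the previously established moments, the second summand has $L^2$-norm controlled by $C(T/N)^{1/2}$. For the first summand, differencing \eqref{eq:checkX_bis} at times $t_{k+1}-$ and $t$ gives
\begin{equation*}
\check X^N_{t_{k+1}-}-\check X^N_t = \int_0^{t_k}\bigl(\Gamma(t_{k+1},t_{\eta(s)+1})-\Gamma(t,t_{\eta(s)+1})\bigr)\bigl(b(\check\xi^N_s)\,\ud s + \sigma(\check\xi^N_s)\,\ud\check B^N_s\bigr),
\end{equation*}
and since $|t_{k+1}-t|\le T/N$ the kernel increment is uniformly bounded by $w_{\Gamma,T}(T/N)$, so a final BDG/Jensen estimate yields an $L^2$-norm of order $w_{\Gamma,T}(T/N)$. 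Squaring and summing gives the announced inequality, and the hat-version estimate is identical since the extra factor $\Gamma(t_{k+1},t_{k+1})\le M$ is harmless.

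The main subtlety is keeping every constant uniform in $N$ along the BDG/Jensen/Grönwall chain: this relies on the continuity (hence boundedness) of $\Gamma$ on the compact $\Delta_T$, on $T/N\le T$, and on the fact that the Grönwall constant depends only on $T$, $C_{LG}$, and $M$. Once these uniformities are in place, what remains is routine piecewise SDE analysis applied grid interval by grid interval.
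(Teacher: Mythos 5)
Your proof is correct and follows essentially the same route the paper takes (it refers to an extension of \cite[Lemma 3.1]{Alfonsi23}): the representation \eqref{eq:hatX_bis}/\eqref{eq:checkX_bis}, BDG/Jensen plus Gr\"onwall for the uniform $p$-moments, and the splitting of $\check{\xi}^N_t-\check{X}^N_t$ into the kernel-increment term controlled by $w_{\Gamma,T}(T/N)$ and the local SDE increment of order $(T/N)^{1/2}$, with the factor $\Gamma(t_{k+1},t_{k+1})\le \max_{\Delta_T}\Gamma$ absorbed for the hat scheme. The only point worth making explicit is the a priori finiteness of $\sup_{t\le T}\E[|\check{\xi}^N_t|^p]$ needed before applying Gr\"onwall, which follows by induction over the finitely many grid intervals since on each one $\check{\xi}^N$ solves a standard linear-growth SDE started from a variable with finite $p$-th moment.
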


\noindent The proof of Lemma~\ref{lem:moment_bound_approximation} is a straightforward extension  to double kernels and $p$-moments  of the one of \cite[Lemma 3.1]{Alfonsi23}.   Note that we also have $\sup_{t\in[0,T]}\E\bigl[|\xi_t^N - \hatX_t^N|^p\bigr] \leq C\,\Biggl(\left(\frac{T}{N}\right)^{p/2} + w_{\Gamma,T}\biggl(\frac{T}{N}\biggr)^p\Biggr)$ but we do not use this estimate in the paper.     {Let us note also that, under Lipschitz assumptions on $b$ and $\sigma$, we could get the strong rate of convergence $$\sup_{t\in[0,T]}\E\bigl[|X_t - \hatX_t^N|^p\bigr] \leq C\,\Biggl(\left(\frac{T}{N}\right)^{p/2} + w_{\Gamma,T}\biggl(\frac{T}{N}\biggr)^p\Biggr),$$
in the same way as \cite[Proposition 3.2]{Alfonsi23}. }

\subsection{Weak convergence of the scheme for continuous kernels.}

We start by showing weak existence  for continuous kernels $\Gamma$ by weak convergence of the scheme of Section~\ref{S:scheme}.

\begin{lemma}\label{lemma_weak_C0}
	Let $T>0$.  Let $\Gamma:\Delta_T \to \R$ be continuous and  satisfying Assumption~\ref{ass:conditions1_kernel}.  Let $b,\sigma$ satisfy Assumption~\ref{ass:conditions1_coefficients}.  Then, there exists a continuous weak solution to the SVE \eqref{eq:SVE} for any $X_0 \in \R^d$.
	If moreover $\mathscr{C}\subset \R^d$ is closed and convex, $X_0 \in \mathscr{C}$, $\Gamma$ satisfies Assumption~\ref{ass:preserving_nonnegativity} and~\eqref{WSI_lambda} holds for any $\lambda\in \{ \Gamma(t,t), t \in [0,T]\}$,  then, there exists a weak solution to the SVE that is continuous and stays in $\mathscr{C}$, i.e. $\P(X_t \in \mathscr{C}, t\in [0,T])=1$.
\end{lemma}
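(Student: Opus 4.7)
The plan is to build a weak solution as the limit of the approximation scheme of Section~\ref{S:scheme}. In the unconstrained case $\mathscr{C}=\R^d$, where $\Gamma(s,s)$ may vanish, I would work with the variant $(\check X^N,\check \xi^N,\check B^N)$ introduced in Remark~\ref{rk_approx_scheme_Gamma}; in the constrained case, Assumption~\ref{ass:preserving_nonnegativity} forces $\Gamma(s,s)>0$, so $(\hatX^N,\xi^N,B^N)$ itself is well defined and, by Lemma~\ref{lem:nonnegative_approx}, already takes values in $\mathscr{C}$ almost surely. The argument is identical in both cases; I describe it for $(\hatX^N,\xi^N,B^N)$. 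Lemma~\ref{lem:moment_bound_approximation} supplies the two essential inputs: uniform-in-$N$ $L^p$-bounds on both $\hatX^N$ and $\xi^N$ for every $p\geq 2$, and the $L^2$-closeness $\sup_{t\in[0,T]}\E[|\hatX^N_t-\xi^N_t|^2]\to 0$ as $N\to\infty$.

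For tightness, starting from the integral representation \eqref{eq:hatX_bis} I would mimic the proof of Lemma~\ref{lemma24}: BDG combined with Assumption~\ref{ass:conditions1_kernel} applied to the kernel values $\Gamma(t,t_{\eta(r)+1})$, together with uniform continuity of $\Gamma$ on the compact set $\Delta_T$ used to compare continuous and discrete second arguments, yields for every $p>1/\gamma$ a Hölder estimate of the form $\E[|\hatX^N_t-\hatX^N_s|^p]\leq C(|t-s|^{p\gamma}+\varepsilon_N)$ valid within each sub-interval $[t_k,t_{k+1})$, with $\varepsilon_N\to 0$. Together with the $L^p$-bounds, this yields tightness of $(\hatX^N,B^N)$ in $D([0,T],\R^d)\times C([0,T],\R^d)$. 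Since $\hatX^N_{t_k}-\hatX^N_{t_k-}=\xi^N_{t_k-}-\hatX^N_{t_k-}$, the $L^2$-closeness above forces the jump sizes of $\hatX^N$ to vanish uniformly, so every weak limit is concentrated on $C([0,T],\R^d)$.

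To identify the limit, Prokhorov yields a convergent subsequence and Skorokhod's representation produces, on a new probability space, $(\hatX^N,\xi^N,B^N)\to(X,X,B)$ uniformly on $[0,T]$ almost surely, with $X$ continuous and $B$ a Brownian motion in its natural filtration. The drift term in \eqref{eq:hatX_bis} converges to $\int_0^t\Gamma(t,s)b(X_s)\ud s$ by uniform continuity of $\Gamma$, continuity of $b$ and dominated convergence via the uniform moment bounds. For the stochastic integral, I would first invoke Itô's isometry to show
\[
\E\Bigl[\Bigl|\int_0^{t_{\eta(t)}}\Gamma(t,t_{\eta(s)+1})\sigma(\xi^N_s)\,\ud B^N_s-\int_0^t \Gamma(t,s)\sigma(X_s)\,\ud B^N_s\Bigr|^2\Bigr]\longrightarrow 0,
\]
using uniform continuity of $\Gamma$, continuity of $\sigma$ and the $L^p$-bounds, and then apply a Kurtz--Protter-type stability result to replace $B^N$ by $B$ in the resulting deterministic-kernel stochastic integral. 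Combining both, $X$ satisfies~\eqref{eq:SVE}. In the constrained case, closedness of $\mathscr{C}$ and $\hatX^N_t\in\mathscr{C}$ almost surely transfer to $X_t\in\mathscr{C}$ for all $t\in[0,T]$ via the uniform almost sure convergence.

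The hard part will be the stochastic-integral step above: the Volterra structure rules out a direct martingale-problem argument, since $t\mapsto\int_0^t\Gamma(t,s)\sigma(X_s)\,\ud B_s$ is not a semimartingale, and $B^N$ and $B$ are realised on a common probability space only after Skorokhod's representation. The argument must therefore proceed by an explicit $L^2$-isometry computation on a single space followed by a Kurtz--Protter passage, with the $L^2$-closeness of $\hatX^N$ and $\xi^N$ from Lemma~\ref{lem:moment_bound_approximation} together with the Hölder control of $\Gamma$ from Assumption~\ref{ass:conditions1_kernel} acting as the pivotal inputs.
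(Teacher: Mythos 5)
Your overall strategy is the paper's (the scheme of Section~\ref{S:scheme}, the bounds of Lemma~\ref{lem:moment_bound_approximation}, a Kurtz--Protter passage), but two steps would not go through as written. First, tightness: a H\"older-type moment bound ``valid within each sub-interval $[t_k,t_{k+1})$ up to an error $\varepsilon_N$'' is not a tightness criterion in $D([0,T])$, and controlling increments of the c\`adl\`ag interpolation $\hatX^N$ across the grid points (where its jumps sit) is exactly what is missing; the $L^2$-smallness of a single jump does not give a uniform modulus over all $N$ grid points without further quantitative work. The paper sidesteps this entirely by introducing the \emph{continuous} auxiliary process $\tilde X^N_t=X_0+\int_0^t\Gamma(t,s)\bigl(b(\xi^N_s)\,\ud s+\sigma(\xi^N_s)\,\ud B^N_s\bigr)$ as in \eqref{def_XtildeN}: by Lemma~\ref{lem:moment_bound_approximation} and Lemma~\ref{lemma24} it satisfies a genuine Kolmogorov--Centsov estimate, hence $(\tilde X^N,B^N)$ is $C$-tight, and it is $L^2$-close to $\hatX^N$ and $\xi^N$ uniformly in $t$; all limiting arguments are then run on $\tilde X^N$, not on $\hatX^N$.

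Second, the identification step has an adaptedness gap. After Skorokhod representation the limit $X$ is measurable with respect to the whole sequence of copies and is in general neither adapted to, nor non-anticipating for, the filtration of any fixed $B^N$; hence $\int_0^t\Gamma(t,s)\sigma(X_s)\,\ud B^N_s$ is not a licit It\^o integral, the proposed isometry computation has no filtration in which $B^N$ is a martingale and the integrand is adapted, and the subsequent ``replace $B^N$ by $B$'' step suffers from the same defect. The paper's order of operations avoids this: it freezes the kernel's first argument at finitely many times $t_1,\dots,t_Q$, applies \cite[Theorem 7.10]{KP} \emph{on the original spaces} to the integrands $b(\tilde X^N),\sigma(\tilde X^N)$ (which are adapted to $B^N$) to get joint weak convergence of $(\tilde X^N,B^N,\tilde Z^{1,N},\dots,\tilde Z^{Q,N})$, then swaps $\tilde X^N$ for $\xi^N$ inside the integrand by an in-probability comparison that requires a truncation $\pi_\gamma$ plus the $p=3$ moment bounds, because $b,\sigma$ are only continuous with linear growth (a point you gloss over when invoking ``continuity of $\sigma$''), and finally upgrades the identity $X_{t_q}=Z^q_{t_q}$ at finitely many times to all $t\in[0,T]$ using the continuity of $X$ and of $t\mapsto\int_0^t\Gamma(t,s)[b(X_s)\,\ud s+\sigma(X_s)\,\ud B_s]$ supplied by Lemma~\ref{lemma24}. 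Your treatment of the constrained case is fine in spirit, but it should likewise be routed through $\P(\xi^N_t\in\mathscr{C},\,t\in[0,T])=1$ (Lemma~\ref{lem:nonnegative_approx}) and the $L^2$-closeness of $\xi^N$ and $\tilde X^N$, closedness of $\mathscr{C}$ then passing to the limit law.
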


\begin{proof}
	We start by proving the result when $\Gamma$ satisfies Assumption~\ref{ass:preserving_nonnegativity}. We first define exactly as in Section~\ref{S:scheme} the processes $(\hatX^N_t)_{t\in [0,T]}$ and $(\xi^N_t)_{t\in [0,T]}$, associated to a Brownian motion $B^N$ on a filtered probability space $(\Omega^N,\cF^N,(\cF^N_t)_{t\in [0,T]},\P^N)$. 
 Let us also define the process 
\begin{equation}\label{def_XtildeN}\tilde{X}^N_t=X_0+\int_0^t \Gamma(t,s) \left(b(\xi^N_s) \ud s+ \sigma(\xi^N_s) \ud B^N_s \right),\ t\in[0,T].\end{equation}
We have 
\begin{align*}
	\hatX^N_t-\tilde{X}^N_t=&\int_0^{\eta(t)}[\Gamma(t,t_{\eta(s)+1})-\Gamma(t,s)][b(\xi^N_s) \ud s + \sigma (\xi^N_s) \ud B^N_s ] \\
	&-\int_{\eta(t)}^t \Gamma(t,s)[b(\xi^N_s) \ud s + \sigma (\xi^N_s) \ud B^N_s ].
\end{align*} By using Jensen inequality, Itô isometry, Lemma~\ref{lem:moment_bound_approximation} and the continuity of $\Gamma$ over $\Delta_T$, we get that there is a constant $C$ depending only on $X_0$, $C_{LG}$, $T$ and $\max_{0\le s\le t\le T} \Gamma(t,s)$ such that for all $N\ge 1$,
	$$\forall t \in[0,T],\ \E[|\hatX^N_t-\tilde{X}^N_t|^2] \le C\left(\frac T N +w_{\Gamma,T}(T/N)^2 \right),$$
	which gives\begin{equation}\label{cv_Xtilde}
		\forall t \in[0,T],\ \E[|\xi^N_t-\tilde{X}^N_t|^2] \le C\left(\frac T N +w_{\Gamma,T}(T/N)^2 \right)
	\end{equation}
	by Lemma~\ref{lem:moment_bound_approximation}.

	By using again Lemma~\ref{lem:moment_bound_approximation},  $\xi^N$ has bounded moments, i.e. $\sup_{t\in [0,T]}\E[|\xi^N_t|^p]\le C_p$	 and then by Lemma~\ref{lemma24}, we get that $\tilde{X}^N$ and thus $(\tilde{X}^N, B^N)$ satisfies the Kolmogorov-Centsov criterion that gives the $C$-tightness, see e.g.~\cite[Problem 2.4.11]{KS}.  We consider a converging subsequence that we still denote $(\tilde{X}^N,B^N)$ to lighten notation and we note $(X,B)$ its limit.
	
Let $Q\in \mathbb{N}^*$, $0\le t_1<\dots< t_Q\le T$ and, for $q\in \{1,\dots,Q\}$, $t\in [0,T]$,
\begin{eqnarray*}
\tilde{Z}^{q,N}_t&=&X_0+\int_0^t \Gamma(t_q,s) [b(\tilde{X}^N_s) \ud s + \sigma (\tilde{X}^N_s) \ud B^N_s]. \\
Z^{q}_t&=&X_0+\int_0^t \Gamma(t_q,s) [b(X_s) \ud s + \sigma (X_s) \ud B_s],
\end{eqnarray*}
with the convention that $\Gamma(t,s)=0$ if $s>t$. By using~\cite[Theorem 7.10]{KP} (observe that $B^N$ satisfies trivially the uniform tighness (UT) assumption of~\cite[Definition 7.4]{KP}), we get that 
$$(\tilde{X}^N,B^N,\tilde{Z}^{1,N},\dots,\tilde{Z}^{Q,N})\implies (X,B,Z^1,\dots,Z^Q),$$
and then in particular\footnote{Note that we use the weak convergence of processes given by~\cite{KP} only to get the weak convergence of some stochastic integrals. However, we have not found a reference for this weaker result.}
\begin{equation}\label{eq_weak1}(\tilde{X}^N,B^N,\tilde{Z}^{1,N}_{t_1},\dots,\tilde{Z}^{Q,N}_{t_Q})\implies (X,B,Z^1_{t_1},\dots,Z^Q_{t_Q}).
\end{equation}

We now define 
$$Z^{q,N}_t=X_0+\int_0^t \Gamma(t_q,s) [b(\xi^N_s) \ud s + \sigma (\xi^N_s) \ud B^N_s].$$
From~\eqref{cv_Xtilde}, we get $\int_0^T \E[|\tilde{X}^N_s-\xi^N_s|^2]ds\to 0$ and we may then assume w.l.o.g. (considering a subsequence) that $\tilde{X}^N_s-\xi^N_s\to 0$ $ds \otimes \P$ almost everywhere. We prove the convergence of $\P( |\tilde{Z}^{q,N}_{t_q}-{Z}^{q,N}_{t_q}|>\epsilon) \to 0$ as $N\to \infty$, for $\epsilon>0$. 
In the particular case where $b$ and $\sigma$ are bounded, we deduce the convergence in probability from Markov inequality, Jensen inequality, Itô isometry and the continuity of $\Gamma$ over $\Delta_T$,
$$\P( |\tilde{Z}^{q,N}_{t_q}-{Z}^{q,N}_{t_q}|>\epsilon)\le \frac 1 {\epsilon^2}\E[|\tilde{Z}^{q,N}_{t_q}-{Z}^{q,N}_{t_q}|^2]\le \frac{2}{\epsilon^2} \E \left[\int_0^{t_q} T(b(\tilde{X}^N_s)-b(\xi^N_s))^2+(\sigma(\tilde{X}^N_s)-\sigma(\xi^N_s))^2 ds \right],$$
and conclude with the dominated convergence theorem. 

In the general case, we know from Lemmas~\ref{lem:moment_bound_approximation} and~\ref{lemma24} that there exists $C\in \R_+$ such that $$\sup_{N\ge 1}\E\left[\sup_{s\in[0,T]}|\tilde{X}^N_s|^2 \right]\le C.$$ 
For $\gamma>0$, we introduce $\tau^{N,\gamma}=\inf \{ t\in[0,T]: |\tilde{X}^N_t|>\gamma \}$ (convention $\inf\emptyset= +\infty$), $b_\gamma(x)=b(\pi_{\gamma}(x))$ , $\sigma_\gamma(x)=b(\pi_{\gamma}(x))$, where $\pi_\gamma(x)=\frac{\gamma}{\gamma\vee |x|}x$ is the projection on the closed ball of radius~$\gamma$. We have $\P(\tau^{N,\gamma}\le T)\le \P(\sup_{s\in[0,T]}|\tilde{X}^N_s|>\gamma) \le \frac{C}{\gamma^2}$. From $\mathbf{1}_{|\tilde{Z}^{q,N}_{t_q}-{Z}^{q,N}_{t_q}|>\epsilon}\le \mathbf{1}_{\tau^{N,\gamma}\le T}+\mathbf{1}_{\tau^{N,\gamma}> T} \frac{|\tilde{Z}^{q,N}_{t_q}-{Z}^{q,N}_{t_q}|^2}{\epsilon^2}$, where we have used that $\mathbf{1}_{|x|>\epsilon}\leq\frac{|x|}{\epsilon}$, we get
$$\P( |\tilde{Z}^{q,N}_{t_q}-{Z}^{q,N}_{t_q}|>\epsilon)\le \frac{C}{\gamma^2}+\frac{1}{\epsilon^2}\E\left[\mathbf{1}_{\tau^{N,\gamma} > T} |\tilde{Z}^{q,N}_{t_q}-{Z}^{q,N}_{t_q}|^2\right].$$
We analyse the second term and have
\begin{align*}
	&\E\left[\mathbf{1}_{\tau^{N,\gamma} > T} |\tilde{Z}^{q,N}_{t_q}-{Z}^{q,N}_{t_q}|^2\right] \\
	&=\E\left[\mathbf{1}_{\tau^{N,\gamma} > T} \left|\int_0^{t_q} \Gamma(t_q,s) [ (b_\gamma(\tilde{X}^N_s) -b(\xi^N_s) ) \ud s + (\sigma_\gamma(\tilde{X}^N_s)-\sigma (\xi^N_s)) \ud B^N_s ] \right|^2\right]\\
	&\le 2 	\E\left[ \left|\int_0^{t_q} \Gamma(t_q,s) [ (b_\gamma(\tilde{X}^N_s) -b_\gamma(\xi^N_s) ) \ud s + (\sigma_\gamma(\tilde{X}^N_s)-\sigma_\gamma (\xi^N_s)) \ud B^N_s ] \right|^2 \right] \\
	&\ + 2 	\E\left[ \left|\int_0^{t_q} \Gamma(t_q,s) [ (b_\gamma(\xi^N_s)-b(\xi^N_s) ) \ud s + (\sigma_\gamma (\xi^N_s)-\sigma(\xi^N_s)) \ud B^N_s ] \right|^2 \right]\\
	&\le 4 \E \left[\int_0^{t_q} T|b_\gamma(\tilde{X}^N_s)-b_\gamma(\xi^N_s)|^2+|\sigma_\gamma(\tilde{X}^N_s)-\sigma_\gamma(\xi^N_s)|^2 ds \right] \\
	&\ +  4 \E \left[\int_0^{t_q} T|b_\gamma(\xi^N_s)-b(\xi^N_s)|^2+|\sigma_\gamma(\xi^N_s)-\sigma(\xi^N_s)|^2 ds \right],
\end{align*}
by using Jensen inequality, It\^o isometry and the continuity of $\Gamma$ over $\Delta_T$. The first term goes to zero as $N\to \infty$ by dominated convergence theorem as $b_\gamma$ and $\sigma_\gamma$ are bounded continuous. Since $|b_\gamma(x)-b(x)|+|\sigma_\gamma(x)-\sigma(x)|\le 2C_{LG}(1+|x|)\mathbf{1}_{|x|>\gamma}$, the second term is upper bounded by 
$$4C_{LG}^2  \int_0^T \E[(1+|\xi^N_s|^2)\mathbf{1}_{|\xi^N_s|>\gamma}] ds \le 4C_{LG}^2 C_3 T \left(\gamma^{-3}+\gamma^{-1}\right),$$
by using Lemma~\ref{lem:moment_bound_approximation} with $p=3$ and again $\mathbf{1}_{|x|>\epsilon}\leq\frac{|x|}{\epsilon}$. Therefore,  $\limsup_{N\to \infty} \P( |\tilde{Z}^{q,N}_{t_q}-{Z}^{q,N}_{t_q}|>\epsilon) \le \frac{C}{\gamma^2}+ \frac{4C_{LG}^2 C_3 T}{\epsilon^2}\left(\gamma^{-3}+\gamma^{-1}\right)$, which gives the desired convergence in probability since $\gamma$ can be arbitrary large. 

Since $\tilde{Z}^{q,N}_{t_q}-{Z}^{q,N}_{t_q}\to 0$ in probability for all $q\in \{1,\dots,Q\}$, we then get from~\eqref{eq_weak1}
$$(\tilde{X}^N,B^N,Z^{1,N}_{t_1},\dots,Z^{Q,N}_{t_Q})\implies (X,B,Z^1_{t_1},\dots,Z^Q_{t_Q}), $$
and in particular
$$ (\tilde{X}^N_{t_1}-Z^{1,N}_{t_1},\dots,\tilde{X}^N_{t_Q}-Z^{Q,N}_{t_Q})\implies (X_{t_1}-Z^{1}_{t_1},\dots,X_{t_Q}-Z^{Q}_{t_Q}).$$
This gives that $(X_{t_1}-Z^{1}_{t_1},\dots,X_{t_Q}-Z^{Q}_{t_Q})=0$, a.s. However, the processes $(X_t-X_0)_{t\in[0,T]}$ and $\left(\int_0^t \Gamma(t,s)[b(X_s) \ud s+\sigma(X_s)\ud W_s]\right)_{t\in[0,T]}$ are continuous (from the $C$-tightness for the first one and using Lemma~\ref{lemma24} for the second one using the uniform bounds on the moments of~$X$), they therefore coincides for every $t\in[0,T]$. This shows the first claim.

We now prove the second part of the claim. By using Lemma~\ref{lem:nonnegative_approx}, we have $\P(\xi^N_t \in \mathscr{C}, t \in[0,T])=1$. From~\eqref{cv_Xtilde}, it comes that $X_t\in \mathscr{C}$ for any $t\in [0,T]$ and thus $\P(X_t \in \mathscr{C}, t \in[0,T])=1$ since $X$ is continuous and $\mathscr{C}$ is a closed set.

Last, we prove the existence result without Assumption~\ref{ass:preserving_nonnegativity}.
In this case, we work with the approximation scheme $\check{X}^N$ and $\check{\xi}^N$ by~\eqref{def_check_X} and~\eqref{def_check_xi}. We then set $$\tilde{X}^N_t=X_0+\int_0^t \Gamma(t,s) \left(b(\check{\xi}^N_s) \ud s+ \sigma(\check{\xi}^N_s) \ud \check{B}^N_s \right),\ t\in[0,T],$$
instead of~\eqref{def_XtildeN}. We get in the same way $\E[|\check{X}^N_t-\tilde{X}^N_t|^2] \le C\left(\frac T N +w_{\Gamma,T}(T/N)^2 \right)$ for $t\in[0,T]$ and then by Lemma~\ref{lem:moment_bound_approximation} $$\forall t \in[0,T],\ \E[|\check{\xi}^N_t-\tilde{X}^N_t|^2] \le C\left(\frac T N +w_{\Gamma,T}(T/N)^2 \right).$$
We repeat then exactly the same arguments to prove the existence of a weak solution. 
 \end{proof}

\subsection{Passing to more general kernels.}

The final ingredient needed is a stability result to allow for  possibly singular kernels. 

\begin{lemma}\label{lemma_stability}
    Let $T>0$ and $X_0 \in \R^d$.  Fix $\Gamma$ satisfying Assumption~\ref{ass:conditions1_kernel},
 and let $(\Gamma_M)_{M \in \mathbb N}$ be a sequence of kernels satisfying $\int_0^t(\Gamma(t,s)-\Gamma_M(t,s))^2 \ud s \to_{M\to \infty} 0$  for all $t\ge 0$, and such that for every $T\in(0,+\infty)$, there exist $\eta,\gamma>0$ such that for all $M\in\N$,
\begin{equation*}
	\int_s^t {\Gamma_M(t,u)^2\,\ud u} + \int_0^s {(\Gamma_M(t,u) - \Gamma_M(s,u))^2\,\ud u} \leq \eta\,|t-s|^{2\gamma}, \quad (t,s)\in\Delta_T.
	\end{equation*}
  Let $b,\sigma$ satisfy Assumption~\ref{ass:conditions1_coefficients} and assume the existence of a sequence of  continuous weak solution $(X^M)_{M \in \mathbb N}$ to the SVEs
 \begin{align}\label{eq:SVE_M}
     X^M_t=X_0+\int_0^t {\Gamma_M(t,s)\Bigl(b(X^M_s) \ud s + \sigma (X^M_s) \ud B^M_s\Bigr)}.
 \end{align}
 Then for every~$T>0$, the sequence $(X^M, B^M)_{M \in \mathbb N}$ is tight for the uniform topology on $[0,T]$ and any limiting point $(X,B)$ is a continuous weak solution to the SVE \eqref{eq:SVE_M} with the kernel~$\Gamma$. 
\end{lemma}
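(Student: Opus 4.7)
The plan is to follow closely the identification-of-the-limit strategy of Lemma~\ref{lemma_weak_C0}, but now with the continuous kernels $\Gamma_M$ playing the role of the time-discretised kernels, and the limiting kernel $\Gamma$ being used to test the equation at finitely many times.

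First I would establish tightness. Apply Lemma~\ref{lem:moment_bound} to each $X^M$; a careful inspection of that proof shows that the constants depend on the kernel only through $\eta,\gamma$ (via the bound $\int_0^t \Gamma_M(t,s)^2\,\ud s \leq \eta T^{2\gamma}$ and the resolvent of $C\,\Gamma_M^2$, which is controlled by the same constants). Since by hypothesis $\eta,\gamma$ are uniform in $M$, we obtain
\begin{equation*}
\sup_{M} \E\Biggl[\sup_{t\in[0,T]}|X^M_t|^p + \sup_{0\le s<t\le T}\frac{|X^M_t-X^M_s|^p}{|t-s|^{p\alpha}}\Biggr] < \infty
\end{equation*}
for any $p>1/\gamma$ and $\alpha\in(0,\gamma-1/p)$. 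Arzel\`a--Ascoli plus the Hölder-seminorm moment bound yield $C$-tightness of $(X^M)$ on $[0,T]$, and since $(B^M)$ is trivially tight, $(X^M,B^M)$ is tight in $C([0,T],\R^d\times\R^d)$.

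Next I would identify the limit. Extract a convergent subsequence, still denoted $(X^M,B^M) \Rightarrow (X,B)$. Fix $Q\in\N^*$ and $0\le t_1<\dots<t_Q\le T$, and define
\begin{equation*}
Z^{q,M}_{t_q} = X_0 + \int_0^{t_q}\Gamma(t_q,s)\bigl(b(X^M_s)\,\ud s + \sigma(X^M_s)\,\ud B^M_s\bigr),
\end{equation*}
together with its candidate limit $Z^q_{t_q}$ obtained by replacing $(X^M,B^M)$ with $(X,B)$. By repeating \emph{verbatim} the Kurtzmann--Protter argument \cite[Theorem 7.10]{KP} used in the proof of Lemma~\ref{lemma_weak_C0} — including the truncation $b_\gamma,\sigma_\gamma$ needed to handle the linear growth — we obtain
\begin{equation*}
(X^M,B^M, Z^{1,M}_{t_1},\dots, Z^{Q,M}_{t_Q}) \Longrightarrow (X,B, Z^1_{t_1},\dots, Z^Q_{t_Q}).
\end{equation*}
The only place $\Gamma_M$ still appears is in the identity $X^M_{t_q} = X_0 + \int_0^{t_q}\Gamma_M(t_q,s)(b(X^M_s)\,\ud s + \sigma(X^M_s)\,\ud B^M_s)$. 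Subtracting this from the definition of $Z^{q,M}$ and using Jensen's inequality, It\^o's isometry, the linear growth of $(b,\sigma)$, and the uniform moment bounds of the first paragraph,
\begin{equation*}
\E\bigl[|X^M_{t_q} - Z^{q,M}_{t_q}|^2\bigr] \le C_{T,q}\int_0^{t_q}\bigl(\Gamma_M(t_q,s)-\Gamma(t_q,s)\bigr)^2\,\ud s \xrightarrow[M\to\infty]{} 0
\end{equation*}
by the assumed $L^2$-convergence of the kernels. Hence $X^M_{t_q} - Z^{q,M}_{t_q} \to 0$ in probability for each $q$, which combined with the previous weak convergence forces $X_{t_q} = Z^q_{t_q}$ almost surely for every $q$.

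Finally, both $X$ and the process $t \mapsto X_0 + \int_0^t \Gamma(t,s)\bigl(b(X_s)\,\ud s + \sigma(X_s)\,\ud B_s\bigr)$ are continuous on $[0,T]$ — the first by weak convergence in $C([0,T],\R^d)$, the second by Lemma~\ref{lemma24} applied with $\Gamma$ (which satisfies Assumption~\ref{ass:conditions1_kernel}) and the uniform moment bounds on $X$ inherited from $(X^M)$ via Fatou. Two continuous processes coinciding on a countable dense subset of $[0,T]$ are indistinguishable, so $(X,B)$ is indeed a weak solution to \eqref{eq:SVE_M} with kernel $\Gamma$.

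The main obstacle I expect is the non-convolution structure at the Kurtzman--Protter step: one cannot freeze the kernel globally in time as in the convolution case, and must instead test the equation at each fixed $t_q$ separately, treating $s\mapsto\Gamma(t_q,s)$ as a deterministic integrand. The truncation argument of Lemma~\ref{lemma_weak_C0} is what allows the linear growth of $(b,\sigma)$ to be accommodated, and I would reuse it verbatim rather than re-prove it.
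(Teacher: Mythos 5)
Your identification-of-the-limit step is essentially the paper's own argument: test the equation at finitely many times $t_q$ with the limiting kernel $\Gamma(t_q,\cdot)$, use \cite[Theorem 7.10]{KP} for the joint convergence, control $X^M_{t_q}-Z^{q,M}_{t_q}$ by It\^o isometry plus the $L^2$ kernel convergence and uniform second moments, and conclude by continuity of both sides (Lemma~\ref{lemma24} for the limit). One cosmetic remark there: the truncation $b_\gamma,\sigma_\gamma$ of Lemma~\ref{lemma_weak_C0} is not needed in this lemma, since the comparison term involves the \emph{same} process $X^M$ with two different kernels, so linear growth and the uniform moment bound suffice; invoking it is harmless but superfluous.

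The genuine gap is in your tightness step. You apply Lemma~\ref{lem:moment_bound} to each $X^M$ and assert that ``careful inspection'' shows the constant depends on the kernel only through $\eta,\gamma$, in particular that the resolvent of $C\,\Gamma_M^2$ ``is controlled by the same constants.'' But as stated and proved, the constant in Lemma~\ref{lem:moment_bound} depends on $\Gamma$ itself: the proof invokes \cite[Lemma 2.1]{Zhang10} only to get \emph{existence} of the resolvent $R_M$ with $\sup_{t\le T}\bigl|\int_0^t R_M(t,s)\,\ud s\bigr|<\infty$, with no quantitative bound uniform over the family $(\Gamma_M)_M$ — this is exactly why the paper's proof opens by saying Lemmas~\ref{lemma24} and~\ref{lem:moment_bound} cannot be applied here. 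The uniformity you need is true, but it requires an argument you do not give: either a uniform resolvent estimate obtained by splitting $[0,T]$ into intervals of length $T_0$ with $C\eta T_0^{2\gamma}\le 1/2$ and iterating, or (as the paper does) redoing the moment bound directly with the stopping times $\tau^{M,N}$, choosing $T_0$ so that $c'2^{p-1}T_0^{\gamma p}=1/2$, bootstrapping the bound over $\lceil T/T_0\rceil$ subintervals to get $\sup_M\sup_{t\le T}\E[|X^M_t|^p]<\infty$, and only then feeding this into Lemma~\ref{lemma24} (whose constant does depend only on $p,\eta,\gamma,T$) to get the uniform H\"older-moment bound and $C$-tightness. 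As written, your proposal asserts precisely the nontrivial uniform-in-$M$ estimate that constitutes the heart of the tightness proof.
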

Let us point here that when $\Gamma_M$ is continuous, which is the case when using Lemma~\ref{lem_approx_kernel} or Assumption~\ref{ass:conditions_kernel_limweak}, then the weak existence of~\eqref{eq:SVE_M}
follows from Lemma~\ref{lemma_weak_C0}.

\begin{proof}
	We first prove the Kolmogorov-Centsov criterion. Note that we cannot apply Lemmata \ref{lemma24} and \ref{lem:moment_bound} here as the aimed upper bound must be independent of $M$ to get the tightness of the sequence $(X^M, B^M)_{M \in \mathbb N}$. Let $p\ge 2$ and $\tau^{M,N}=\inf\{t\ge 0: |X^M_t|\ge N\}$ for $N\ge 1$. We have 
\begin{align*}|X^M_t-X^M_s|^p\mathbf{1}_{\tau^{M,N}>t}\le &\left|\int_s^t\Gamma_M(t,u)\mathbf{1}_{\tau^{M,N}>u}[b(X^M_u) \ud u + \sigma (X^M_u) \ud B^M_u] \right|^p\\&+\left|\int_0^s [\Gamma_M(t,u)-\Gamma_M(s,u)]\mathbf{1}_{\tau^{M,N}>u}[b(X^M_u) \ud u + \sigma (X^M_u) \ud B^M_u] \right|^p, \end{align*}
and with the same arguments as in the proof Lemma~\ref{lemma24}, we get that there exists a constant $c'$ that only depends on $p$, $\eta$, $\gamma$, $C_{LG}$ and $T$ such that for $0\le s<t\le T$,
$$\E[|X^M_t-X^M_s|^p\mathbf{1}_{\tau^{M,N}>t}]\le c' \left(1+ \sup_{u\le t} \E[|X^M_u|^p\mathbf{1}_{\tau^{M,N}>u}]\right) (t-s)^{\gamma p}.$$
We first prove that \begin{equation}\label{borne_unif}\sup_{u\le T} \E[|X^M_u|^p]<C<\infty\end{equation} for a constant~$C$ that only depends on $p$, $\eta$, $\gamma$, $C_{LG}$, $T$ and $X_0$. To do so, we  take $s=0$ and $\tilde{T}=\min(T,T_0)$ with $c'2^{p-1}T_0^{\gamma p}=1/2$. We get for $t\in[0,\tilde{T}]$
$$ \E[|X^M_t-X_0|^p\mathbf{1}_{\tau^{M,N}>t}]\le c' \left(1+ 2^{p-1}\E[|X_0|^p] \right) T_0^{\gamma p} + \frac 1 2 \sup_{u\le t} \E[|X^M_u-X_0|^p\mathbf{1}_{\tau^{M,N}>u}].$$
Taking the supremum on $t\in [0,\tilde{T}]$, we get  $\sup_{t\le \tilde{T}} \E[|X^M_u-X_0|^p\mathbf{1}_{\tau^{M,N}>u}]\le 2c' \left(1+ 2^{p-1}\E[|X_0|^p] \right) T_0^{\gamma p}$. Letting $N\to \infty$, we get $\sup_{t\le \tilde{T}} \E[|X^M_u-X_0|^p]\le 2c' \left(1+ 2^{p-1}\E[|X_0|^p] \right) T_0^{\gamma p}$. If $\tilde{T}=T$, the claim is proved. Otherwise, we repeat the argument $\lceil T/T_0 \rceil$ times on the intervals $[i T_0,(i+1)T_0 \wedge T]$ for $i=1,\dots, \lceil T/T_0 \rceil-1$, to get the uniform bound on $\sup_{u\le T} \E[|X^M_u|^p]$. Then, Lemma~\ref{lemma24} gives 
$$\E\left[\left(\sup_{0\le s<t\le T} \frac{|X^M_t-X^M_s|}{|t-s|^\alpha}\right)^p\right]\le C,$$
for all $\alpha \in [0,\gamma-1/p)$, where $C$ is a constant depending only on  $p$, $\eta$, $\gamma$, $C_{LG}$, $T$, $X_0$ and $\alpha$. It does not depend on~$M$ and gives the $C$-tightness of $(X^M,B^M)$ by the Kolmogorov criterion.

 There is thus a converging subsequence that we still denote $(X^M,B^M)\implies (X,B)$. We show that $X$ is a weak solution to the SVE similarly as for Lemma~\ref{lemma_weak_C0}.  Namely, for $Q\in \mathbb{N}^*$, $0\le t_1<\dots<t_Q\le T$, we get by~\cite[Theorem 7.10]{KP} that 
	$$(X^M,B^M,\tilde{Z}^{1,M},\dots,\tilde{Z}^{Q,M})\implies (X,B,Z^1,\dots,Z^Q),$$
	where $\tilde{Z}^{q,M}_t=\int_0^t \Gamma(t_q,s)[b(X^M_s) \ud s + \sigma(X^M_s) \ud B^M_s]$ and $\tilde{Z}^{q}_t=\int_0^t \Gamma(t_q,s)[b(X_s) \ud s + \sigma(X_s) \ud B_s]$, for $t\in [0,T]$ and $q\in \{1,\dots,Q\}$. We get in particular 
	$$(X^M,B^M,\tilde{Z}^{1,M}_{t_1},\dots,\tilde{Z}^{Q,M}_{t_Q})\implies (X,B,Z^1_{t_1},\dots,Z^Q_{t_Q}).$$
	We now define  
	$$Z^{q,M}_t=\int_0^t \Gamma_M(t_q,s)[b(X^M_s) \ud s + \sigma(X^M_s) \ud B^M_s], \quad t \in [0,T].$$ 
	We have bounds on second moment by~\eqref{borne_unif} with $p=2$  and therefore we get by Jensen inequality and Itô isometry 
	\begin{align*}
		\E[|Z^{q,M}_{t_q}-\tilde{Z}^{q,M}_{t_q}|^2] \le  \int_0^{t_q} 	(\Gamma(t_q,s)-\Gamma_M(t_q,s))^2 (T\E[|b(X^M_s)|^2]+\E[|\sigma(X^M_s)|^2])\ud s \to 0,
	\end{align*}
	by using~\eqref{borne_unif}, Assumption~\ref{ass:conditions1_coefficients} and knowing that $\int_0^t(\Gamma(t,s)-\Gamma_M(t,s))^2 \ud s \to_{M\to \infty} 0$  for all $t\ge 0$. We then conclude as for the previous theorem: we have $(X^M,B^M,{Z}^{1,M}_{t_1},\dots,{Z}^{Q,M}_{t_Q})\implies (X,B,Z^1_{t_1},\dots,Z^Q_{t_Q})$, which gives $X_{t_q}=X_0+\int_0^{t_q}\Gamma(t_q,s)[b(X_s) \ud s+\sigma(X_s)\ud B_s]$ for $1\le q\le Q$. Using then the continuity of the processes $X$ and $\left( \int_0^t \Gamma(t,s) [b(X_s) \ud s+\sigma(X_s)\ud B_s] \right)_{t\in[0,T]}$ (by Lemma~\ref{lemma24}), we get that $X$ solves the SVE~\eqref{eq:SVE}.
\end{proof}

\subsection{Putting everything together}
Using Lemmas~\ref{lemma_weak_C0} and \ref{lemma_stability}, we can now prove Theorems~\ref{thm_weak_Rd} and \ref{thm_maininvariance} as follows. 

\begin{proof}[Proof of Theorem~\ref{thm_weak_Rd}]  Let $X_0 \in \R^d$. Since $\Gamma$ satisfies Assumption \ref{ass:conditions1_kernel}, there exists by Lemma~\ref{lem_approx_kernel}  a sequence of continuous approximating kernels $\Gamma_M$ that satisfy the required assumption for the stability Lemma~\ref{lemma_stability}. By Lemma~\ref{lemma_weak_C0}, we know that there exists a weak solution to the SVE~\eqref{eq:SVE_M}, and we apply Lemma~\ref{lemma_stability} to get the weak existence of~$X$. The H\"older continuity follows from Lemma~\ref{lem:moment_bound}.
\end{proof}
\begin{proof}[Proof of Theorem~\ref{thm_maininvariance}]
Let $T>0$. If $\Gamma$ is continuous on $\Delta_T$ and satisfies Assumption \ref{ass:preserving_nonnegativity}, then we simply apply directly the second part of Lemma~\ref{lemma_weak_C0}. Otherwise, $\Lambda_T=\R_+$, and the proof follows the same arguments as the one of Theorem~\ref{thm_weak_Rd}, but we use the sequence of kernels~$\Gamma_M$ given  by Assumption~\ref{ass:conditions_kernel_limweak}. These kernels satisfy Assumption \ref{ass:preserving_nonnegativity}, they in particular preserves nonnegativity and are positive on the diagonal, which enables us to use the second part of Lemma~\ref{lemma_weak_C0}.
 This gives a weak solution to the SVE~\eqref{eq:SVE_M} that stays in~$\mathscr{C}$ for all $t\in[0,T]$. We then apply Lemma~\ref{lemma_stability} to get the weak existence of a continuous solution~$X$. Since $\mathscr{C}$ is closed, we also deduce from the weak convergence of the marginal laws that $\P(X_t\in \mathscr{C})=1$ for all $t\in [0,T]$ and then $\P(\forall t \in [0,T], \ X_t\in \mathscr{C})=1$ by using the continuity of~$X$. 
\end{proof}

\section{Proof of Theorem~\ref{T:VolSqrt}}\label{S:proofsquareroot}

\subsection{Existence for the Stochastic Volterra equation}\label{S:existenceRd+}
We first argue the existence of an $\R^d_+$-valued solution $X$ to the stochastic Volterra equation \eqref{eq:VolSqrt}. For this, we define the coefficients $b:\R^d \to \R^d$ and $\sigma: \R^d \to \mathcal M_d(\mathbb R)$ by 
\begin{align}
    b(x) := b^0 + B x,  \quad \sigma(x)  = \mbox{diag}\left( \sigma_1 \sqrt{x_1^+}, \ldots, \sigma_d \sqrt{x_d^+}\right), \quad x \in \R^d, 
\end{align}
where $y^+ := \max(0,y)$. Clearly, the coefficients $b$ and $\sigma$ are continuous with at most linear growth in the sense of Assumption~\ref{ass:conditions1_coefficients}   and satisfy the  conditions \eqref{eq:bsigmaRd+} thanks to the structural assumptions on $b^0$ and $B$ in \eqref{sqrt2}. Hence, Assumption \eqref{WSI_lambda} holds for any $\lambda>0$, for $\mathscr C = \R^d_+$ so that an application of Theorem~\ref{thm_maininvariance} yields the existence of an $\R^d_+$-valued continuous solution $X$ to the stochastic Volterra equation \eqref{eq:VolSqrt} for any $X_0 \in \R^d_+$. The Hölder regularity of the sample paths follows from Lemma~\ref{lem:moment_bound}.

To argue uniqueness, we start by deriving the exponential-affine transform formula in \eqref{eq:laplace}. This is the aim of  the following two sections.

\subsection{A verification result}

\begin{lemma}\label{L:verification} Fix a kernel $\Gamma : \Delta_T \to \R_+$ satisfying  Assumptions \ref{ass:conditions1_kernel} and let $X$ be an $\R^d_+$-valued solution to~\eqref{eq:VolSqrt}. Fix $f\in C([0,T],\R^d)$.  Assume there exists a solution $\psi \in C([0,T], \R^d)$ to the Riccati-Volterra equation \eqref{RicVolSqrt}. 
Then, the expression \eqref{eq:laplace} for the Laplace transform holds, 
for all $t\in [0,T]$.
\end{lemma}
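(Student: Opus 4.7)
The plan is to introduce the natural candidate martingale
\begin{equation*}
M_t \;=\; \exp\!\Bigl(\int_0^t f(s)^\top X_s\,\ud s \;+\; h(t,g_t)\Bigr), \qquad h(t,g_t) := \int_t^T F(s,\psi(s))^\top g_t(s)\,\ud s,
\end{equation*}
observe that $M_T=\exp(\int_0^T f^\top X\,\ud s)$ and that $M_t$ equals the product of the two factors appearing in \eqref{eq:laplace}, and then show $M$ is a martingale; the conditional-expectation formula will follow upon dividing by $\exp(\int_0^t f^\top X\,\ud s)$, which is $\mathcal F_t$-measurable. The essential identity making the verification work is that by the Riccati--Volterra equation \eqref{RicVolSqrt},
\begin{equation*}
\int_t^T F(s,\psi(s))^\top \Gamma(s,t)\,\ud s \;=\; \psi(t), \qquad t\in[0,T],
\end{equation*}
together with the built-in identity $g_t(t)=X_t$ which is immediate from the definitions of $g_t$ and $Z$ and the equation \eqref{eq:VolSqrt}.

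The key computation is to differentiate $h(t,g_t)$ in $t$. Using the definition $g_t(s)=g_0(s)+\int_0^t \Gamma(s,r)\,\ud Z_r$ for $s\ge t$ together with the stochastic Fubini theorem (whose applicability follows from Assumption~\ref{ass:conditions1_kernel} and the moment estimates from Lemma~\ref{lem:moment_bound}), I rewrite
\begin{equation*}
h(t,g_t) \;=\; \int_t^T F(s,\psi(s))^\top g_0(s)\,\ud s \;+\; \int_0^t \!\Psi_r(t)\,\ud Z_r, \qquad \Psi_r(t) := \int_t^T F(s,\psi(s))^\top \Gamma(s,r)\,\ud s.
\end{equation*}
Differentiating in $t$ via the Itô rule for integrals whose integrand depends on the upper limit (the ``Leibniz+Itô'' identity), and using $\Psi_t(t)=\psi(t)$ from the Riccati equation, I obtain
\begin{equation*}
\ud h(t,g_t) \;=\; -F(t,\psi(t))^\top g_t(t)\,\ud t \;+\; \psi(t)^\top \ud Z_t \;=\; -F(t,\psi(t))^\top X_t\,\ud t + \psi(t)^\top \ud Z_t.
\end{equation*}
Adding the contribution $f(t)^\top X_t\,\ud t$ from the time-derivative of the outer integral in $Y_t$, substituting $\ud Z_t = BX_t\,\ud t + \sigma(X_t)\ud W_t$, and unpacking $F_i(t,\psi)=f_i+(B^\top\psi)_i+\tfrac12\sigma_i^2\psi_i^2$, the $f^\top X$ and $\psi^\top B X$ pieces cancel and I am left with
\begin{equation*}
\ud Y_t \;=\; -\tfrac12\sum_i \sigma_i^2\psi_i(t)^2 X_{i,t}\,\ud t \;+\; \sum_i \psi_i(t)\sigma_i\sqrt{X_{i,t}}\,\ud W_{i,t}.
\end{equation*}
Applying Itô to $M_t=\exp(Y_t)$, the quadratic-variation term from the Brownian integral exactly compensates the drift, and $M$ is the Doléans--Dade exponential $M_t = M_0\,\mathcal E(N)_t$ with $N_t=\int_0^t\sum_i\psi_i(s)\sigma_i\sqrt{X_{i,s}}\,\ud W_{i,s}$, hence a nonnegative local martingale.

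The main obstacle is the upgrade from local to true martingale on $[0,T]$, which is what makes $M_t = \mathbb E[M_T\mid\mathcal F_t]$. I would proceed by a standard localization: choose stopping times $\tau_n\uparrow T$ making $M^{\tau_n}$ bounded martingales, obtain the desired identity for $X$ replaced by a stopped/truncated version, and pass to the limit. Uniform integrability of the family $(M_{t\wedge\tau_n})_n$ follows because $\psi\in C([0,T],\R^d)$ is bounded, $\langle N\rangle_T\le C\int_0^T|X_s|\,\ud s$, and $X$ has moments of all orders by Lemma~\ref{lem:moment_bound}; in the regime of interest (notably $f\le 0$ and $\psi\le 0$ in part \ref{T:VolSqrt:2} of Theorem~\ref{T:VolSqrt}) one in fact has a deterministic upper bound on $Y_t$ that dominates $M$ and yields UI directly, but the local-martingale–plus–Fatou argument combined with the $L^1$-boundedness afforded by the polynomial moment estimates on $X$ is enough in the generality stated here. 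Once $\mathbb E[M_T\mid\mathcal F_t]=M_t$ is established, rearranging gives \eqref{eq:laplace}.
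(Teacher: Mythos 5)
Your construction of $M$ and the It\^o/stochastic-Fubini computation identifying it as the Dol\'eans--Dade exponential $M_0\,\mathcal E\bigl(\sum_i\int_0^\cdot \sigma_i\psi_i(s)\sqrt{X_{i,s}}\,\ud W_{i,s}\bigr)$ is exactly the paper's argument, and that part is fine. The genuine gap is the upgrade from local to true martingale. Uniform integrability of $(M_{t\wedge\tau_n})_n$ does \emph{not} follow from the boundedness of $\psi$, the bound $\langle N\rangle_T\le C\int_0^T|X_s|\,\ud s$ and the polynomial moment estimates of Lemma~\ref{lem:moment_bound}: a stochastic exponential whose quadratic variation has all polynomial moments can still be a strict local martingale, and the UI of the stopped family is essentially equivalent to the martingale property you are trying to prove. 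Novikov/Kazamaki would require \emph{exponential} moments of $\int_0^T X_s\,\ud s$, which Lemma~\ref{lem:moment_bound} does not provide. Your fallback claim of a ``deterministic upper bound on $Y_t$'' when $f\le 0$, $\psi\le 0$ is also unjustified: $g_t(s)=g_0(s)+\int_0^t\Gamma(s,r)\,\ud Z_r$ is an unbounded stochastic integral and $F(s,\psi(s))$ has no definite sign (it contains $+\tfrac{\sigma_i^2}{2}\psi_i^2$), so no pathwise deterministic bound on $\int_t^T F(s,\psi(s))^\top g_t(s)\,\ud s$ is available before the formula is proved. Likewise ``local martingale plus Fatou plus $L^1$-boundedness'' only yields the supermartingale property, which is automatic.

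The paper closes this step differently, following \cite[Lemma 7.3]{AJLP19}: with $\tau_n=\inf\{t:\max_i X_{i,t}\ge n\}$ one defines probability measures $\mathbb Q^n$ with density $M_{\tau_n\wedge T}$ (the stopped exponential is a true martingale since its integrand is bounded), and by Girsanov the process $X$ solves under $\mathbb Q^n$ a Volterra equation whose extra drift $\mathbf 1_{[0,\tau_n]}(s)\,\sigma_i^2\psi_i(s)X_{i,s}$ still has linear growth \emph{uniformly in $n$} because $\psi$ is bounded on $[0,T]$. An adaptation of Lemma~\ref{lem:moment_bound} then gives $\E^{\mathbb Q^n}[\sup_{t\le T}|X_t|^2]\le C$ uniformly in $n$, hence $\mathbb Q^n(\tau_n<T)=O(1/n^2)$, so $\E[M_T]\ge\mathbb Q^n(\tau_n\ge T)\to 1$; combined with the supermartingale property this forces $\E[M_T]=M_0$ and martingality. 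Some argument of this type (exploiting the affine square-root structure through a change of measure, or an iterated Novikov argument with exponential moment control) is indispensable; as written, your martingality step does not go through.
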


We provide a brief outline of the proof, since the same strategy has been used in \cite*[Theorem 4.3]{AJLP19} in the convolution setting and   in \cite*[Theorem 2.1]{AKO22} for the non-convolution setting, in terms of the forward process $\mathbb E[X_s | \mathcal F_t]$, which is different from $g_t(s)$ in \eqref{eq:gts}. We note here that Assumption~\ref{ass:preserving_nonnegativity} is not needed on the kernel $\Gamma$. 

\begin{proof}[Proof of Lemma~\ref{L:verification}.]
Define 
$$ U_t = \int_0^t f(s)^\top X_s  \ud 
 s +  \int_t^T F(s, \psi(s))^\top  g_t(s) \ud s $$  
and set $ M = \exp(U)$. To obtain \eqref{eq:laplace}, it suffices to prove that $M$ is a martingale. Indeed, if this the case then, the martingale property yields
\begin{align}
    \mathbb E\left[ \exp\left( 
 \int_0^T f(s)^\top X_s  \ud  s  \right)  \Big | \mathcal F_t \right] &= \mathbb E\left[ M_T  \Big | \mathcal F_t \right]\\ &= M_t \\&= \exp\left(\int_0^t f(s)^\top X_s  \ud  s +  \int_t^T F(s, \psi(s))^\top  g_t(s) \ud  s\right),
\end{align}
which yields \eqref{eq:laplace}. We now argue martingality of $M$ by computing its dynamics using Itô's formula:
\begin{align}\label{eq:MIto}
    \frac{ \ud  M_t}{M_t} =  dU_t + \frac 1 2 d\langle U\rangle_t. 
\end{align}
The dynamics of $U$ can readily be obtained by recalling $g_t(s)$ from  \eqref{eq:gts} and by observing that for fixed $s$, the dynamics of $t\to g_t(s)$ are given by 
$$ \ud g_t(s) = \Gamma(s,t)  \ud  Z_t \quad t\leq s.  $$
Since  $g_t(t) = X_t$, it follows that 
\begin{align}
     \ud  U_t &= \left(f(t)^\top X_t  - F(t,\psi(t))^\top X_t \right)  \ud t  + \int_t^T F(s,\psi(s))^\top \Gamma(s,t)  \ud  s  \ud  Z_t \\
    &= \left(f(t)^\top X_t  -F(t,\psi(t))^\top X_t \right)  \ud t  + \psi(t)^\top   \ud  Z_t, 
    \end{align}
    where for the second equality we used the Riccati--Volterra equation \eqref{RicVolSqrt}. This implies that 
    $$ d\langle U\rangle_t = \sum_{i=1}^d \psi_i^2(t) \sigma_i^2 X_{i,t}  \ud  t .$$
    Injecting the dynamics of $dU$ and $d\langle U \rangle$ in \eqref{eq:MIto}, we get that 
    \begin{align}
        \frac{ \ud  M_t}{M_t} &= \sum_{i=1}^d\left( f_i(t) - F_i(t,\psi(t))  + (B^\top \psi(t))_i + \frac {\sigma_i^2 } 2  \psi_i^2(t) \right) X_{i,t} \ud t +  \sum_{i=1}^d \sigma_i\psi_i(t)\sqrt{X_{i,t}} \ud W_{i,t}, \\
        &=  \sum_{i=1}^d \sigma_i\psi_i(t)\sqrt{X_{i,t}} \ud W_{i,t},
    \end{align}
    where the drift vanishes in the second equality by definition of $F$ in \eqref{RicVolSqrt}.  This shows that $M$ is an exponential  local martingale of the form 
    \begin{align}
        M_t = M_0 \exp\left(  \sum_{i=1}^d \int_0^t  \sigma_i\psi_i(s)\sqrt{X_{i,s}} \ud W_{i,s}  - \frac 1 2  \sum_{i=1}^d \int_0^t  \sigma_i^2\psi_i^2(s) {X_{i,s}} \ud  s  \right).
    \end{align}
    The martingality of $M$ is obtained from a straightforward adaptation of \cite[Lemma 7.3]{AJLP19} to the non-convolution and multi-dimensional setting since $\psi$ is real-valued and continuous and hence bounded on $[0,T]$. {Namely, let us define the sequence of stopping times $\tau_n=\inf \{t\ge 0 : \max_{1\le i \le d} X_{i,t} \ge n \}$ and $\frac{d \mathbb{Q}^n}{d \P}=M_{\tau_n \wedge T}$. Then  by Girsanov's theorem, $\ud W^n_{i,s}= \ud W_{i,s} - \mathbf{1}_{[0,\tau_n]}(s) \sigma_i \psi_i(s) \sqrt{X_{i,s} }\ud s$ is a Brownian motion under 
    $\mathbb{Q}^n$ and we have 
    \[ X_{i,t} = X_{0,i} + \int_0^t \Gamma(t,s) \left(b^0+ (BX_s)_i +\mathbf{1}_{[0,\tau_n]}(s)\sigma^2_i\psi_i(s){X_{i,s}} \right) \ud s
+ \int_0^t \Gamma(t,s) \sigma_i\psi_i(s)\sqrt{X_{i,s}} \ud 
 W^n_{i,s},\]  
 Since $\psi$ is continuous, it is bounded on $[0,T]$. The drift and the volatility coefficients are thus upper bounded by $C_{LG}(1+|X_s|)$ uniformly in~$n$, so that a straightforward adaptation of Lemma~\ref{lem:moment_bound} to time dependent coefficients gives  $\E^{\mathbb{Q}^n}\left( \sup_{t\in[0,T]} |X_t|^2\right)\le C$, for a constant $C$ that does not depend on~$n$. Therefore, $\mathbb{Q}^n(\tau_n<T)= \mathbb{Q}^n(\sup_{t\in[0,T]} \max_{1\le i \le d} X_{i,t} \ge n)=O(1/n^2)$, and we have $\E[M_T]\ge \E[M_T \mathbf{1}_{\tau_n\ge T}]=\mathbb{Q}^n(\tau_n\ge T)\to_{n\to \infty}1$. Since $M$ is clearly a supermartingale, we get $\E[M_T]=1$ and that $M$ is thus a martingale. 
 }
\end{proof}

\subsection{Existence for the Riccati-Volterra equation}

 The existence of a solution to the Riccati-Volterra equation \eqref{RicVolSqrt} is obtained in  Lemma~\ref{L:Riccatiexistence} below. It relies on two elementary results. 

 The first one concerns the existence of local solutions to deterministic Volterra equations: 
\begin{lemma}\label{L:Riccatilocalexistence} Fix a kernel $\Gamma$ satisfying  Assumption~\ref{ass:conditions1_kernel}. Define the function $p: \R_+ \times \R^d \to \R^d$ by
$$ p_i(t,x):=x^\top A_i(t)x + b_i(t)^\top x + c_i(t), \quad i=1\ldots, d, \quad (t,x)\in \R_+\times \R^d,$$ where $A_i:\R_+\to \mathcal M_d(\R)$, $b_i:\R_+\to \R^d$, $c_i$ are continuous functions. Let  $g \in \mathcal C(\R_+,\R^d)$.    Then, the Volterra equation 
\begin{align}\label{eq:localexistence}
        \psi(t) = g(t) + \int_0^t \Gamma(t,s) p(s,\psi(s)) \ud s
    \end{align}
admits a unique  non-continuable  solution $\psi \in C([0,T_{max}),\R^d)$ in the sense that $\psi$ satisfies   \eqref{eq:localexistence} on $[0,T_{max})$ with $T_{max} \in (0,\infty]$ and $\sup_{t<T_{max}}|\psi( t)| = +\infty$, if $T_{max}<\infty$. 
\end{lemma}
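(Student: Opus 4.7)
The plan is to combine a contraction mapping argument on a small interval with a standard continuation procedure.

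\textbf{Step 1 (local existence and uniqueness by Picard iteration).} Fix $T>0$ and a ball radius $R>|g(0)|+1$. Set $M_R:=\sup_{t\in[0,T]}(|A(t)|+|b(t)|+|c(t)|)$ and note that $p(t,\cdot)$ is Lipschitz on the ball $B_R:=\{x\in\R^d:|x|\le R\}$ with some constant $L_R$ depending only on $M_R$ and $R$. For $T_0\in(0,T]$ to be chosen, consider the map
\begin{equation*}
\Phi(\psi)(t):=g(t)+\int_0^t \Gamma(t,s)\,p(s,\psi(s))\,\ud s,\qquad t\in[0,T_0],
\end{equation*}
on the closed set $\mathcal X_{T_0,R}:=\{\psi\in C([0,T_0],\R^d):\|\psi-g\|_\infty\le 1\}$ endowed with the uniform norm. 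By Cauchy--Schwarz and Assumption~\ref{ass:conditions1_kernel},
\begin{equation*}
\int_0^t |\Gamma(t,s)|\,\ud s\le \sqrt{t}\Bigl(\int_0^t \Gamma(t,s)^2\,\ud s\Bigr)^{1/2}\le \sqrt{\eta}\,t^{1/2+\gamma},
\end{equation*}
and the deterministic analogue of Lemma~\ref{lemma24} (applied with $\sigma\equiv 0$) implies $\Phi(\psi)\in C([0,T_0],\R^d)$. For $\psi\in\mathcal X_{T_0,R}$ we obtain $\|\Phi(\psi)-g\|_\infty\le \sqrt{\eta}\,T_0^{1/2+\gamma}\sup_{|x|\le R,\,t\le T_0}|p(t,x)|$, and for $\psi,\phi\in\mathcal X_{T_0,R}$,
\begin{equation*}
\|\Phi(\psi)-\Phi(\phi)\|_\infty\le L_R\sqrt{\eta}\,T_0^{1/2+\gamma}\,\|\psi-\phi\|_\infty.
\end{equation*}
Choosing $T_0$ small enough makes $\Phi$ a contraction from $\mathcal X_{T_0,R}$ into itself. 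Banach's fixed point theorem yields a unique solution on $[0,T_0]$.

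\textbf{Step 2 (continuation and maximal interval).} Say $\psi\in C([0,T^*),\R^d)$ is a solution. If $\sup_{t<T^*}|\psi(t)|=:M<\infty$, then $s\mapsto p(s,\psi(s))$ is bounded on $[0,T^*)$, and the deterministic analogue of Lemma~\ref{lemma24} shows that $t\mapsto \int_0^t \Gamma(t,s)p(s,\psi(s))\,\ud s$ is Hölder continuous on $[0,T^*]$. Together with the continuity of $g$, this gives $\psi(T^*-):=\lim_{t\to T^{*-}}\psi(t)\in\R^d$, and $\psi$ extends continuously to $[0,T^*]$. For $t\ge T^*$, write
\begin{equation*}
\psi(t)=\tilde g(t)+\int_{T^*}^t \Gamma(t,s)p(s,\psi(s))\,\ud s,\quad \tilde g(t):=g(t)+\int_0^{T^*}\Gamma(t,s)p(s,\psi(s))\,\ud s,
\end{equation*}
where $\tilde g$ is continuous on $[T^*,T^*+1]$ by the same Hölder estimate applied to the already determined integrand. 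The rescaled kernel $\tilde\Gamma(t,s):=\Gamma(t,s+T^*)$ still satisfies Assumption~\ref{ass:conditions1_kernel}, so Step~1 applies and yields an extension of $\psi$ to $[0,T^*+\delta)$ for some $\delta>0$, contradicting maximality. Defining $T_{\max}$ as the supremum of lengths of intervals on which a continuous solution exists, one therefore has either $T_{\max}=\infty$ or $\sup_{t<T_{\max}}|\psi(t)|=+\infty$.

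\textbf{Step 3 (global uniqueness).} Uniqueness on a small initial interval is immediate from the fixed-point argument. For two solutions $\psi_1,\psi_2$ on a common interval $[0,T^\sharp]\subset[0,T_{\max})$, both are bounded by some $R$; the Lipschitz bound on $p$ and the estimate from Step~1 give
\begin{equation*}
|\psi_1(t)-\psi_2(t)|\le L_R\int_0^t |\Gamma(t,s)|\,|\psi_1(s)-\psi_2(s)|\,\ud s,
\end{equation*}
and iterating this inequality (or applying the Volterra Grönwall lemma \cite[Lemma 2.2]{Zhang10} to the square, exactly as in the proof of Lemma~\ref{lem:moment_bound}) yields $\psi_1\equiv\psi_2$ on $[0,T^\sharp]$.

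The main technical point is the continuation argument in Step~2: the Volterra structure prevents a naive ``restart from $T^*$'' unless one verifies that the left-over integral $\int_0^{T^*}\Gamma(t,s)p(s,\psi(s))\,\ud s$ is continuous for $t\ge T^*$; this continuity is exactly what Assumption~\ref{ass:conditions1_kernel} buys us, and it is why the same hypothesis used for the stochastic equation is sufficient here.
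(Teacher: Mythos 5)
Your argument is correct, but it takes a different route from the paper: the paper disposes of this lemma in one line by citing the general local-existence/non-continuability theorem for nonlinear Volterra equations in Gripenberg--Londen--Staffans \cite[Theorem 12.2.6]{GLS90}, whereas you reprove it from scratch. Your self-contained proof uses exactly the ingredients that make the citation applicable: the bound $\int_0^t\lvert\Gamma(t,s)\rvert\,\ud s\le\sqrt{\eta}\,t^{1/2+\gamma}$ and the $L^2$-continuity of $t\mapsto\Gamma(t,\cdot)$ coming from Assumption~\ref{ass:conditions1_kernel} (the deterministic analogue of Lemma~\ref{lemma24}), the local Lipschitz property of the quadratic map $p$, a contraction on a short interval, continuity of the leftover integral $\int_0^{T^*}\Gamma(t,s)p(s,\psi(s))\,\ud s$ at the restart time, and uniqueness via the resolvent Gr\"onwall lemma of \cite[Lemma 2.2]{Zhang10} exactly as in Lemma~\ref{lem:moment_bound}. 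What your approach buys is self-containedness and transparency about where the kernel hypothesis enters (in particular at the continuation step, which is the genuinely Volterra-specific point and which you handle correctly); what the citation buys is brevity and a statement already formulated for non-continuable solutions.

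Two small points to tidy up, neither of which is a real gap. First, in Step 1 the set $\mathcal X_{T_0,R}$ only guarantees $\lvert\psi(s)\rvert\le 1+\sup_{[0,T_0]}\lvert g\rvert$, so you should require $R\ge 1+\sup_{[0,T_0]}\lvert g\rvert$ (which your choice $R>\lvert g(0)\rvert+1$ gives for $T_0$ small by continuity of $g$); as written the Lipschitz constant $L_R$ is invoked slightly before this is secured. Second, in Step 2 the restarted kernel should shift both arguments, i.e. $(\tau,r)\mapsto\Gamma(T^*+\tau,T^*+r)$, not $\Gamma(t,s+T^*)$; this shifted kernel does inherit Assumption~\ref{ass:conditions1_kernel} (apply the assumption at the points $(T^*+\tau,T^*+r)$), and with that correction Step 1 applies verbatim with initial data $\tilde g(T^*+\cdot)$, so the continuation and the concatenation into a solution of the original equation go through as you describe.
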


\begin{proof}
This follows  from \cite[Theorem 12.2.6]{GLS90}.
\end{proof}

 The second result deals with non-negativity of  solutions to linear deterministic Volterra equations: 
\begin{lemma}\label{L:Riccatiinvariance}
    Let $\Gamma$ satisfying Assumption~\ref{ass:conditions_kernel_limweak}. 
    Let $v\in\R^d$, $F\in C([0,T],\R^d)$ and $G\in C([0,T],\mathcal M_d(\R))$ be such that $v_i\ge0$, $F_i\ge0$, and $G_{ij}\ge0$ for all $i,j=1,\ldots,d$ with $i\ne j$. Then, the linear Volterra equation 
\begin{equation}\label{E:chi}
\chi(t) =  v+  \int_0^t \Gamma(t,s) \left( F(s) +  G(s)\chi(s) \right) \ud s
\end{equation}
has a unique solution $\chi\in C([0,T],\R^d)$ with $\chi_i\ge0$ for $i=1,\ldots,d$.
\end{lemma}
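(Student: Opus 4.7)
The plan is to reduce this lemma to Theorem~\ref{thm_maininvariance} applied to a degenerate (diffusion-free) time-inhomogeneous SVE with values in the closed convex cone $\mathscr{C}=\R_+^d$, and to handle uniqueness separately by a deterministic Gr\"onwall argument using the resolvent technique already employed in the proof of Lemma~\ref{lem:moment_bound}.

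First I would set $\sigma\equiv0$ and $b(t,x):=F(t)+G(t)x$. Since $F\in C([0,T],\R^d)$ and $G\in C([0,T],\mathcal M_d(\R))$ are bounded on $[0,T]$, $b$ is continuous and satisfies the uniform linear growth bound $|b(t,x)|\le C_{LG}(1+|x|)$ required by the time-inhomogeneous extension of Theorem~\ref{thm_maininvariance} described in Remark~\ref{Rk_time_inhomogeneous}. Because $\Gamma$ satisfies Assumption~\ref{ass:conditions_kernel_limweak} by hypothesis, the only nontrivial ingredient to check is that the time-inhomogeneous analogue of \eqref{WSI_lambda} holds for $\mathscr{C}=\R_+^d$ and every $\lambda\in\Lambda_T$.

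Since $\sigma\equiv0$, this invariance requirement reduces to showing that, for any $x\in\R_+^d$, $t\in[0,T]$, and $\lambda\ge0$, the linear ODE
\begin{equation}
\dot\zeta_u = \lambda\bigl(F(u)+G(u)\zeta_u\bigr), \quad u\in[t,T], \qquad \zeta_t=x,
\end{equation}
remains in $\R_+^d$. This is the standard fact that a linear system with a Metzler (quasi-monotone) matrix $G$ and a nonnegative forcing $F$ preserves the positive orthant: at any boundary point where $\zeta_i=0$ and $\zeta_j\ge0$ for $j\ne i$, the $i$-th component of the drift equals $\lambda(F_i(u)+\sum_{j\ne i}G_{ij}(u)\zeta_j)\ge0$. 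A clean rigorous route is to add $\varepsilon\mathbf 1$ to $F$, use Cauchy--Lipschitz uniqueness to deduce strict positivity of the perturbed flow, and then pass $\varepsilon\downarrow0$ using continuous dependence on parameters. With this in hand, the time-inhomogeneous version of Theorem~\ref{thm_maininvariance} delivers a H\"older-continuous solution $\chi\in C([0,T],\R_+^d)$ to \eqref{E:chi} with $\chi_i\ge0$ for all $i$.

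For uniqueness, given two continuous solutions $\chi^{1},\chi^{2}$ of \eqref{E:chi}, their difference $\delta:=\chi^{1}-\chi^{2}$ satisfies
\begin{equation}
\delta(t)=\int_0^t \Gamma(t,s)\,G(s)\,\delta(s)\,\ud s.
\end{equation}
Using boundedness of $G$ on $[0,T]$ together with Cauchy--Schwarz and Assumption~\ref{ass:conditions1_kernel} gives $|\delta(t)|^2\le C\int_0^t\Gamma(t,s)^2|\delta(s)|^2\,\ud s$ for some constant $C$. Introducing the resolvent of $C\Gamma^2$ (whose existence on $\Delta_T$ is ensured by \cite[Lemma~2.1]{Zhang10} exactly as in the proof of Lemma~\ref{lem:moment_bound}) and invoking the generalized Gr\"onwall inequality \cite[Lemma~2.2]{Zhang10} forces $\delta\equiv0$ on $[0,T]$, hence the solution is unique.

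I expect the main obstacle to be purely conceptual rather than technical: recognizing that the statement is an immediate corollary of the invariance machinery of Theorem~\ref{thm_maininvariance} applied with zero diffusion, and that the nonnegativity-preserving property of $\Gamma$ encoded in Assumption~\ref{ass:conditions_kernel_limweak} is precisely what allows the orthant-preservation of the underlying quasi-monotone ODE to transfer to the Volterra equation. Once this reduction is made, the Metzler structure trivializes the SDE invariance check and the remaining uniqueness argument is routine.
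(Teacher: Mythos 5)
Your proposal is correct and follows essentially the same route as the paper: the paper also rewrites \eqref{E:chi} with $b(t,x)=F(t)+G(t)x$ and $\sigma\equiv0$, notes that the Metzler/quasi-monotone structure gives the $\R^d_+$-invariance condition \eqref{eq:bsigmaRd+}, and invokes the time-inhomogeneous adaptation of Theorem~\ref{thm_maininvariance} via Remark~\ref{Rk_time_inhomogeneous}, with uniqueness from linearity of $b$ in $x$. Your explicit $\varepsilon$-perturbation check of the ODE invariance and the resolvent-based Gr\"onwall argument for uniqueness merely spell out steps the paper leaves as "straightforward" or "readily follows".
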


\begin{proof} We first observe that \eqref{E:chi} can be re-written as 
\begin{align}
    \chi(t) = v + \int_0^t \Gamma(t,s)b(s,\chi(s))  \ud s,
\end{align}
with 
$$ b(t,x) = F(t) + G(t)x, \quad (t,x)\in [0,T]\times \R^d, $$
linear in $x$ with bounded coefficients $F,G$ on $[0,T]$ (by continuity). In addition, for any $t\in [0,T]$  and  $x\in \R^d_+$, for any $i\in \{1,\ldots, d\}$, $x_i=0$ implies that  $b_i(t,x) = F_i(t) + \sum_{j\neq i} G_{ij} x_j \geq 0  $, which is the analogue of the invariance conditions  \eqref{eq:bsigmaRd+} for the set $\mathbb R^d_+$ for time-dependent coefficients $b$ and vanishing diffusion coefficient $\sigma$. Hence, the existence of an $\R^d_+$-valued continuous solution  $\chi$ to \eqref{E:chi} is obtained using a straightforward adaptation  of the proof of Theorem~\ref{thm_maininvariance}  with time-dependent coefficients and $\sigma\equiv 0$, see Remark~\ref{Rk_time_inhomogeneous}.     The uniqueness readily follows from the linearity of the coefficient $b$ in the $x$ variable. 
\end{proof}

We are now in place to derive the existence of a solution to the Riccati--Volterra equation \eqref{RicVolSqrt}.

\begin{lemma}\label{L:Riccatiexistence}
   Assume that  the kernel $\tilde \Gamma $ defined in \eqref{eq:tildeGamma} satisfies Assumption \ref{ass:conditions_kernel_limweak}. For any $f \in C([0,T], \mathbb{R}^d_-)$ the Riccati--Volterra equation \eqref{RicVolSqrt} has a global solution $\psi \in C([0,T], \mathbb{R}^d_-)$, i.e.~ $\psi_i \le 0$, $i = 1, \ldots, d$.
\end{lemma}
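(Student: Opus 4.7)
The plan is to pass to the forward form \eqref{eq:RiccatiFwd} via $\tilde\psi(t)=\psi(T-t)$, obtain a local solution by Lemma~\ref{L:Riccatilocalexistence}, and then use Lemma~\ref{L:Riccatiinvariance} twice: first to prove non-positivity via a self-referential linearization of the quadratic term, and second to establish a uniform a priori bound via comparison with a linear Volterra equation. Non-continuability of the local solution will then force it to extend to $[0,T]$.

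Since $F(s,x)$ is a polynomial of degree two in $x$ with continuous coefficients in $s$, Lemma~\ref{L:Riccatilocalexistence} applied with $g\equiv 0$ and the kernel $\tilde\Gamma$ (which satisfies Assumption~\ref{ass:conditions1_kernel}) yields a unique non-continuable solution $\tilde\psi\in C([0,T_{\max}),\R^d)$ of \eqref{eq:RiccatiFwd}. Fix $T'<T_{\max}$ and set $\chi := -\tilde\psi \in C([0,T'],\R^d)$. Using the identity $(B^\top\chi)_i-\frac{\sigma_i^2}{2}\chi_i^2 = \sum_{j\ne i}B_{ji}\chi_j+\bigl(B_{ii}+\frac{\sigma_i^2}{2}\tilde\psi_i\bigr)\chi_i$, we see that $\chi$ solves the linear Volterra equation
\[ \chi_i(t) \;=\; \int_0^t \tilde\Gamma(t,s)\bigl[F_i(s)+(G(s)\chi(s))_i\bigr]\,\ud s, \]
with $F_i(s):=-f_i(T-s)\ge 0$, $G_{ij}(s):=B_{ji}\ge 0$ for $i\ne j$ by \eqref{sqrt2}, and $G_{ii}(s):=B_{ii}+\frac{\sigma_i^2}{2}\tilde\psi_i(s)$ continuous on $[0,T']$. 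By Lemma~\ref{L:Riccatiinvariance}, this linear equation admits a unique solution in $C([0,T'],\R^d)$, which is componentwise nonnegative; hence $\chi\ge 0$, i.e.\ $\tilde\psi\le 0$ on $[0,T']$.

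For the a priori bound, let $\bar\chi\in C([0,T],\R^d_+)$ denote the unique nonnegative solution of $\bar\chi_i(t)=\int_0^t\tilde\Gamma(t,s)[-f_i(T-s)+(B^\top\bar\chi(s))_i]\,\ud s$, which exists globally on $[0,T]$ by Lemma~\ref{L:Riccatiinvariance}. The difference $\delta:=\bar\chi-\chi$ satisfies the linear Volterra equation $\delta_i(t) = \int_0^t \tilde\Gamma(t,s)[(B^\top\delta(s))_i+\tfrac{\sigma_i^2}{2}\chi_i(s)^2]\,\ud s$, whose forcing term and off-diagonal matrix entries are nonnegative; another application of Lemma~\ref{L:Riccatiinvariance} yields $\delta\ge 0$, so $|\tilde\psi|=\chi\le \bar\chi$ on $[0,T_{\max}\wedge T]$. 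If $T_{\max}\le T$, this uniform bound contradicts the non-continuability from Lemma~\ref{L:Riccatilocalexistence}, and therefore $T_{\max}>T$; it follows that $\psi(t):=\tilde\psi(T-t)\in C([0,T],\R^d_-)$ solves \eqref{RicVolSqrt}. The main subtlety is the self-referential linearization: the quadratic term $-\frac{\sigma_i^2}{2}\chi_i^2$ is absorbed into the diagonal of a linear operator by evaluating $\chi_i$ along the candidate solution itself, which is what makes Lemma~\ref{L:Riccatiinvariance} applicable despite the nonlinearity of $F$.
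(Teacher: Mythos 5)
Your proof is correct and follows essentially the same route as the paper: pass to the forward form \eqref{eq:RiccatiFwd}, get a non-continuable local solution from Lemma~\ref{L:Riccatilocalexistence}, prove nonpositivity by absorbing the quadratic term into the diagonal and applying Lemma~\ref{L:Riccatiinvariance}, and then bound $-\tilde\psi$ by the solution of the purely linear equation (your $\bar\chi$ is just $-\ell$ in the paper) to rule out blow-up before $T$. Working with $\chi=-\tilde\psi$ on compact subintervals $[0,T']$ of $[0,T_{\max}\wedge T]$ is only a cosmetic sign-flip and a slightly more careful handling of the half-open existence interval.
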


\begin{proof} We will establish the existence of a solution $\tilde \psi \in C([0,T], \R^d_{-})$ for the Volterra--Riccati equation written in forward form in \eqref{eq:RiccatiFwd} with the kernel $\tilde \Gamma$. Then,  by a change of variables, the function $\psi  \in C([0,T], \R^d_{-})$ defined by  $\psi(t)=\tilde \psi(T-t)$ solves the Riccati--Volterra equation \eqref{RicVolSqrt}. 

By Lemma~\ref{L:Riccatilocalexistence}, since the kernel $\tilde \Gamma$ satisfies    Assumption~\ref{ass:conditions1_kernel},  there exists a unique non-continuable solution $(\tilde \psi,T_{\rm max})$ of \eqref{eq:RiccatiFwd}. Our aim is to argue that $T_{\rm max} \geq T$ by showing that 
\begin{align}\label{eq:temptmax}
   \sup_{t< T_{\rm max}} |\tilde \psi(t)| < \infty.  
\end{align}
For this, we first observe that  
on the interval $[0,T_{\rm max})$, the function   $-\tilde \psi_i$ satisfies the linear equation
\[
\chi_i(t) = \int_0^t \tilde \Gamma(t,s) \left( -f_i(T-s) + (B^\top \chi(s))_i  + \frac{\sigma_i^2}{2}  \tilde \psi_i(s) \chi_i(s)  \right) \ud s.
\]
Due to \eqref{sqrt2} and since $f$ has nonpositive components and $\tilde \Gamma$ satisfies Assumption~\ref{ass:conditions_kernel_limweak}, Lemma~\ref{L:Riccatiinvariance} yields $\tilde \psi_i\le0$, $i=1,\ldots,d$. Next, let $\ell \in C([0,T],\R^d)$ be the unique solution of the linear equation
\begin{align*}
\ell(t) =  \int_0^t \tilde \Gamma(t,s)\left( f(T-s) + B^\top \ell(s) \right)  \ud s.
\end{align*} Observing that the function  $\tilde \psi - \ell$  satisfies the equation
\[
\chi_i(t) = \int_0^t \tilde \Gamma(t,s)  \left( (B^\top \chi(s))_i + \frac{\sigma_i^2}{2} \tilde \psi^2_i(s)^2  \right)\ud  s,
\]
on $[0,T_{\rm max})$, another  application of Lemma~\ref{L:Riccatiinvariance}   yields that  $\ell_i\le \tilde \psi_i$ on $[0,T_{\rm max})$. In summary, we have shown that
\[
\text{$\ell_i \le \tilde \psi_i \le 0$  on $[0,T_{\rm max})$ for $i=1,\ldots,d$.}
\]
Since $\ell$  is a global solution and thus have finite norm on any bounded interval, this implies \eqref{eq:temptmax} so that $T_{\rm max}\geq T$ as needed. This ends the proof. \end{proof}

\subsection{Putting everything together}

We are now ready to complete the proof of Theorem~\ref{T:VolSqrt}.

\begin{proof}[Proof of  Theorem~\ref{T:VolSqrt}]
The existence of an $\R^d_+$-valued continuous solution $X$ to the stochastic Volterra equation \eqref{eq:VolSqrt} for any $X_0 \in \R^d_+$ has been obtained in Section~\ref{S:existenceRd+}, together with the Hölder regularity of the sample paths of $X$.
Weak uniqueness of $X$ is a consequence of  the exponential-affine transform formula in \eqref{eq:laplace}, as $f$ ranges through $\mathcal C([0,T], \R_-^d)$. It remains to argue \eqref{eq:laplace}. For $f \in \mathcal C([0,T], \R_-^d)$, an application of Lemma~\ref{L:Riccatiexistence} yields the existence of a solution $\psi \in C([0,T], \R^d_-)$ to the Riccati-Volterra equation \eqref{RicVolSqrt}. 
Then, an application of Lemma~\ref{L:verification} gives the exponential-affine transform formula in \eqref{eq:laplace} and ends the proof. 
\end{proof}

\section{Properties and characterization of nonnegativity preserving double kernels}\label{Sec_kernels}

\subsection{Monotonicity condition for nonnegative preserving kernels to deal with $x_0 \geq 0$.}
\begin{prop}\label{prop_pos_gen}
	Let $T\in (0,+\infty]$ and $\Gamma:\Delta_T \to \R_+$ be a double kernel preserving nonnegativity on $[0,T]$ such that $\Gamma(s,s)>0$ for all $s\in \R_+$ with $s\le T$.  We assume moreover that 
	$$ t \mapsto \Gamma(t,s) \text{ is nonincreasing for } t\in[s,T], t<\infty. $$
	Let $K\in \N^*$, $0\le t_1 <\dots <t_K \le T$ and $x_0,\dots,x_K \in \R$ be such that $x_0\ge 0$ and
	$$\forall k \in \{1,\dots, K\}, \ x_0+\sum_{k'=1}^kx_{k'}\Gamma(t_k,t_{k'})\ge 0.$$
	Then, we have $x_0+\sum_{k: t_k\le t} x_k \Gamma(t,t_k)\ge 0$ for all $t\in  [0,T]$, $t<\infty$.
\end{prop}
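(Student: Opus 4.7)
The plan is to construct auxiliary nonnegative weights $y_1,\dots,y_K\ge 0$ placed at the same nodes $t_1,\dots,t_K$ so that the interpolant $A(t):=\sum_{k:\, t_k\le t} y_k\,\Gamma(t,t_k)$ satisfies $A(t_k)=x_0$ at every node and $A(t)\le x_0$ uniformly on $[0,T]$. The weights are forced by the first requirement, via the recursion $y_1:=x_0/\Gamma(t_1,t_1)$ and, for $k\ge 2$,
\[
	y_k := \frac{1}{\Gamma(t_k,t_k)}\Bigl(x_0-\sum_{k'=1}^{k-1} y_{k'}\,\Gamma(t_k,t_{k'})\Bigr).
\]
All divisions are legitimate since $\Gamma(t_k,t_k)>0$ by assumption, and $y_1\ge 0$ because $x_0\ge 0$.

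I will then establish $y_k\ge 0$ together with $A(t)\le x_0$ simultaneously by induction on $k$, invoking only the monotonicity hypothesis. Given $y_1,\dots,y_{k-1}\ge 0$ and $A(t_{k-1})=x_0$, the fact that $t\mapsto\Gamma(t,t_{k'})$ is nonincreasing yields termwise $y_{k'}\Gamma(t,t_{k'})\le y_{k'}\Gamma(t_{k-1},t_{k'})$ for every $t\ge t_{k-1}$ and every $k'\le k-1$. Summing gives $\sum_{k'<k} y_{k'}\Gamma(t,t_{k'})\le A(t_{k-1})=x_0$ for all $t\in[t_{k-1},t_k]$; at $t=t_k$ this forces $y_k\ge 0$ and $A(t_k)=x_0$, while for $t\in[t_{k-1},t_k)$ it reads exactly $A(t)\le x_0$. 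The same monotonicity estimate extended to $[t_K,T]$ yields $A(t)\le A(t_K)=x_0$, and $A(t)=0$ for $t\in[0,t_1)$.

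With the $y_k$ in hand, set $\tilde x_k:=x_k+y_k$. The check sums transform into $\sum_{k'=1}^k \tilde x_{k'}\,\Gamma(t_k,t_{k'}) = A(t_k)+\sum_{k'=1}^k x_{k'}\,\Gamma(t_k,t_{k'}) = x_0 + \sum_{k'=1}^k x_{k'}\,\Gamma(t_k,t_{k'})$, which is $\ge 0$ by the hypothesis of the proposition. Applying Definition~\ref{def:positivity} to $(\tilde x_1,\dots,\tilde x_K)$ at $(t_1,\dots,t_K)$ then yields $\sum_{k:\, t_k\le t}\tilde x_k\,\Gamma(t,t_k)\ge 0$ for every $t\in[0,T]$. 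Expanding and using $A(t)\le x_0$, $\sum_{k:\, t_k\le t} x_k\,\Gamma(t,t_k)\ge -A(t)\ge -x_0$, which gives the desired conclusion after adding $x_0$.

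The main subtlety I expect is discovering this particular distribution of the $x_0$ mass across the existing nodes. A more natural first attempt—absorbing $x_0$ into a single extra term $y_0\,\Gamma(t,\tau_0)$ at some $\tau_0\le t_1$—runs into a sign-direction conflict: the check sum at $t=t_K$ forces $y_0\,\Gamma(t_K,\tau_0)\ge x_0$, while the final lower bound forces $y_0\,\Gamma(t,\tau_0)\le x_0$ on all of $[0,T]$ and in particular at $t=\tau_0$, and the monotonicity of $\Gamma(\cdot,\tau_0)$ pushes these in incompatible directions. Spreading the mass recursively across all the $t_k$, as above, aligns both requirements with the single direction of monotonicity available.
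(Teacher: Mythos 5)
Your proof is correct and is essentially the paper's own argument up to a sign convention: your $y_k$ are exactly the negatives of the paper's auxiliary weights $\tilde x_k$ (chosen so the check sums equal zero), your induction showing $y_k\ge 0$ and $A(t)\le x_0$ via the monotonicity of $t\mapsto\Gamma(t,s)$ mirrors the paper's induction that $\tilde x_k\le 0$, and the final step of applying Definition~\ref{def:positivity} to the differences $x_k+y_k$ is the same decomposition as the paper's $\delta_k=x_k-\tilde x_k$. No substantive difference in method or generality.
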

\begin{proof}
	We introduce $\tilde{x}_1=-x_0/\Gamma(t_1,t_1)$ and $\tilde{x}_k=\frac{-1}{\Gamma(t_k,t_k)}\left( x_0+ \sum_{k'=1}^{k-1} \tilde{x}_{k'}\Gamma(t_k,t_{k'}) \right)$, so that $ x_0+\sum_{k'=1}^k \tilde{x}_{k'}\Gamma(t_k,t_{k'}) = 0$ for all $k\in \{1,\dots, K\}$.
	
	Let $\delta_k=x_k-\tilde{x}_k$ for $k\in \{1,\dots,K\}$. We have  $\forall k \in \{1,\dots, K\}, \ \sum_{k'=1}^k\delta_{k'}\Gamma(t_k,t_{k'})\ge 0$, which gives $\sum_{k: t_k\le t} \delta_k \Gamma(t,t_k)\ge 0$ because $\Gamma$ preserves nonnegativity on $[0,T]$. Since $x_0+\sum_{k: t_k\le t} x_k\Gamma (t_k,t)=x_0+\sum_{k: t_k\le t} \tilde{x}_k\Gamma(t,t_k) + \sum_{k: t_k\le t} \delta_k\Gamma(t,t_k)$, it is then sufficient to check that $x_0+\sum_{k: t_k\le t} \tilde{x}_k\Gamma(t,t_k)\ge 0$ for all $t\ge 0$. To get this, we check easily by induction on~$k$ that $\tilde{x}_k\le 0$ for $k=1,\dots,K$ and use that the functions $\Gamma(\cdot,t_k)$ are nonincreasing.
\end{proof}

\subsection{Characterization}

As in \cite[Section 2.1]{Alfonsi23}, we give a characterization of nonnegativity preserving double kernels. We need few notation to state it. Let $T\in (0,+\infty]$ and we denote by abuse of notation $\Delta_T=\Delta$ for $T=+\infty$. 
For $l\ge 2$, and $T\in \R_+^*$, we define 
\begin{align*}
	&\Delta^l=\{(s_l,\dots,s_1)\in \R_+^l: s_1\le \dots \le s_l  \}, \ \mathring{\Delta}^l=\{(s_l,\dots,s_1)\in \R_+^l: 0\,<\,s_1< \dots < s_l  \}, \\
	&\Delta^l_T=\{(s_l,\dots,s_1)\in \R_+^l: s_1\le \dots \le s_l \le T \}, \ \mathring{\Delta}^l_T=\{(s_l,\dots,s_1)\in \R_+^l: 0\,<\,s_1< \dots < s_l\,<\,T \}.
\end{align*}
Note that for $l=2$, we have  $\Delta_T=\Delta^2_T$ and $\mathring{\Delta}_T=\mathring{\Delta}^2_T$. We make the same abuse of notation  and set $\Delta^l_T=\Delta^l$, $\mathring{\Delta}^l_T=\mathring{\Delta}^l$ for $T=+\infty$.
\begin{definition} 
	Let $T\in (0,+\infty]$ and kernel $\Gamma:\Delta_T \to \R_+$ such that $\Gamma(s,s)>0$ for all $s\ge 0$.  We define by induction, for $l\ge 2$, the functions $\Gamma_l: \Delta^l_T \to \R$ by   $\Gamma_2(s_2,s_1)=\Gamma(s_2,s_1)/\Gamma(s_1,s_1)$ for $(s_2,s_1)\in \Delta_T$ and
	\begin{equation}\label{def_Gammal}
	\Gamma_{l+1}(s_{l+1},\dots,s_{1})=\Gamma_{l}(s_{l+1},\dots,s_3,s_1)-\Gamma_2(s_2,s_1)\Gamma_{l}(s_{l+1},\dots,s_2),
	\end{equation}
	for $l\ge 2$, $(s_{l+1},\dots,s_{1}) \in \Delta_T^{l+1}$.
\end{definition}

\begin{theorem}\label{thm_char_pos}
	Let $T\in (0,+\infty]$ and $\Gamma:\Delta_T \to \R_+$ such that $\Gamma(s,s)> 0$ for $s\ge 0$. The double kernel~$\Gamma$ preserves nonnegativity on $[0,T]$ if, and only if all the functions $\Gamma_l:\Delta^l_T \to \R$, $l\ge 2$, defined by~\eqref{def_Gammal} are nonnegative on $\mathring{\Delta}^l_T$.
\end{theorem}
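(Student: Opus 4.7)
The proof will be based on establishing a key algebraic identity: for strictly ordered points $0 \le t_1 < \cdots < t_k \le T$, scalars $x_1,\ldots,x_k \in \mathbb{R}$, and $t \in (t_k, T]$, setting $y_{k'} := \sum_{i=1}^{k'} x_i\,\Gamma(t_{k'},t_i)$ for $k' = 1,\ldots,k$, I claim
\begin{equation}\label{eq:plankey}
\sum_{k'=1}^k x_{k'}\,\Gamma(t,t_{k'}) = \sum_{k'=1}^k y_{k'}\,\Gamma_{k-k'+2}(t,t_k,t_{k-1},\ldots,t_{k'}).
\end{equation}
This identity expresses the quantity whose sign we wish to control as a linear combination of the nonnegativity constraints $y_{k'} \ge 0$, with coefficients given precisely by the functions $\Gamma_l$ from \eqref{def_Gammal}. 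A small sanity check for $k=2$ confirms that the coefficient of $y_1$ is $\Gamma_2(t,t_1) - \Gamma_2(t_2,t_1)\Gamma_2(t,t_2) = \Gamma_3(t,t_2,t_1)$ and that of $y_2$ is $\Gamma_2(t,t_2)$, matching \eqref{def_Gammal}.

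I would prove \eqref{eq:plankey} by induction on $k$. The base case $k=1$ reduces to $x_1\,\Gamma(t,t_1) = y_1\,\Gamma_2(t,t_1)$ and follows from $y_1 = x_1\,\Gamma(t_1,t_1)$. For the inductive step, I would solve the lower-triangular system for $x_k = \bigl(y_k - \sum_{i<k} x_i\,\Gamma(t_k,t_i)\bigr)/\Gamma(t_k,t_k)$, substitute it into the left-hand side, apply the inductive hypothesis to reexpress $\sum_{i<k} x_i\,\Gamma(t,t_i)$ and $\sum_{i<k} x_i\,\Gamma(t_k,t_i)$ (with the same $y$'s built on $t_1,\ldots,t_{k-1}$), and regroup the $y_{k'}$ coefficients using the recursion \eqref{def_Gammal} applied with $s_2 = t_k$.

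Once \eqref{eq:plankey} is available, the sufficiency direction is immediate: assuming $\Gamma_l \ge 0$ on $\mathring{\Delta}^l_T$ for all $l$, fix the data in Definition~\ref{def:positivity} (so $y_{k'} \ge 0$ for each $k'$) and $t \in [0,T]$, and let $k$ be the largest index with $t_k \le t$. The cases $k=0$ (empty sum) and $t=t_k$ (the sum equals $y_k \ge 0$) are handled directly; otherwise $(t,t_k,\ldots,t_1) \in \mathring{\Delta}^{k+1}_T$ and \eqref{eq:plankey} gives nonnegativity as a sum of nonnegative terms. For necessity, fix $(s_l,\ldots,s_1) \in \mathring{\Delta}^l_T$ and set $t_k := s_k$ for $k = 1,\ldots,l-1$ and $t := s_l$. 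Since the matrix $(\Gamma(t_i,t_j))_{1 \le j \le i \le l-1}$ is lower triangular with strictly positive diagonal, there exist unique scalars $x_1,\ldots,x_{l-1}$ realizing $y_1 = 1$ and $y_2 = \cdots = y_{l-1} = 0$. These satisfy the hypotheses of Definition~\ref{def:positivity}, so nonnegativity preservation yields $\sum_{k'=1}^{l-1} x_{k'}\,\Gamma(t,t_{k'}) \ge 0$, while \eqref{eq:plankey} identifies this sum precisely with $\Gamma_l(s_l,\ldots,s_1)$.

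The main obstacle is the inductive proof of \eqref{eq:plankey}: the combinatorics of tracking which $t_j$ plays the role of $s_2$ in the recursion \eqref{def_Gammal}, and ensuring that the coefficient of each $y_{k'}$ reassembles into the claimed $\Gamma_{k-k'+2}(t,t_k,\ldots,t_{k'})$ after substitution and regrouping, requires some care. The rest of the argument is routine linear algebra together with a direct reading of Definition~\ref{def:positivity}.
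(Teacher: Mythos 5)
Your overall architecture is essentially the paper's: your key identity is exactly the content of Lemma~\ref{lem_char_pos}, namely that the multipliers $\beta_{k'}$ writing $\sum_{k}x_k\Gamma(t,t_k)$ as $\sum_{k'}\beta_{k'}y_{k'}$ over the triangular constraints are $\beta_{k'}=\Gamma_{k-k'+2}(t,t_k,\dots,t_{k'})$; sufficiency is then the nonnegative-combination argument, and your necessity step (solve the triangular system for $y_1=1$, $y_2=\dots=y_{l-1}=0$) is the extreme-ray argument behind the paper's observation that the infimum in \eqref{min_pb} is nonnegative iff all $\beta_{k'}\ge0$. So the only real question is your proof of the identity, and there the inductive step as you describe it does not go through. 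After substituting $x_k=(y_k-\sum_{i<k}x_i\Gamma(t_k,t_i))/\Gamma(t_k,t_k)$ and applying the induction hypothesis at the two evaluation points $t$ and $t_k$, the coefficient of $y_{k'}$ with $k'<k$ that you must identify is
\begin{equation*}
\Gamma_{k-k'+1}(t,t_{k-1},\dots,t_{k'})-\Gamma_2(t,t_k)\,\Gamma_{k-k'+1}(t_k,t_{k-1},\dots,t_{k'}),
\end{equation*}
and you need this to equal $\Gamma_{k-k'+2}(t,t_k,t_{k-1},\dots,t_{k'})$. That is a recursion which removes the \emph{second-largest} argument $t_k$, whereas \eqref{def_Gammal} removes the \emph{second-smallest} argument $s_2$; ``applying \eqref{def_Gammal} with $s_2=t_k$'' only makes sense when $k'=k-1$, since for $k'<k-1$ the points $t_{k'+1},\dots,t_{k-1}$ lie strictly between $t_{k'}$ and $t_k$, so $t_k$ is never in position $s_2$. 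The dual recursion you actually need is true, but it is not the definition and requires its own proof, e.g.\ via the explicit alternating-sum formula \eqref{Gamma_l_nonrec}, which the paper only establishes later (in the proof of Corollary~\ref{C:tildegammapreserving}), or via a separate double induction. This is precisely the ``combinatorics requiring care'' you flag, and as written your plan does not supply it.

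Two clean repairs are available. Either run your induction by peeling off the \emph{smallest} node instead: solve $x_1=y_1/\Gamma(t_1,t_1)$, pass to $\tilde y_{k'}=y_{k'}-y_1\Gamma_2(t_{k'},t_1)$ on the points $t_2<\dots<t_k$, and then the coefficient of $y_1$ reassembles into $\Gamma_{k+1}(t,t_k,\dots,t_1)$ by iterating \eqref{def_Gammal} exactly as it is written (the eliminated point is always the current second-smallest argument). Or follow the paper: determine the $\beta$'s by Gaussian elimination on the triangular system \eqref{def_betas}, starting from the equation $k'=K$ and working downwards; there the defining recursion \eqref{def_Gammal} applies verbatim at every step because the free lower index $t_{k'}$ always remains the smallest argument of the tuples produced. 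With either repair, the rest of your argument (sufficiency, and necessity via the choice $y_1=1$, $y_{k'}=0$ for $k'\ge2$) is correct and matches the paper's proof in substance.
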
  
\noindent This theorem is a key result to check whether a double kernel preserves nonnegativity. Its proof is postponed to the next subsection. Before that, we state interesting corollaries.

\begin{corollary}\label{cor_expoinvariance}
	Let~$\Gamma:\Delta \to \R_+$ such that $\Gamma(s,s)> 0$ for $s\ge 0$. Let $\rho$ be a Borel measure on $\R_+$ finite on compact sets.  Then, $\Gamma:\Delta \to \R_+$ preserves nonnegativity if, and only if $\Gamma^{\rho}(t,s)=\Gamma(t,s)e^{-\rho((s,t])}$ preserves nonnegativity.
\end{corollary}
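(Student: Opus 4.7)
My plan is to use the characterization provided by Theorem~\ref{thm_char_pos}: both $\Gamma$ and $\Gamma^\rho$ satisfy $\Gamma(s,s)>0$ and $\Gamma^\rho(s,s)=\Gamma(s,s)e^{-\rho(\emptyset)}=\Gamma(s,s)>0$, so the claim reduces to showing that the iterated functions $(\Gamma^\rho)_l$ are nonnegative on $\mathring{\Delta}^l$ if and only if the $\Gamma_l$ are. The natural guess, which I would try to establish by induction on $l\ge 2$, is the identity
\begin{equation}\label{eq:keyinduction}
    (\Gamma^\rho)_l(s_l,\dots,s_1) = \Gamma_l(s_l,\dots,s_1)\,e^{-\rho((s_1,s_l])}, \quad (s_l,\dots,s_1)\in \Delta^l.
\end{equation}
Since the exponential factor is strictly positive, \eqref{eq:keyinduction} immediately implies that $(\Gamma^\rho)_l$ has the same sign as $\Gamma_l$ pointwise, and combining with Theorem~\ref{thm_char_pos} yields the corollary.

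For the base case $l=2$, one computes directly $(\Gamma^\rho)_2(s_2,s_1) = \Gamma^\rho(s_2,s_1)/\Gamma^\rho(s_1,s_1) = \Gamma_2(s_2,s_1)\,e^{-\rho((s_1,s_2])}$, using $\Gamma^\rho(s_1,s_1)=\Gamma(s_1,s_1)$. For the induction step, the recursion \eqref{def_Gammal} applied to $\Gamma^\rho$ gives
\begin{equation*}
    (\Gamma^\rho)_{l+1}(s_{l+1},\dots,s_1) = (\Gamma^\rho)_l(s_{l+1},\dots,s_3,s_1) - (\Gamma^\rho)_2(s_2,s_1)(\Gamma^\rho)_l(s_{l+1},\dots,s_2).
\end{equation*}
Substituting the induction hypothesis in each term produces exponential factors $e^{-\rho((s_1,s_{l+1}])}$ and $e^{-\rho((s_1,s_2])}\,e^{-\rho((s_2,s_{l+1}])}$ respectively. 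The crucial observation is the additivity of the measure on consecutive intervals, $\rho((s_1,s_2])+\rho((s_2,s_{l+1}])=\rho((s_1,s_{l+1}])$, which makes the two exponential factors equal. Factoring out $e^{-\rho((s_1,s_{l+1}])}$ and recognising the expression in brackets as $\Gamma_{l+1}(s_{l+1},\dots,s_1)$ via \eqref{def_Gammal} closes the induction.

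The proof is essentially bookkeeping once \eqref{eq:keyinduction} is guessed; there is no real obstacle, but the one point that requires care is the telescoping of the measure on the half-open intervals $(s_1,s_2]$ and $(s_2,s_{l+1}]$. This is where the choice of $\rho((s,t])$ (rather than, say, $\rho([s,t))$) matters, and where one uses that on $\mathring{\Delta}^l$ the $s_i$ are strictly ordered so that the intervals partition $(s_1,s_{l+1}]$ without overlap. Since the identity \eqref{eq:keyinduction} is an equivalence (the exponential factor is invertible), the ``if and only if'' in the statement follows symmetrically: one can recover $\Gamma$ from $\Gamma^\rho$ by multiplying by $e^{\rho((s,t])}$, which corresponds to applying the same construction with the signed measure $-\rho$, though it is cleaner simply to note that the equivalence in \eqref{eq:keyinduction} goes both ways.
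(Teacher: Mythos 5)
Your proof is correct and follows essentially the same route as the paper: both establish the identity $(\Gamma^\rho)_l(s_l,\dots,s_1)=e^{-\rho((s_1,s_l])}\,\Gamma_l(s_l,\dots,s_1)$ by induction on $l$ (using the additivity of $\rho$ on consecutive half-open intervals) and then invoke Theorem~\ref{thm_char_pos} together with the strict positivity of the exponential factor to get the equivalence. No gaps; your write-up just spells out the induction step that the paper dismisses as obvious.
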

\begin{proof}
	We prove the first part and consider the associated functions $\Gamma_l:\Delta^l\to \R$ defined inductively by $\Gamma_2^\rho(s_2,s_1)=\frac{\Gamma^\rho(s_2,s_1)}{\Gamma^\rho(s_1,s_1)}=\Gamma_2(s_2,s_1)e^{-\rho((s_1,s_2])}$ and $$\Gamma^\rho_{l+1}(s_{l+1},\dots,s_1)=\Gamma^\rho_{l}(s_{l+1},\dots,s_3,s_1)-\Gamma^\rho_2(s_2,s_1)\Gamma^\rho_{l}(s_{l+1},\dots,s_2).$$ We get that $\Gamma^\rho_{l+1}(s_1,\dots,s_{l+1})=e^{-\rho((s_1,s_{l+1}])}\Gamma_{l+1}(s_1,\dots,s_{l+1})$: this is true for $l=1$ and then obvious by induction. Therefore $\Gamma_l\ge 0 \iff \Gamma^\rho_l\ge 0$, and we conclude by Theorem~\ref{thm_char_pos}. 
\end{proof}

\begin{remark}
    Let $\Gamma:\Delta \to \R_+$ be a kernel that preserves nonnegativity and  satisfies Assumption~\ref{ass:conditions1_kernel}. Let $f:\R_+^* \to \R_+$ be such that $\R_+ \ni t\mapsto \int_0^t f(u) {\ud u}$ is locally H\"older continuous.  Then, the kernel $\tilde{\Gamma}(t,s)=\Gamma(t,s)e^{-\int_s^t f(u){\ud u}}$ preserves nonnegativity by Corollary~\ref{cor_expoinvariance} and besides satisfies Assumption~\ref{ass:conditions1_kernel}. Indeed, for $T>0$ and $(t,s)\in \Delta_T$, we have
    \begin{align*}
&\int_s^t {\tilde{\Gamma}(t,u)^2\,\ud u} + \int_0^s {(\tilde{\Gamma}(t,u) - \tilde{\Gamma}(s,u))^2\,\ud u} \\
&\le \int_s^t {\Gamma(t,u)^2\,\ud u} + 2 \int_0^s e^{-2\int_u^s f(v){\ud v}} {(\Gamma(t,u) - \Gamma(s,u))^2\,\ud u}        +2\int_0^s \left(1-e^{-\int_s^t f(v){\ud v}} \right)^2{\Gamma(t,u)^2\,\ud u}     \\&
\le \int_s^t {\Gamma(t,u)^2\,\ud u} + 2 \int_0^s  {(\Gamma(t,u) - \Gamma(s,u))^2\,\ud u}        + 2 \int_0^s \left(\int_s^t f(v){\ud v} \right)^2{\Gamma(t,u)^2\,\ud u}.
    \end{align*}
    Since $\Gamma$ satisfies Assumption~\ref{ass:conditions1_kernel},  there exist $\eta>0,\gamma\in(0,1/2]$ such that the two first terms are upper bounded by $2\eta(t-s)^{2\gamma}$. Possibly considering a smaller $\gamma>0$, we may assume that there exists $C\in \R_+$ such that $\left|\int_s^t f(v){\ud v} \right|\le C(t-s)^\gamma$ for $(t,s)\in \Delta_T$. The third term is then upper bounded by $2\eta T^{2\gamma} C^2(t-s)^{2\gamma}$ since $\int_0^s {\Gamma(t,u)^2\,\ud u}\le \int_0^t {\Gamma(t,u)^2\,\ud u}\le \eta T^{2\gamma}$.
    \end{remark}

\begin{corollary}\label{C:tildegammapreserving}
	Let $T\in \R_+^*$ and $\Gamma:\Delta_T \to \R_+$ be a double kernel that preserves nonnegativity on $[0,T]$ and such that $\Gamma(s,s)=\gamma >0$ is constant for $s\in [0,T]$. Then, the double kernel $\tilde{\Gamma}(t,s)=\Gamma(T-s,T-t)$ for $(t,s)\in \Delta_T$ preserves nonnegativity on $[0,T]$. 
\end{corollary}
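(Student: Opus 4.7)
My plan is to invoke Theorem~\ref{thm_char_pos} applied to $\tilde\Gamma$. Since $\tilde\Gamma(s,s) = \Gamma(T-s, T-s) = \gamma > 0$ for every $s \in [0,T]$, the hypothesis of that theorem is satisfied, so it suffices to show that all the iterated functions $\tilde\Gamma_l$ defined by the recursion~\eqref{def_Gammal} applied to $\tilde\Gamma$ are nonnegative on $\mathring{\Delta}_T^l$ for every $l \ge 2$.

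The core step is to establish the identity
\begin{equation*}
\tilde\Gamma_l(s_l,\ldots,s_1) = \Gamma_l(T-s_1, T-s_2, \ldots, T-s_l), \qquad (s_l,\ldots,s_1) \in \mathring{\Delta}_T^l,
\end{equation*}
for all $l \ge 2$. Once this is in place the conclusion is immediate: the right-hand side is nonnegative by Theorem~\ref{thm_char_pos} applied to $\Gamma$ (using $\Gamma_l \ge 0$), and the tuple $(T-s_1,\ldots,T-s_l)$ lies in $\mathring{\Delta}_T^l$ whenever $(s_l,\ldots,s_1)$ does. The case $l=2$ is a one-line computation in which the constant-diagonal hypothesis enters decisively:
\begin{equation*}
\tilde\Gamma_2(s_2,s_1) = \frac{\Gamma(T-s_1, T-s_2)}{\gamma} = \frac{\Gamma(T-s_1, T-s_2)}{\Gamma(T-s_2, T-s_2)} = \Gamma_2(T-s_1, T-s_2).
\end{equation*}
Without $\Gamma(\cdot,\cdot)=\gamma$ constant on the diagonal, the normalizing denominators $\Gamma(T-s_1, T-s_1)$ and $\Gamma(T-s_2, T-s_2)$ would disagree and the identity would fail already at $l=2$.

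For general $l$ my approach is to first prove, by induction on $l$ via the recursion~\eqref{def_Gammal}, a \emph{path-sum formula}
\begin{equation*}
\Gamma_l(a_l,\ldots,a_1) = \sum_{\substack{S \subseteq \{2,\ldots,l-1\}\\ S = \{i_1<\cdots<i_k\}}} (-1)^{|S|}\, \Gamma_2(a_l, a_{i_k})\Gamma_2(a_{i_k}, a_{i_{k-1}}) \cdots \Gamma_2(a_{i_1}, a_1),
\end{equation*}
with the empty subset contributing the direct term $\Gamma_2(a_l, a_1)$. The inductive step splits subsets $S' \subseteq \{2,\ldots,l\}$ appearing in $\Gamma_{l+1}$ according to whether $2 \in S'$ or not, matching the two pieces with the two terms in the defining recursion. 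The same path-sum formula holds verbatim for $\tilde\Gamma_l$ with $\tilde\Gamma_2$ in place of $\Gamma_2$. Combining both path-sums with the base-case identity $\tilde\Gamma_2(s_i,s_j) = \Gamma_2(T-s_j, T-s_i)$ and using that scalar products commute, each subset $S$ indexes the same "path" in both pictures: an edge $s_i \to s_j$ in the $s$-picture weights $\tilde\Gamma_2(s_i,s_j) = \Gamma_2(T-s_j, T-s_i)$, which is precisely the weight of the corresponding edge $T-s_j \to T-s_i$ in the $T-s$ picture. Term-by-term matching then yields the identity.

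The main obstacle is purely combinatorial bookkeeping: the defining recursion~\eqref{def_Gammal} for $\Gamma_{l+1}$ peels off the lowest indices $s_1, s_2$, whereas after time reversal these become the highest arguments $T-s_1, T-s_2$, so a naive induction through the recursion does not close cleanly. The path-sum formula—symmetric under reversal of the path orientation—sidesteps this asymmetry and provides the natural framework for the argument, after which everything reduces to verifying the bijection between intermediate vertex sets.
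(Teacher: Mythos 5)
Your proposal is correct and follows essentially the same route as the paper: invoke Theorem~\ref{thm_char_pos}, prove the identity $\tilde\Gamma_l(s_l,\dots,s_1)=\Gamma_l(T-s_1,\dots,T-s_l)$ via the base case $\tilde\Gamma_2(s_2,s_1)=\Gamma_2(T-s_1,T-s_2)$ (where the constant diagonal is used), and establish the reversal-symmetric non-recursive expansion of $\Gamma_l$ by induction — your ``path-sum formula'' is exactly the paper's formula~\eqref{Gamma_l_nonrec}, verified by the same split on whether the index $2$ appears among the intermediate indices.
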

\begin{proof}
	We prove that $\tilde{\Gamma}_l(s_l,\dots,s_1)=\Gamma_l(T-s_1,\dots,T-s_l)$, which shows then the claim by Theorem~\ref{thm_char_pos}.  
	
	For $l=2$, we have for $(s_2,s_1) \in \mathring{\Delta}^2_T$, $$\tilde{\Gamma}_2(s_2,s_1)=\frac{\Gamma(T-s_1,T-s_2)}{\Gamma(T-s_1,T-s_1)}=\frac{\Gamma(T-s_1,T-s_2)}{\Gamma(T-s_2,T-s_2)}=\Gamma_2(T-s_1,T-s_2),$$
	by using that $\Gamma(T-s_1,T-s_1)=\Gamma(T-s_2,T-s_2)=\gamma$.
	
	For $l> 2$, we prove by induction the following formula 
	\begin{align}\label{Gamma_l_nonrec}
	\Gamma_{l}(s_l,\dots,s_1)=\Gamma_2(s_l,s_1)+\sum_{j=1}^{l-2} (-1)^j \sum_{1<l'_1\dots<l'_j<l}\Gamma_2(s_l,s_{l'_j})\times\dots\times \Gamma_2(s_{l'_1},s_1), 
	\end{align}
	which then directly gives $\tilde{\Gamma}_l(s_l,\dots,s_1)=\Gamma_l(T-s_1,\dots,T-s_l)$ by using that $\tilde{\Gamma}_2(t,s)=\Gamma_2(T-s,T-t)$ for $0<s<t<T$. The formula is clearly true for $l=3$. Let us assume it true for $l$, then we have by~\eqref{def_Gammal}
	\begin{align}
	\Gamma_{l+1}(s_{l+1},s_l,\dots,s_1)=&\Gamma_2(s_{l+1},s_1)+\sum_{j=1}^{l-2} (-1)^j \sum_{2<l'_1\dots<l'_j<l+1}\Gamma_2(s_{l+1},s_{l'_j})\dots \Gamma_2(s_{l'_1},s_1) \\& -\Gamma_2(s_2,s_1)\left(\Gamma_2(s_{l+1},s_2)+\sum_{j=1}^{l-2} (-1)^j \sum_{2<l'_1\dots<l'_j<l+1} \Gamma_2(s_{l+1},s_{l'_j})\dots \Gamma_2(s_{l'_1},s_2)\right).
	\end{align}
	We note that  
	$$ - \sum_{2<l'_1<l+1}\Gamma_2(s_{l+1},s_{l'_1})\Gamma_2(s_{l'_1},s_1) -\Gamma_2(s_2,s_1) \Gamma_2(s_{l+1},s_2)= - \sum_{1<l'_1<l+1}\Gamma_2(s_{l+1},s_{l'_1})\Gamma_2(s_{l'_1},s_1),$$
	and for $j>1$,
	\begin{align*}
	& (-1)^j \sum_{2<l'_1\dots<l'_j<l+1}\Gamma_2(s_{l+1},s_{l'_j})\dots \Gamma_2(s_{l'_1},s_1) - \Gamma_2(s_2,s_1) (-1)^{j-1}\sum_{2<l'_1\dots<l'_{j-1}<l+1}\Gamma_2(s_{l+1},s_{l'_{j-1}})\dots \Gamma_2(s_{l'_1},s_1)\\
	&=(-1)^j \sum_{1<l'_1\dots<l'_j<l+1}\Gamma_2(s_{l+1},s_{l'_j})\dots \Gamma_2(s_{l'_1},s_1),
	\end{align*}
	which proves~\eqref{Gamma_l_nonrec}.
\end{proof}

We finally prove in this subsection Corollary~\ref{cor_convolution} that allows to get nonnegativity preserving double kernels from any nonnegativity preserving convolution kernel.
\begin{proof}[Proof of Corollary~\ref{cor_convolution}]
	Let us define $G_1(a)=G(a)/G(0)$ for $a>0$ and, for $l\ge 2$ and $a_1,\dots,a_l>0$, $G_{l}(a_1,\dots,a_l)=G_{l-1}(a_1,\dots,a_{l-2},a_{l-1}+a_{l})-G_1(a_l)G_{l-1}(a_1,\dots,a_{l-2},a_{l-1})$.  From~\cite[Theorem 2.6]{Alfonsi23}, these functions are nonnegative since $G$ preserves nonnegativity. We now extend these functions to $a_1,\dots,a_l\ge 0$ with the same induction formula. We still have $G_l(a_1,\dots,a_l)\ge 0$: indeed, we check easily by induction on~$l$ that $G_l(a_1,\dots,a_l)=0$ if $a_1=0$ or $a_{l}=0$ and $G_l(a_1,\dots,a_l)=G_{m}(a_1, a_{n_2},\dots,a_{n_{m-1}}, a_l)$ where $1=n_1<n_2\dots<n_m=l$ are the indices for which $a$ is positive, i.e. $\{n_k,1\le k \le m\}=\{i : a_i>0\}$.   
	
	We now calculate the functions $\Gamma^r_l$. We have $\Gamma_2^r(s_2,s_1)=G(\rho((s_1,s_2]))/G(0)=G_1(\rho((s_1,s_2]))$ for $0<s_1<s_2$. We now prove by induction on $l$ that for $l\ge 2$ and $0<s_1<\dots<s_l$, 
	$$ \Gamma^r_l(s_l,\dots,s_1)=G_{l-1}\left(\rho((s_{l-1},s_l]), \dots,\rho((s_1,s_2])\right).$$
	Let $l\ge  2$ and $0<s_1<\dots<s_{l+1}$. We have from~\eqref{def_Gammal}
	\begin{align*}
		&\Gamma_{l+1}^r(s_{l+1},\dots, s_1)= \Gamma_{l}^r(s_{l+1},\dots,s_3, s_1)-\Gamma_2^r(s_2,s_1)\Gamma_{l}^r(s_{l+1},\dots, s_2) \\
		&=G_{l-1}\left(\rho((s_{l},s_{l+1}]), \dots,\rho((s_3,s_4]),\rho((s_1,s_3])\right)-G_1(\rho((s_1,s_2]))G_{l-1}\left(\rho((s_{l},s_{l+1}]), \dots,\rho((s_2,s_3])\right) \\
		&=G_{l}\left(\rho((s_{l},s_{l+1}]), \dots,\rho((s_1,s_2])\right),
	\end{align*}
	by using the induction hypothesis and then the definition of $G_{l}$ with $\rho((s_1,s_3])=\rho((s_1,s_2])+\rho((s_2,s_3])$. We show similarly that $ \Gamma^\ell_l(s_l,\dots,s_1)=G_{l-1}\left(\rho([s_{l-1},s_l)), \dots,\rho([s_1,s_2))\right)$, which shows that $\Gamma^\ell_l,\Gamma^r_l\ge 0$ for any $l\ge 2$. This gives the claim by Theorem~\ref{thm_char_pos}.
\end{proof}

\subsection{Proof of the characterization (Theorem~\ref{thm_char_pos})}
For a double kernel~$\Gamma:\Delta_T \to \R_+$, we denote
\begin{align}
\EKG(T):=\Bigg\{ (x_1,\dots,x_K,t_1,\dots,t_K) \in \R^{2K}& : 0\le t_1< \dots<t_K<T  \text{ and } \label{def_EKG}\\ 
&\forall k \in \{1, \dots, K\},\ \sum_{k'=1}^k x_{k'}\Gamma(t_k,t_{k'})\ge 0 \Bigg\}.\notag
\end{align}

\begin{lemma}\label{lem_char_pos} Let $T\in (0,+\infty]$ and $\Gamma:\Delta_T \to \R_+$ such that $\Gamma(s,s)> 0$ for $s\ge 0$. Let $K\in \N^*$ and  $0\le t_1 <\dots <t_K < t_{K+1}=t \le T$ with $t<+\infty$. Then,
	\begin{align}& \left( (x_1,\dots,x_K,t_1,\dots,t_K)\in \EKG(T) \implies \sum_{k=1}^K x_k\Gamma(t,t_k)\ge 0 \right) \label{implic} \\ \iff &\forall i\in \{1,\dots,K\}, \Gamma_{i+1}(t_{K+1},\dots,t_{K+1-i})\ge 0. \notag
	\end{align}
	Let $0\le t_1 <\dots <t_K<T$ and define $x_1=1$ and  $x_k=\frac{-1}{\Gamma(t_k,t_k)} \sum_{k'=1}^{k-1} x_{k'}\Gamma(t_k,t_{k'})$ for $k\ge 2$, so that $\sum_{k'=1}^k x_{k'}\Gamma(t_k,t_{k'})=0$. Then, for $k\in \{1,\dots,K-1\}$, we have
	$$x_{k+1}=-\frac{\Gamma(t_1,t_1)}{\Gamma(t_{k+1},t_{k+1})}\Gamma_{k+1}(t_{k+1},\dots,t_1).$$
\end{lemma}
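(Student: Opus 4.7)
The plan is to establish part~2 first by strong induction on $k$, and then derive part~1 as a consequence via a bijective change of variables. It is convenient to state the inductive claim for a generic strictly increasing sequence: for any $0 \leq s_1 < \cdots < s_n$ with $n \geq 2$, the unique $(x_1,\ldots,x_n)$ with $x_1 = 1$ and $\sum_{k' \leq j} x_{k'}\Gamma(s_j, s_{k'}) = 0$ for $2 \leq j \leq n$ satisfies $x_n = -\frac{\Gamma(s_1, s_1)}{\Gamma(s_n, s_n)} \Gamma_n(s_n, \ldots, s_1)$. The base case $n = 2$ is immediate from $x_2 \Gamma(s_2, s_2) + \Gamma(s_2, s_1) = 0$ and $\Gamma_2(s_2, s_1) = \Gamma(s_2, s_1)/\Gamma(s_1, s_1)$.

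For the inductive step $n \to n+1$, the key ingredient is a linear decomposition. Let $\hat x = (\hat x_1, \dots, \hat x_n)$ be the canonical sequence associated with the truncated times $(t_1, t_3, t_4, \dots, t_{n+1})$ (skipping $t_2$), and $\tilde x = (\tilde x_1, \dots, \tilde x_n)$ the one associated with $(t_2, t_3, \dots, t_{n+1})$. I would extend $\hat x$ to a length-$(n+1)$ vector by inserting a zero at position~$2$, and extend $\tilde x$ by prepending a zero at position~$1$. A direct re-indexing shows that both extensions satisfy the full length-$(n+1)$ recursion at every time $t_j$ with $j \geq 3$, while at time $t_2$ the extension of $\hat x$ produces the residual $\Gamma(t_2, t_1)$ and the extension of $\tilde x$ produces $\Gamma(t_2, t_2)$. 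The combination obtained by subtracting $\frac{\Gamma(t_2, t_1)}{\Gamma(t_2, t_2)}$ times the extension of $\tilde x$ from the extension of $\hat x$ therefore satisfies the full system and, by uniqueness, equals the length-$(n+1)$ canonical sequence $x$. Reading off the last coordinate gives $x_{n+1} = \hat x_n - \frac{\Gamma(t_2, t_1)}{\Gamma(t_2, t_2)} \tilde x_n$; substituting the inductive formulas for $\hat x_n$ and $\tilde x_n$ and invoking the recursion~\eqref{def_Gammal} for $\Gamma_{n+1}$ yields the desired identity.

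For part~1, I would perform the bijective change of variables $y_j := \sum_{k' \leq j} x_{k'} \Gamma(t_j, t_{k'})$ for $1 \leq j \leq K$; the underlying matrix is lower triangular with strictly positive diagonal, so this identifies the slice of $\EKG(T)$ at fixed times $(t_1, \ldots, t_K)$ with $\R_+^K$. Writing the target functional $x \mapsto \sum_{k=1}^K x_k \Gamma(t_{K+1}, t_k)$ as $\sum_k c_k y_k$ in these coordinates, the desired implication is equivalent to $c_k \geq 0$ for every $k$. Setting $y_k = 1$ and $y_j = 0$ for $j \neq k$ forces $x_j = 0$ for $j < k$ and $x_k = 1/\Gamma(t_k, t_k)$, and makes the rescaled tail $\Gamma(t_k, t_k)\,(x_k, x_{k+1}, \ldots, x_K)$ the canonical sequence on the shifted times $(t_k, \ldots, t_K)$. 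Extending this shifted sequence by one additional point $t_{K+1}$ and applying part~2 (with length $K + 2 - k$) then yields $c_k = \Gamma_{K+2-k}(t_{K+1}, t_K, \ldots, t_k)$; the substitution $i = K+1-k$ delivers exactly the equivalence claimed in the statement.

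The main obstacle is the decomposition argument at the heart of the induction in part~2: one has to verify, with careful re-indexing, that both padded extensions of $\hat x$ and $\tilde x$ solve the full length-$(n+1)$ recursion at every $t_j$ with $j \geq 3$, and then identify the precise scalar $-\Gamma(t_2, t_1)/\Gamma(t_2, t_2)$ that restores the equation at $t_2$. Once this is in place, the rest of the proof is routine bookkeeping together with a single application of the recursion~\eqref{def_Gammal}.
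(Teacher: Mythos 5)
Your proof is correct, and I checked the two delicate points: the padded extensions of $\hat x$ and $\tilde x$ do satisfy the length-$(n+1)$ triangular system at every $t_j$ with $j\ge 3$, the residuals at $t_2$ are $\Gamma(t_2,t_1)$ and $\Gamma(t_2,t_2)$ respectively, so the combination with scalar $\Gamma(t_2,t_1)/\Gamma(t_2,t_2)$ is, by uniqueness, the canonical sequence, and the resulting identity $x_{n+1}=\hat x_n-\frac{\Gamma(t_2,t_1)}{\Gamma(t_2,t_2)}\tilde x_n$ reproduces exactly the recursion \eqref{def_Gammal}; likewise the computation $c_k=\Gamma_{K+2-k}(t_{K+1},\dots,t_k)$ via $y=e_k$ is right. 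Your route differs from the paper's mainly in logical order and in how the coefficients are identified. The paper starts with the first assertion: it writes the objective $\sum_k x_k\Gamma(t,t_k)$ as a combination $\sum_k\beta_k\bigl(\sum_{k'\le k}x_{k'}\Gamma(t_k,t_{k'})\bigr)$ of the constraints, determines the $\beta_k$'s from the triangular dual system \eqref{def_betas} by Gaussian elimination, obtaining $\beta_{K+1-i}=\Gamma_{i+1}(t,t_K,\dots,t_{K+1-i})$, and then deduces the second assertion by evaluating \eqref{lin_to_opt} at the canonical null sequence, where only the $\beta_1$-term survives. You reverse this: you prove the formula for $x_{k+1}$ first, by induction on the length through a superposition of two shorter canonical sequences (a decomposition that mirrors \eqref{def_Gammal} directly), and then recover the dual coefficients from it by testing unit vectors after the same triangular change of variables; the common core in both proofs is the elementary duality step that nonnegativity of the functional on the triangular cone is equivalent to nonnegativity of all dual coefficients. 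The paper's elimination computes all $\beta$'s in one sweep; your version avoids solving the dual system explicitly and makes the interpretation of $\Gamma_l$ as (up to the factor $-\Gamma(t_1,t_1)/\Gamma(t_l,t_l)$) the last entry of the canonical null sequence the primitive object, at the price of the re-indexing bookkeeping you flagged. One small point to state explicitly when concluding part 2 of the lemma: for intermediate indices, the first $k+1$ entries of the canonical sequence on $(t_1,\dots,t_K)$ coincide, by triangularity, with the canonical sequence on $(t_1,\dots,t_{k+1})$, so your generic last-coordinate formula indeed gives $x_{k+1}$ for every $k\in\{1,\dots,K-1\}$.
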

Lemma~\ref{lem_char_pos} allows to characterize nonnegativity preserving double kernels, which amounts to having the implication~\eqref{implic} for all $K\in \N$ and $0\le t_1<\dots< t_K< t$. 

\begin{proof}[Proof of Lemma~\ref{lem_char_pos}.]
	We consider the following minimization problem,  for $t> t_K$:
	\begin{equation}  \label{min_pb} \inf_{(x_1,\dots,x_K): (x_1,\dots,x_K,t_1,\dots,t_K)\in \EKG}\ \sum_{k=1}^Kx_k\Gamma(t,t_k).
	\end{equation}
	Since the set of constraints is linear and triangular, we can write the linear function to optimize as a linear combination of the constraints:
	\begin{equation}\label{lin_to_opt}
	\sum_{k=1}^Kx_k\Gamma(t,t_k)= \sum_{k=1}^K \beta_k \left( \sum_{k'=1}^k x_{k'}\Gamma (t_k,t_{k'})\right), 
	\end{equation}
	with
	\begin{equation}\label{def_betas}
	\forall k' \in \{1,\dots, K \}, \sum_{k=k'}^K \beta_k \Gamma(t_k,t_{k'})  =\Gamma(t,t_{k'}).
	\end{equation}
	This leads to
	\begin{equation}
	\begin{cases}
	\sum_{k=1}^K \beta_k \Gamma_2(t_k,t_1)  =\Gamma_2(t,t_1)\\
	\sum_{k=2}^K \beta_k \Gamma_2(t_k,t_2)  =\Gamma_2(t,t_2) \\
	\phantom{\sum_{k=2}^K \beta_k G_1(t_k-t_2) }  \vdots \\
	\beta_{K-1}+\beta_K \Gamma_2(t_K,t_{K-1})=\Gamma_2(t,t_{K-1})\\
	\beta_K=\Gamma_2(t,t_K).
	\end{cases}
	\end{equation}
	Using the Gauss elimination method, we get by replacing $\beta_K$ by $\Gamma_2(t,t_K)$:
	\begin{equation}
	\begin{cases}
	\sum_{k=1}^{K-1} \beta_k \Gamma_2(t_1,t_k)  =\Gamma_3(t,t_K,t_1)\\
	\sum_{k=2}^{K-1} \beta_k \Gamma_2(t_k,t_2)  =\Gamma_3(t,t_K,t_2) \\
	\phantom{\sum_{k=2}^K \beta_k G_1(t_k-t_2) }  \vdots \\
	\beta_{K-2}+ \beta_{K-1}\Gamma(t_{K-1},t_{K-2})=\Gamma_3(t,t_K,t_{K-2})\\
	\beta_{K-1}=\Gamma_2(t,t_{K-1})-\Gamma_2(t,t_K) \Gamma_2(t_K,t_{K-1})=\Gamma_3(t,t_K,t_{K-1})\\
	\beta_K=\Gamma_2(t,t_K).
	\end{cases}
	\end{equation}
	Going on the Gauss elimination, we end up with
	$$\beta_{K+1-i}=\Gamma_{i+1}(t,t_K,\dots,t_{K+1-i}), \ i \in \{1,\dots, K\}.$$ 
	To conclude, it remains to observe that the infimum~\eqref{min_pb} is nonnegative if, and only if $\beta_1,\dots,\beta_K\ge 0$.
	
	We now prove the second part of the statement. Let us consider  $x_1=1$, and $x_k=-\frac{1}{\Gamma(t_k,t_k)}\sum_{k'=1}^{k-1}\Gamma(t_k,t_{k'})x_{k'}$ for $k\ge 2$, so that $\sum_{k'=1}^k\Gamma(t_k,t_{k'})x_{k'}=0$ for $k\ge 2$. From~\eqref{lin_to_opt}, we get for $t=t_{K+1}$ that $x_{K+1}$ satisfies:
	\begin{align*}
	\Gamma(t_{K+1},t_{K+1})x_{K+1}=-\sum_{k=1}^{K}\Gamma(t_{K+1},t_K)x_{k}&=-\sum_{k=1}^K \beta_k \left( \sum_{k'=1}^k x_{k'}\Gamma(t_k,t_{k'})\right) \\& = - \beta_1\Gamma(t_1,t_1)= -\Gamma(t_1,t_1)\Gamma_{K+1}(t_{K+1},\dots,t_1),
	\end{align*}
	leading to $x_{K+1}=-\frac{\Gamma(t_1,t_1)}{\Gamma(t_{K+1},t_{K+1})}\Gamma_{K+1}(t_{K+1},\dots,t_1)$. The parameter $K\in \N^*$ being arbitrary, this gives the claim.
\end{proof}
\begin{proof}[Proof of Theorem~\ref{thm_char_pos}] The double kernel $\Gamma$ preserves nonnegativity if the infimum~\eqref{min_pb} is nonnegative for any $K\in \N^*$, any $0 \le t_1<\dots<t_K<t\le T$ with $t<+\infty$, which gives the necessary and sufficient condition by Lemma~\ref{lem_char_pos}.
\end{proof}

\subsection{Completely monotone double kernels}
We prove in this paragraph that completely monotone double kernels preserve nonnegativity. 
\begin{proof}[Proof of Theorem~\ref{thm:completely_monotone_double}] The proof is written in the first case $\Gamma(t,s) = \int_{\R} {e^{-\rho(\alpha, (s,t])}\,\mu(\ud\alpha)}$, and is analogous for the second one. It generalizes the proof of~\cite[Theorem 2.11]{Alfonsi23} to double kernels. 
	By Theorem~\ref{thm_char_pos}, it is sufficient to check that the functions~$\Gamma_l$ are nonnegative. To do so, we use the second statement of Lemma~\ref{lem_char_pos}. Let $(t_k)_{k\in \N^*}$ be an increasing sequence of nonnegative real numbers, $x_1=1$ and  $x_k=\frac{-1}{\Gamma(t_k,t_k)} \sum_{k'=1}^{k-1} x_{k'}\Gamma(t_k,t_{k'})$ for $k\ge 2$. Our goal is to prove that $x_k\le 0$ for all~$k\ge 2$, which gives by Lemma~\ref{lem_char_pos} the nonnegativity of the functions $\Gamma_l$ since the sequence $(t_k)_{k\in \N^*}$ is arbitrary. 
	
	We define, for $\alpha\in \R$, $X^\alpha_t:=\sum_{k: t_k\le t}x_k\,e^{-\rho(\alpha, (t_k, t])}$ and $X_t:=\int_{\R} X^\alpha_t\,\mu(\ud\alpha)$, so that $X_t=\sum_{k: t_k\le t}x_k\,\Gamma(t,t_k)$. In particular, we have $X^\alpha_{t_k}=\sum_{j=1}^k x_j\,e^{-\rho(\alpha, (t_j, t_k])}$ for every $k\geq1$ and $X_{t_k}=0$ for every $k\geq2$. Let $A_k := \{\alpha\in\R:X_{t_k}^{\alpha}>0\}$, we show by induction on $k\geq2$ that
	\begin{itemize}
		\item $x_k\leq0$;
		\item $A_k \subset A_{k-1}$ and either of the following two conditions holds:
		\begin{itemize}
			\item if $\mu(A_k)=0$, then $X_{t_k}^{\alpha} = 0$ $\mu$-a.e.;
			\item if else $\mu(A_k)>0$, then there exists $a_k\in\R$ such that $A_k = (-\infty, a_k)$ or $(-\infty, a_k]$, and $A_k\ni\alpha \mapsto X_{t_k}^{\alpha}$ is nonincreasing.
		\end{itemize}
	\end{itemize}
	
	We first treat the case $k=2$. By construction $x_2 = -\frac{1}{\mu(\R)}\int_{\R} {e^{-\rho(\alpha, (t_1, t_2])}\,\mu(\ud\alpha)}<0$ and, since $X_{t_1}^{\alpha} = 1$ for all $\alpha\in\R$, we have $A_2 \subset A_1 = \R$. If $\mu(A_2)=0$, then $X_{t_2}^{\alpha}\leq0$ $\mu$-a.e. and, since $X_{t_2} = \int_{\R} {X_{t_2}^{\alpha}\,\mu(\ud\alpha)} = 0$, we have $X_{t_2}^{\alpha}=0$ $\mu$-a.e.. Suppose now that $\mu(A_2)>0$. Observing that 
	\begin{equation*}
	X_{t_2}^{\alpha} = e^{-\rho(\alpha, (t_1, t_2])} + x_2,
	\end{equation*}
	for all $\alpha\in\R$, the monotonicity of $\alpha\mapsto\rho(\alpha,\cdot)$ ensures that $\alpha\mapsto X_{t_2}^{\alpha}$ is nonincreasing on $A_1=\R$, in particular on $A_2$. Then, there exists $a_2\in\R$ such that $A_2 = (-\infty, a_2)$ or $(-\infty, a_2]$.
	
	Let us at present assume that the induction hypothesis is valid for some $k>2$. We first prove that $x_{k+1} \leq 0$. Observe that by construction,
	\begin{equation}\label{eq:xk+1}
	x_{k+1} = -\frac{1}{\mu(\R)}\int_{\R} {e^{-\rho(\alpha, (t_k, t_{k+1}])}\,X_{t_k}^{\alpha}\,\mu(\ud\alpha)}.
	\end{equation}
	Using the induction hypothesis, we get from \eqref{eq:xk+1} that $x_{k+1}=0$ if $\mu(A_k)=0$. If else $\mu(A_k)>0$, since $A_k = (-\infty, a_k)$ or $(-\infty, a_k]$ by hypothesis, we have $X_{t_k}^{\alpha} > 0$ for $\alpha < a_k$ and $X_{t_k}^{\alpha} \leq 0$ for $\alpha > a_k$. Using the monotonicity of $\alpha\mapsto\rho(\alpha,\cdot)$, it holds that $e^{-\rho(\alpha, (t_k, t_{k+1}])}\,X_{t_k}^{\alpha} \geq e^{-\rho(a_k, (t_k, t_{k+1}])}\,X_{t_k}^{\alpha}$ for all $\alpha\in\R$. Injecting this into \eqref{eq:xk+1},
	\begin{equation*}
	-x_{k+1} \geq \frac{1}{\mu(\R)}e^{-\rho(a_k, (t_k, t_{k+1}])}\int_{\R} {X_{t_k}^{\alpha}\,\mu(\ud\alpha)} =0,
	\end{equation*}
	where $X_{t_k} = \int_{\R} {X_{t_k}^{\alpha}\,\mu(\ud\alpha)} = 0$. Consider then $A_{k+1} := \{\alpha\in\R:X_{t_{k+1}}^{\alpha}>0\}$. Observing that 
	\begin{equation*}
	X_{t_{k+1}}^{\alpha} = x_{k+1} + e^{-\rho(\alpha, (t_k, t_{k+1}])}\,X_{t_k}^{\alpha},
	\end{equation*}
	we get that if $\alpha \notin A_k$, then $X_{t_{k+1}}^{\alpha} \leq 0$ as $x_{k+1} \leq 0$ and $X_{t_k}^{\alpha} \leq 0$, hence $\alpha \notin A_{k+1}$. This ensures that $A_{k+1} \subset A_k$. As before, if $\mu(A_{k+1})=0$, then $X_{t_{k+1}}^{\alpha}\leq0$ $\mu$-a.e. and then $X_{t_{k+1}}^{\alpha}=0$ $\mu$-a.e. as $X_{t_{k+1}}=0$. If else $\mu(A_{k+1}) > 0$, then we must have $\mu(A_k) > 0$ as $A_{k+1} \subset A_k$. By the induction hypothesis, it holds that $\alpha \mapsto X_{t_k}^{\alpha}$ is nonincreasing on $A_k$ with $A_k = (-\infty, a_k)$ or $(-\infty, a_k]$. By using again the monotonicity of $\alpha\mapsto\rho(\alpha,\cdot)$, we have that $\alpha \mapsto X_{t_{k+1}}^{\alpha}$ is nonincreasing on $A_k$ as well, then on $A_{k+1}$. Hence, there exists $a_{k+1}\le a_k$ such that $A_{k+1} = (-\infty, a_{k+1})$ or $(-\infty, a_{k+1}]$, thus proving the induction hypothesis.
\end{proof}

\appendix 

\section{Invariance/viability for SDEs}\label{A:invarianceSDE}
In this appendix, we collect some characterizations of invariance and viability for the stochastic differential equation \eqref{SDE_xi_lambda} for $\lambda >0$, that is conditions on $(b,\sigma)$ that are equivalent to \eqref{WSI_lambda}.  We denote by $C(x)=\sigma(x)\sigma(x)^\top$ and we recall that $\mathscr{C}$ is a closed convex set of $\mathbb R^d$ and we assume Assumption~\ref{ass:conditions1_coefficients}. In words, at the boundary points, the diffusion matrix has to be tangential to boundary and a compensated drift needs to be inward pointing. 

\begin{enumerate}
	\item 
	 \cite{bardi2002geometric} use  Nagumo-type geometric conditions on the second order normal cone: their main result states that the closed set $\mathscr{C}$ is  {stochastically invariant} for \eqref{SDE_xi_lambda} if and only if
 \begin{equation*}\label{eqbardi}
\lambda u^\top b(x) + \frac{\lambda^2}{2} \mbox{Tr}(vC(x)) \leq 0, \quad x \in \mathscr{C} \mbox{ and } (u,v) \in \mathcal{N}^2_{\mathscr{C}}(x), 
 \end{equation*}
and $ \mathcal{N}^2_{\mathscr{C}}(x)$ is the second order normal cone at the point $x$: 
\begin{equation}\label{eq: def second order cone}	
\mathcal{N}^2_{\mathcal{\mathscr{C}}}(x) :=\left\{(u,v) \in \mathbb{R}^d \times \mathbb{S}^d: \langle u, y-x \rangle + \frac{1}{2} \langle y-x , v (y-x) \rangle\leq 0, {\forall\; y \in \mathscr{C}} \right\}.
\end{equation}
Here, $\mathbb{S}^d$ {stands for} the cone of symmetric {$d\times d$} matrices. 
This also corresponds to the positive maximum principle of \cite{ethier2009markov}.
\item
\cite{doss1977liens} and \cite{da2004invariance} {give} necessary and sufficient conditions for  the  stochastic invariance in terms of  the Stratonovich drift - whenever $\sigma$ is differentiable - and the first order normal cone:
\begin{equation}\label{dapratocondintro}
	\sigma(x)^\top u =0   \mbox{ and }  
	\langle u, \lambda b(x)-\frac{\lambda^2}{2} \sum_{j=1}^{d} D\sigma^j(x)\sigma^j(x)  \rangle \leq 0, \quad  x \in \mathscr{C}, \; u \in \mathcal{N}^1_{\mathscr{C}}(x),
	\end{equation}	  
	where $\sigma^j(x)$ denotes the $j$-th column of the matrix $\sigma(x)$, $D\sigma^j$ is the Jacobian of $\sigma^j$, and the first order normal cone   $\mathcal{N}^1_{\mathcal{D}}(x)$ at $x$ is defined as
\begin{equation}\label{eq: def first order cone}
		\mathcal{N}^1_{\mathscr{C}}(x) :=\left\{u \in \mathbb{R}^d: \langle u, y-x \rangle \leq  0, {\forall\; y \in \mathscr{C}} \right\}.
\end{equation}
\item \cite*{abi2019stochastic} provide a first order characterization under weaker regularity assumptions on $\sigma$, assuming that $C=\sigma\sigma^{\top}$ is differentiable: 
\begin{equation}\label{eq: nec suff cond intro}
	C(x) u =0   \mbox{ and }  
	\langle  u, \lambda b(x)-\frac{\lambda^2}{2} \sum_{j=1}^{d} D C^j(x)(CC^+)^j(x)    \rangle \leq 0, \quad x \in \mathscr{C} \mbox{ and } u \in \mathcal{N}^1_{\mathscr{C}}(x).
	\end{equation}
 Here,  $(CC^+)^j(x)$ is the $j$-th column of $(CC^{+})(x)$ with $C(x)^+$ defined as the Moore-Penrose pseudoinverse of $C(x)$. An advantage of such formulation over the Stratonovich one, is that it covers the case of square-root diffusions of the form $\sigma(x) = \sqrt{f(x)}$ with $f$ differentiable, for instance $\sigma(x)= \sqrt{x}$. 
\end{enumerate}

In practice, the first order normal  cone is much  simpler to compute than the second order {cone}. However, the price to pay is to impose a stronger regularity {conditions} on the diffusion or covariance matrices $\sigma$ and $C$.

\section{Kernel approximation lemma}\label{A:proofkernel}

\begin{proof}[Proof of Lemma~\ref{lem_approx_kernel}]

We first define for $s,t\in[0,T]$:
$$\tilde{\Gamma}_M(t,s)=\sum_{i=1}^{2^M} \mathbf{1}_{s \in [(i-1)\frac T {2^M}, i \frac T {2^M}[ } \frac{2^M}{T}\int_{(i-1)\frac T {2^M}}^{i\frac T {2^M}} \Gamma(t,u) \ud u,$$
where we set $\Gamma(t,u)=0$ if $u>t$. Note that it is piecewise constant with respect to~$s$, and that $\tilde{\Gamma}_M(t,s)=0$ for $s\ge i\frac T {2^M}$ when $t\in[ (i-1)\frac T {2^M}, i \frac T {2^M}[$. Let $U$ be a uniform random variable on~$[0,T]$. We have $\tilde{\Gamma}_M(t,U)=\E[\Gamma(t,U)|\lfloor 2^M U/T\rfloor ]$ almost surely since $\E[\Gamma(t,U)|\lfloor 2^M U/T\rfloor =j]=\E[\Gamma(t,U)|U \in [j \frac{T}{2^M},(j+1) \frac{T}{2^M})]=\frac{2^M}{T}\int_{j \frac T {2^M}}^{(j+1)\frac T {2^M}} \Gamma(t,u) \ud u$. Besides, it is a square integrable $(\mathcal{G}_M)$-martingale with $\mathcal{G}_M=\sigma(\lfloor 2^M U/T\rfloor)$  since $\E[\tilde{\Gamma}_M(t,U)^2]\le \E[\Gamma(t,U)^2]$ by Jensen inequality and we have $\mathcal{G}_M\subset \mathcal{G}_{M+1}$ from $\lfloor 2^M U/T\rfloor=j\iff \lfloor 2^{M+1} U/T\rfloor\in\{2j,2j+1\}$. Therefore, we obtain for $t\in[0,T]$ 
\begin{align*}
\int_0^t(\Gamma(t,s)-\tilde{\Gamma}_M(t,s))^2 \ud s &\le \int_0^T(\Gamma(t,s)-\tilde{\Gamma}_M(t,s))^2 \ud s \\
&= T\E \left[\left(\Gamma(t,U) - \E[\Gamma(t,U)|\lfloor 2^M U/T\rfloor ] \right)^2\right]\to_{M\to \infty} 0,
\end{align*}
by Lévy's upward lemma. The function $\tilde{\Gamma}_M$ is thus a piecewise constant approximation of~$\Gamma$ for the norm $L^2$. We now construct  $\hat{\Gamma}_M(t,s)$ that also approximates $\Gamma$ and is besides continuous. 

For $M\in \N$, let $\psi_M(t):\R \to \R_+$ be defined by $\psi_M(t)=0$ if $t \not \in [0,1]$, $\psi_M(t)=1$ if $t  \in [1/(M+2),1-1/(M+2)]$, $\psi_M(t)=(M+2)t$ if $t  \in [0,1/(M+2)]$ and $\psi_M(t)=(M+2)(1-t)$ if $t  \in [1-1/(M+2),1]$. It is continuous, piecewise linear and such that $0\le \psi_M\le \mathbf{1}_{[0,1]}$, $\psi_M(t)\to_{M\to \infty} \mathbf{1}_{[0,1]}(t)$ for any $t\in \R$. We set
$$\hat{\Gamma}_M(t,s)=\sum_{i=1}^{2^M}  \psi_M\left(\frac{2^M}{T}\left(s-(i-1)\frac{T}{2^M}\right) \right) \frac{2^M}{T}\int_{(i-1)\frac T {2^M}}^{i\frac T {2^M}} \Gamma(t,u) \ud u,$$
and we have 
\begin{align*}&\int_0^T(\tilde{\Gamma}_M(t,s)-\hat{\Gamma}_M(t,s))^2 \ud s \\&= \sum_{i=1}^{2^M}   \left(\frac{2^M}{T}\int_{(i-1)\frac T {2^M}}^{i\frac T {2^M}} \Gamma(t,u) \ud u\right)^2 \int_{(i-1)\frac T {2^M}}^{i\frac T {2^M}} \left(1-\psi_M\left(\frac{2^M}{T}\left(s-(i-1)\frac{T}{2^M}\right) \right)\right)^2 \ud s\\
	&= \int_0^1(1-\psi_M(v))^2 \ud v\sum_{i=1}^M   \frac{2^M}{T}\left(\int_{(i-1)\frac T {2^M}}^{i\frac T {2^M}} \Gamma(t,u) \ud u\right)^2\\
	&\le \frac{2}{M+2}\times \int_0^1 \Gamma(t,u)^2 du \to_{M\to \infty} 0,
\end{align*}
by using Cauchy-Schwarz inequality and that $(1-\psi_M)^2$ vanishes on $[1/(M+2),1-1/(M+2)]$ and is upper bounded by~$1$.

We now show that $\hat{\Gamma}_M$ is continuous on $[0,T]^2$. Since $\psi_M$ is continuous, it is sufficient to check for any $i$ that $t \mapsto \int_{(i-1)\frac T {2^M}}^{i\frac T {2^M}} \Gamma(t,u) \ud u$ is continuous. By Assumption~\ref{ass:conditions1_kernel}, $[0,T]\ni t\mapsto \Gamma(t,\cdot) \in L^2([0,T])$ is continuous, which implies that $\int_0^T|\Gamma(t,u)-\Gamma(s,u)| \ud u \to 0$ as $s\to t$, and thus the continuity of $\hat{\Gamma}_M$ on $[0,T]^2$. 

Therefore, $\Gamma_M:\Delta_T\to \R$ defined by $\Gamma_M(t,s)=\hat{\Gamma}_M(t,s)$ for $(t,s)\in\Delta_T$ is also continuous. For $t \in [0,T]$, we have 
$$\int_0^t (\Gamma(t,s)-\Gamma_M(t,s))^2 \ud s \le 2\int_0^T (\Gamma(t,s)-\tilde{\Gamma}_M(t,s))^2 \ud s  +
2\int_0^T (\tilde{\Gamma}_M(t,s)-\hat{\Gamma}(t,s))^2 \ud s \to 0,$$
We have, for $0\le s\le t \le T$, 
\begin{align*}
	&\int_s^t {\Gamma_M(t,u)^2\,\ud u} + \int_0^s {(\Gamma_M(t,u) - \Gamma_M(s,u))^2\,\ud u} \le \int_s^t {\tilde{\Gamma}_M(t,u)^2\,\ud u} + \int_0^s {(\tilde{\Gamma}_M(t,u) - \tilde{\Gamma}_M(s,u))^2\,\ud u},
\end{align*}
by using that $0\le \psi_M\le 1$. We now assume that $(i-1)\frac{T}{2^M}\le s\le t \le  i \frac{T}{2^M}$ for some $i \in \{1,\dots,2^M\}$. Then, we have 
\begin{align*}
	&\int_s^t {\tilde{\Gamma}_M(t,u)^2\,\ud u} + \int_0^s {(\tilde{\Gamma}_M(t,u) - \tilde{\Gamma}_M(s,u))^2\,\ud u}\\
	&=\left(t-s\right)\left(\frac{2^M}{T} \int_{(i-1)\frac T {2^M}}^{i\frac T {2^M}} \Gamma(t,u)  \ud u\right)^2+ \sum_{j=1}^{i-1} \frac{2^M}{T}\left(\int_{(j-1)\frac T {2^M}}^{j\frac T {2^M}} \Gamma(t,u) -\Gamma(s,u) \ud u\right)^2 \\
	&+ \left(s-(i-1)\frac T {2^M}\right)\left(\frac{2^M}{T} \int_{(i-1)\frac T {2^M}}^{i\frac T {2^M}} \Gamma(t,u) -\Gamma(s,u) \ud u\right)^2 \\
	&\le \frac{2^M}{T}\left(t-s\right)\int_{(i-1)\frac T {2^M}}^{t} \Gamma(t,u)^2  \ud u+ \sum_{j=1}^{i-1} \int_{(j-1)\frac T {2^M}}^{j\frac T {2^M}} \left(\Gamma(t,u) -\Gamma(s,u) \right)^2 \ud u\\
	&+ \frac{2^M}{T} \left(s-(i-1)\frac T {2^M}\right) \int_{(i-1)\frac T {2^M}}^{t} \left(\Gamma(t,u) -\Gamma(s,u)\right)^2 \ud u,
\end{align*}
from Cauchy-Schwarz inequality, and using that $\Gamma(t,u)=0$ if $u>t$. We now use Assumption~\ref{ass:conditions1_kernel} for the first term, and also for the two other terms since they are upper bounded by $\int_0^t(\Gamma(t,u)-\Gamma(s,u))^2 \ud u$ to finally get
\begin{align*}
	\int_s^t {\Gamma_M(t,u)^2\,\ud u} + \int_0^s {(\Gamma_M(t,u) - \Gamma_M(s,u))^2\,\ud u} &\le \frac{2^M}{T}\left(t-s\right) \eta (t-(i-1)\frac T {2^M})^{2\gamma}+ \eta |t-s|^{2\gamma}\\&\le 2 \eta |t-s|^{2\gamma},
\end{align*}
since $t-s$ and $(t-(i-1)\frac T {2^M})$ are smaller than $T/2^M$. 
Now, for $(t,s)\in \Delta_T$, we introduce the grid $(j-1) \frac{T}{2^M}\le s<j\frac{T}{2^M}\le\dots\le (i-1) \frac{T}{2^M}\le t<i\frac{T}{2^M}$ and use the previous inequality together with the subadditivity of $\R_+ \ni x\mapsto x^{2\gamma}$ to get the result. 

Last, we observe that if there exists $\varepsilon>0$ such that $\Gamma(t,s)\ge 0$ for $(t,s) \in \Delta$ with $s\ge t-\varepsilon$, then $\Gamma_M(t,t)\ge 0$ for $M$ such that $T/2^M<\varepsilon$, and therefore $\Gamma_M+\frac{1}
M$ satisfies the assumption and is positive on the diagonal. 
\end{proof}

\bibliographystyle{plainnat}
\bibliography{biblio_Volterra}

\begin{thebibliography}{36}
\providecommand{\natexlab}[1]{#1}
\providecommand{\url}[1]{\texttt{#1}}
\expandafter\ifx\csname urlstyle\endcsname\relax
  \providecommand{\doi}[1]{doi: #1}\else
  \providecommand{\doi}{doi: \begingroup \urlstyle{rm}\Url}\fi

\bibitem[Abi~Jaber(2021)]{AJ21}
Eduardo Abi~Jaber.
\newblock Weak existence and uniqueness for affine stochastic {Volterra} equations with {L1}-kernels.
\newblock \emph{Bernoulli}, 27\penalty0 (3):\penalty0 1583--1615, 2021.

\bibitem[Abi~Jaber(2022)]{abi2022laplace}
Eduardo Abi~Jaber.
\newblock The {L}aplace transform of the integrated {V}olterra {W}ishart process.
\newblock \emph{Mathematical Finance}, 32\penalty0 (1):\penalty0 309--348, 2022.

\bibitem[Abi~Jaber and El~Euch(2019{\natexlab{a}})]{AJEE19a}
Eduardo Abi~Jaber and Omar El~Euch.
\newblock Multifactor approximation of rough volatility models.
\newblock \emph{SIAM J. Financial Math.}, 10\penalty0 (2):\penalty0 309--349, 2019{\natexlab{a}}.

\bibitem[Abi~Jaber and El~Euch(2019{\natexlab{b}})]{AJEE19b}
Eduardo Abi~Jaber and Omar El~Euch.
\newblock Markovian structure of the {Volterra} {Heston} model.
\newblock \emph{Statist. Probab. Lett.}, 149:\penalty0 63--72, 2019{\natexlab{b}}.

\bibitem[Abi~Jaber et~al.(2019{\natexlab{a}})Abi~Jaber, Bouchard, and Illand]{abi2019stochastic}
Eduardo Abi~Jaber, Bruno Bouchard, and Camille Illand.
\newblock Stochastic invariance of closed sets with non-{L}ipschitz coefficients.
\newblock \emph{Stochastic Processes and their Applications}, 129\penalty0 (5):\penalty0 1726--1748, 2019{\natexlab{a}}.

\bibitem[Abi~Jaber et~al.(2019{\natexlab{b}})Abi~Jaber, Larsson, and Pulido]{AJLP19}
Eduardo Abi~Jaber, Martin Larsson, and Sergio Pulido.
\newblock {Affine Volterra processes}.
\newblock \emph{The Annals of Applied Probability}, 29\penalty0 (5):\penalty0 3155--3200, 2019{\natexlab{b}}.

\bibitem[Abi~Jaber et~al.(2021)Abi~Jaber, Cuchiero, Larsson, and Pulido]{AJCLP21}
Eduardo Abi~Jaber, Christa Cuchiero, Martin Larsson, and Sergio Pulido.
\newblock A weak solution theory for stochastic {Volterra} equations of convolution type.
\newblock \emph{The Annals of Applied Probability}, 31\penalty0 (6):\penalty0 2924--2952, 2021.

\bibitem[Abi~Jaber et~al.(2024{\natexlab{a}})Abi~Jaber, Bayer, and Breneis]{abijaber2024state}
Eduardo Abi~Jaber, Christian Bayer, and Simon Breneis.
\newblock State spaces of multifactor approximations of nonnegative volterra processes.
\newblock \emph{arXiv preprint arXiv:2412.17526}, 2024{\natexlab{a}}.

\bibitem[Abi~Jaber et~al.(2024{\natexlab{b}})Abi~Jaber, Cuchiero, Pelizzari, Pulido, and Svaluto-Ferro]{abi2024polynomial}
Eduardo Abi~Jaber, Christa Cuchiero, Luca Pelizzari, Sergio Pulido, and Sara Svaluto-Ferro.
\newblock Polynomial {V}olterra processes.
\newblock \emph{Electronic Journal of Probability}, 29:\penalty0 1--37, 2024{\natexlab{b}}.

\bibitem[Ackermann et~al.(2022)Ackermann, Kruse, and Overbeck]{AKO22}
Julia Ackermann, Thomas Kruse, and Ludger Overbeck.
\newblock Inhomogeneous affine volterra processes.
\newblock \emph{Stochastic Processes and their Applications}, 150:\penalty0 250--279, 2022.

\bibitem[Alfonsi(2025)]{Alfonsi23}
Aur\'elien Alfonsi.
\newblock Nonnegativity preserving convolution kernels. {A}pplication to stochastic {V}olterra equations in closed convex domains and their approximation.
\newblock \emph{Stochastic Processes and their Applications}, 181:\penalty0 Paper No. 104535, 23, 2025.

\bibitem[Alfonsi and Szulda(2025)]{AS24}
Aur\'elien Alfonsi and Guillaume Szulda.
\newblock On non-negative solutions of stochastic {V}olterra equations with jumps and non-{L}ipschitz coefficients.
\newblock \emph{Bernoulli}, 31\penalty0 (4):\penalty0 2890--2915, 2025.

\bibitem[Bardi and Jensen(2002)]{bardi2002geometric}
Martino Bardi and Robert Jensen.
\newblock A geometric characterization of viable sets for controlled degenerate diffusions.
\newblock \emph{Set-Valued Analysis}, 10:\penalty0 129--141, 2002.

\bibitem[Carmona and Coutin(1998)]{carmona1998fractional}
Philippe Carmona and Laure Coutin.
\newblock Fractional brownian motion and the markov property.
\newblock 1998.

\bibitem[Cuchiero and Teichmann(2019)]{CT19}
Christa Cuchiero and Josef Teichmann.
\newblock Markovian lifts of positive semidefinite affine {Volterra}-type processes.
\newblock \emph{Decisions in Economics and Finance}, 42\penalty0 (2):\penalty0 407--448, 2019.

\bibitem[Cuchiero and Teichmann(2020)]{CT20}
Christa Cuchiero and Josef Teichmann.
\newblock Generalized {Feller} processes and {Markovian} lifts of stochastic {Volterra} processes: the affine case.
\newblock \emph{Journal of Evolution Equations}, 20\penalty0 (4):\penalty0 1301--1348, 2020.

\bibitem[Cuchiero et~al.(2011)Cuchiero, Filipovi{\'c}, Mayerhofer, and Teichmann]{cuchiero2011affine}
Christa Cuchiero, Damir Filipovi{\'c}, Eberhard Mayerhofer, and Josef Teichmann.
\newblock Affine processes on positive semidefinite matrices.
\newblock \emph{The Annals of Applied Probability}, 21\penalty0 (2), 2011.

\bibitem[Cuchiero et~al.(2016)Cuchiero, Keller-Ressel, Mayerhofer, and Teichmann]{CKRMT}
Christa Cuchiero, Martin Keller-Ressel, Eberhard Mayerhofer, and Josef Teichmann.
\newblock Affine processes on symmetric cones.
\newblock \emph{J. Theoret. Probab.}, 29\penalty0 (2):\penalty0 359--422, 2016.

\bibitem[Da~Prato and Frankowska(2004)]{da2004invariance}
Giuseppe Da~Prato and H{\'e}lene Frankowska.
\newblock Invariance of stochastic control systems with deterministic arguments.
\newblock \emph{Journal of differential equations}, 200\penalty0 (1):\penalty0 18--52, 2004.

\bibitem[Da~Prato and Frankowska(2007)]{da2007stochastic}
Giuseppe Da~Prato and H{\'e}lene Frankowska.
\newblock Stochastic viability of convex sets.
\newblock \emph{Journal of mathematical analysis and applications}, 333\penalty0 (1):\penalty0 151--163, 2007.

\bibitem[Dawson and Fleischmann(1994)]{dawson1994super}
Donald~A Dawson and Klaus Fleischmann.
\newblock A super-{B}rownian motion with a single point catalyst.
\newblock \emph{Stochastic Processes and their Applications}, 49\penalty0 (1):\penalty0 3--40, 1994.

\bibitem[Doss(1977)]{doss1977liens}
Halim Doss.
\newblock Annales de l'institut henri poincar{\'e}. section b. calcul des probabilit{\'e}s et statistiques.
\newblock \emph{Annales de l'institut Henri Poincar{\'e}. Section B. Calcul des probabilit{\'e}s et statistiques}, 13\penalty0 (2):\penalty0 99--125, 1977.

\bibitem[El~Euch and Rosenbaum(2019)]{EER19}
Omar El~Euch and Mathieu Rosenbaum.
\newblock The characteristic function of rough {Heston} models.
\newblock \emph{Mathematical Finance}, 29\penalty0 (1):\penalty0 3--38, 2019.

\bibitem[Ethier and Kurtz(2009)]{ethier2009markov}
Stewart~N Ethier and Thomas~G Kurtz.
\newblock \emph{Markov processes: characterization and convergence}.
\newblock John Wiley \& Sons, 2009.

\bibitem[Gripenberg et~al.(1990)Gripenberg, Londen, and Staffans]{GLS90}
Gustaf Gripenberg, Stig-Olof Londen, and Olof Staffans.
\newblock \emph{Volterra Integral and Functional Equations}.
\newblock Encyclopedia of Mathematics and its Applications. Cambridge University Press, Cambridge, 1990.

\bibitem[Harms and Stefanovits(2019)]{harms2019affine}
Philipp Harms and David Stefanovits.
\newblock Affine representations of fractional processes with applications in mathematical finance.
\newblock \emph{Stochastic Processes and their Applications}, 129\penalty0 (4):\penalty0 1185--1228, 2019.

\bibitem[Ikeda and Watanabe(1989)]{IW89}
Nobuyuki Ikeda and Shinzo Watanabe.
\newblock \emph{Stochastic {Differential} {Equations} and {Diffusion} {Processes}}.
\newblock North-Holland mathematical library. North-Holland, Amsterdam--Oxford--New York, second edition, 1989.

\bibitem[Jaisson and Rosenbaum(2016)]{jaisson2016rough}
Thibault Jaisson and Mathieu Rosenbaum.
\newblock Rough fractional diffusions as scaling limits of nearly unstable heavy tailed hawkes processes.
\newblock \emph{The Annals of Applied Probability}, 26\penalty0 (5):\penalty0 2860, 2016.

\bibitem[Karatzas and Shreve(1991)]{KS}
Ioannis Karatzas and Steven~E. Shreve.
\newblock \emph{Brownian motion and stochastic calculus}, volume 113 of \emph{Graduate Texts in Mathematics}.
\newblock Springer-Verlag, New York, second edition, 1991.

\bibitem[Kurtz(2014)]{K:14}
Thomas Kurtz.
\newblock Weak and strong solutions of general stochastic models.
\newblock \emph{Electronic Communications in Probability}, 19\penalty0 (58):\penalty0 1--16, 2014.

\bibitem[Kurtz and Protter(1996)]{KP}
Thomas~G. Kurtz and Philip~E. Protter.
\newblock Weak convergence of stochastic integrals and differential equations.
\newblock In \emph{Probabilistic models for nonlinear partial differential equations ({M}ontecatini {T}erme, 1995)}, volume 1627 of \emph{Lecture Notes in Math.}, pages 1--41. Springer, Berlin, 1996.

\bibitem[Mytnik and Salisbury(2015)]{mytnik2015uniqueness}
Leonid Mytnik and Thomas~S Salisbury.
\newblock Uniqueness for {V}olterra-type stochastic integral equations.
\newblock \emph{arXiv preprint arXiv:1502.05513}, 2015.

\bibitem[Pr{\"o}mel and Scheffels(2023)]{PS23Weak}
David~J Pr{\"o}mel and David Scheffels.
\newblock On the existence of weak solutions to stochastic {Volterra} equations.
\newblock \emph{Electronic Commununications in Probability}, 28:\penalty0 1--12, 2023.

\bibitem[Revuz and Yor(2005)]{ReYo}
Daniel Revuz and Marc Yor.
\newblock \emph{Continuous martingales and {Brownian} motion}, volume 293 of \emph{Grundlehren Math. Wiss.}
\newblock Berlin: Springer, 3rd ed., 3rd. corrected printing edition, 2005.

\bibitem[Spreij and Veerman(2012)]{SpVe}
Peter Spreij and Enno Veerman.
\newblock Affine diffusions with non-canonical state space.
\newblock \emph{Stoch. Anal. Appl.}, 30\penalty0 (4):\penalty0 605--641, 2012.

\bibitem[Zhang(2010)]{Zhang10}
Xicheng Zhang.
\newblock Stochastic {Volterra} equations in {Banach} spaces and stochastic partial differential equation.
\newblock \emph{Journal of Functional Analysis}, 258\penalty0 (4):\penalty0 1361--1425, 2010.

\end{thebibliography}

\end{document}